\setlist[itemize,1]{label=\textbullet}
\setlist{nosep}
\newtheorem{thm}{Theorem}
\newtheorem{prop}[thm]{Proposition}
\newtheorem{cor}[thm]{Corollary}
\newtheorem{lemma}[thm]{Lemma}
\newtheorem{remark}[thm]{Remark}
\newtheorem{defi}[thm]{Definition}
\newtheorem*{MQ}{Main question}
\newcommand{\N}{\mathbb{N}} 
\newcommand{\R}{\mathbb{R}} 
\newcommand{\pR}{\mathbb{R}_{>0}} 
\newcommand{\nnR}{\mathbb{R}_{\ge 0}} 
\newcommand{\T}{\mathbb{T}} 
\newcommand{\domain}{\Omega} 
\newcommand{\fsL}{\textnormal{L}} 
\renewcommand{\H}{\textnormal{H}} 
\newcommand{\fsC}{\mathscr{C}} 
\newcommand{\weakto}{\rightharpoonup}
\newcommand{\ep}{\varepsilon}
\newcommand{\csubset}{\subset\joinrel\subset} 
\newcommand{\dd}{\mathrm{d}}
\newcommand{\lap}{\Delta}
\DeclareMathOperator{\divergence}{div}
\DeclareMathOperator{\supp}{supp}
\newcommand{\init}{\mathrm{init}}
\newcommand{\conv}{\star}
\newcommand{\sExt}{\mathcal{E}}
\author{Helge Dietert\footnote{Email: \href{mailto:helge.dietert@imj-prg.fr}{helge.dietert@imj-prg.fr}\newline
    Universit\'e de Paris and Sorbonne Universit\'e, CNRS,
    Institut de Math\'ematiques de Jussieu-Paris Rive Gauche (IMJ-PRG),
    F-75013, Paris, France\newline
    Currently on leave and working at\newline
    Institut f\"ur Mathematik, Universit\"at Leipzig, D-04103 Leipzig, Germany
  } \and Ayman Moussa\footnote{
    Email: \href{mailto:ayman.moussa@sorbonne-universite.fr}{ayman.moussa@sorbonne-universite.fr}\newline
   Sorbonne Université and Université de Paris, CNRS, Laboratoire Jacques-Louis Lions (LJLL), F-75005 Paris, France}}
\title{Persisting entropy structure for nonlocal cross-diffusion systems}
\begin{document}
\maketitle
\begin{abstract}
  For cross-diffusion systems possessing an entropy (i.e.\ a Lyapunov
  functional) we study nonlocal versions and exhibit sufficient
  conditions to ensure that the nonlocal version inherits the entropy
  structure.  These nonlocal systems can be understood as population
  models \emph{per se} or as approximation of the classical ones. With
  the preserved entropy, we can rigorously link the approximating
  nonlocal version to the classical local system. From a modelling
  perspective this gives a way to prove a derivation of the model and
  from a PDE perspective this provides a regularisation scheme to
  prove the existence of solutions. A guiding example is the SKT model
  \cite{shigesada-kawasaki-teremoto-1979-interacting-species}.  In
  this context we answer positively the question raised by
  \textcite{fontbona-meleard-2014-non-local-cross-diffusion} for the
  derivation and thus complete the derivation.
\end{abstract}

\section{Introduction}
\label{sec:introduction}

\subsection{Cross-diffusion systems with entropy structure}
Our starting points are cross-diffusion systems of \(n\) species with
densities \(u=(u_i)_{1\leq i\leq n}\) solving a system
\begin{equation}
  \label{eq:general-local-cross-diffusion}
  \partial_t u_i
  - \divergence \left(
    \sum_{j=1}^{n} a_{ij}(u) \nabla u_j
  \right) = 0,\qquad
  \text{for } i=1,\dots,n,
\end{equation}
on a domain \(\domain\) supplemented with boundary conditions and
initial data \(u^\init\). Here \(a_{ij}\) are given scalar functions
(\(\nnR^n \rightarrow \nnR\)) and the unknowns are the
model densities \(u_i\)'s, which are therefore expected to be
non-negative. The matrix $A(u):=(a_{ij}(u))$ is called the
\emph{diffusion} matrix and is always assumed to be positive
definite. As this work focuses on the entropy structure for the
diffusion, we do not consider here any reaction terms.

\medskip

Without any assumptions on the $a_{ij}$'s, the only estimate that we
have on system \eqref{eq:general-local-cross-diffusion} is the
conservation of the overall mass, \emph{i.e.}
\begin{equation*}
\frac{\dd}{\dd t}  \int_{\domain} u_i = 0,
\end{equation*}
for \(i=1,\dots,n\). Due to the severe non-linearity of the system,
this sole control is not sufficient to obtain the existence of global
solutions. Searching for a Lyapunov functional of the form
\begin{equation}
  \label{eq:general-additive-entropy}
  H(u): = \int_{\domain} \sum_{i=1}^n h_i(u_i),
\end{equation}
where $h_i \in \fsC^0(\nnR) \cap \fsC^2(\pR)$, we find formally
without boundary terms that
\begin{equation*}
  \frac{\dd}{\dd t} H(u)
  = - \int_{\domain}
  \begin{pmatrix}
    \nabla u_1\\
    \vdots \\
    \nabla u_n\\
  \end{pmatrix}
  \cdot
  M(u)
  \begin{pmatrix}
    \nabla u_1\\
    \vdots \\
    \nabla u_n\\
  \end{pmatrix}
,
\end{equation*}
with $M : \pR^n \to \R^{n\times n}$ defined by
\begin{equation}
\label{eq:defM}  M(y) =
  \begin{pmatrix}
    h_{1}''(y_1) & 0 & \hdots & 0 \\
    0 & h_2''(y_2) & \hdots & 0 \\
    \vdots & \vdots & \ddots & \vdots \\
    0 & 0 & \hdots & h_n''(y_n)
  \end{pmatrix}
  \begin{pmatrix}
    a_{11}(y) & a_{12}(y) & \hdots & a_{1n}(y) \\
    a_{21}(y) & a_{22}(y) & \hdots & a_{2n}(y) \\
    \vdots & \vdots & \ddots & \vdots \\
    a_{n1}(y) & a_{n2}(y) & \hdots & a_{nn}(y)
  \end{pmatrix}.
\end{equation}
Hence we have a positive dissipation \(I= - \dd H / \dd t\) if (the
symmetric part of) \(M\) is positive semi-definite. This motivates the
following definition, where the second part quantifies the dissipation.
\begin{defi}[Entropy structure]
  \label{def:entropy-structure}
  We say that the system~\eqref{eq:general-local-cross-diffusion} has
  an \emph{entropy structure} if there exist $n$ functions
  $h_1,\dots,h_n\in\fsC^0(\nnR)\cap\fsC^2(\pR)$ such that the
  corresponding matrix map $M:\pR^n\rightarrow \R^{n\times n}$ defined
  by \eqref{eq:defM} takes its values in the cone of positive definite
  matrices. We say that this entropy structure is \emph{uniform} when
  there exist furthermore $n$ functions
  $\alpha_1,\dots,\alpha_n:\nnR\rightarrow\nnR$ such that for
  all $(z,v) \in \R^n\times \pR^n$ it holds
  \begin{equation}
    \label{eq:quantised-m-positiviy-assumption}
    z^T\cdot M(v) z \ge \sum_{i=1}^{n} \alpha_i(v_i)^2 z_i^2.
  \end{equation}
  For a given entropy structure the functions $h_i$'s are called the
  \emph{entropy densities}, $\alpha_i$'s are the \emph{dissipations}
  and the functional \(H\) defined in
  \eqref{eq:general-additive-entropy} is called the \emph{entropy}.
\end{defi}
\begin{remark}
  From the assumed positive definiteness of the diffusion matrix
  \(A\), it directly follows that for every entropy structure all the
  functions \(h_i\), \(i=1,\dots,n\), are convex.
\end{remark}
\begin{remark}
  For typical examples, as the SKT system \eqref{eq:local-SKT} below,
  the entropy densities \(h_i\) have diverging derivative towards the
  origin so that we define the matrix map \(M\) only for positive
  arguments. In this work we also take \(\nnR\) for the range of the
  densities which is the most common case. In general the entropy
  structure can also be defined for bounded subsets of \(\R\),
  cf.~\cite{juengel-2015-boundedness-by-entropy}.
\end{remark}

Smooth solutions for the system
\eqref{eq:general-local-cross-diffusion} are known to exist, at least
locally in time, thanks to the work of
\textcite{amann-1990-quasilinear-parabolic-systems} which gives also a
criteria of explosion for such solutions. Apart from the very specific
case of triangular system \cite{HoanNguPha}, for global solutions the
current literature allows only weak solutions and relies crucially on
the entropy structure.

For an overview of such cross-diffusion systems we refer to
\textcite{juengel-2015-boundedness-by-entropy}, which gives a list of
examples in the introduction and also uses the quantified condition
\eqref{eq:quantised-m-positiviy-assumption}.  Note that
\cite{juengel-2015-boundedness-by-entropy} allows in principle more
general entropies but, apart from the volume-filling models, all
examples have the additive form \eqref{eq:general-additive-entropy}
required in this work.

\medskip

A guiding example is the SKT system with
densities \(u_1\) and \(u_2\)
\begin{equation}
  \label{eq:local-SKT}
  \left\{
    \begin{lgathered}
      \partial_t u_1 = \lap\big( (d_1+d_{12}u_2) u_1\big), \\
      \partial_t u_2 = \lap\big( (d_2+d_{21}u_1) u_2\big)
    \end{lgathered}
  \right.
\end{equation}
with parameters \(d_1,d_2,d_{12},d_{21} \ge 0\). This system has been
introduced by Shigesada, Kawasaki and
Teramoto~\cite{shigesada-kawasaki-teremoto-1979-interacting-species}. Writing
the system in divergence form
\eqref{eq:general-local-cross-diffusion}, the matrix
$(a_{ij}(u))_{ij}$ reads
\begin{align*}
  \begin{pmatrix}
    d_1+d_{12}u_2 & d_{12} u_1\\
    d_{21} u_2 &     d_2+d_{21}u_1
    \end{pmatrix}.
\end{align*}
For non-negative solutions this matrix has non-negative trace and
determinant. As remarked by \textcite{chenjun} the following entropy
allows to symmetrize the system
\begin{equation}
  \label{eq:two-species-entropy}
  H(u_1,u_2) := \int_{\T^d} \big( h_1(u_1) + h_2(u_2) \big),
\end{equation}
with
\begin{align}\label{eq:dens-ent-skt}
  h_1(z)=d_{21}\psi(z), \quad h_2(z):=d_{12}\psi(z), \quad \psi(z)=z\log(z)-z+1.
  \end{align}
Indeed, one checks that
\begin{align*}
  M(u_1,u_2) &=
  \begin{pmatrix}
    h_1''(u_1) & 0\\
    0 & h_2''(u_2)
  \end{pmatrix}
  \begin{pmatrix}
    d_1+d_{12}u_2 & d_{12} u_1\\
    d_{21} u_2 &     d_2+d_{21}u_1
    \end{pmatrix}
  = d_{12} d_{21}
  \begin{pmatrix}
    \star & 1 \\
    1& \star
  \end{pmatrix},
\end{align*}
so that $M$ is symmetric and still has non-negative determinant and
trace. Thus \(M\) is positive semi-definite and forms with \(H\) an
entropy structure again under the necessary condition that the solution is non-negative.

\medskip

It is also known (see \cite{juengel-2015-boundedness-by-entropy,
  desvillettes-lepoutre-moussa-trescases-2015-entropic-structure,
  lepoutre-moussa-2017-entropic-structure-duality-multiple-species}
for instance) that the previous entropy structure of the additive form
\eqref{eq:general-additive-entropy} can be found for substantial
generalization of \eqref{eq:local-SKT} in the following general class
of cross-diffusion systems
\begin{equation}
  \label{eq:local-SKT-rates-mu-kappa}
  \left\{
    \begin{lgathered}
      \partial_t u_1 = \lap\big( \mu_1(u_1,u_2)\, u_1\big), \\
      \partial_t u_2 = \lap\big( \mu_2(u_1,u_2)\, u_2\big),
    \end{lgathered}
  \right.
\end{equation}
where the non-linear functions $\mu_1$ and $\mu_2$ are assumed
$\fsC^0(\R^2_{\geq 0})\cap \fsC^1(\pR^2)$ so that
\eqref{eq:local-SKT-rates-mu-kappa} can be written in divergence form
\eqref{eq:general-local-cross-diffusion} in order for the entropy
structure to makes sense. For the analysis of the PDE the difficulty
comes from the cross-diffusion effect so that we will focus on the
case without self-diffusion (imposing that $\mu_i$ does not depend on
\(u_i\))
\begin{equation}
  \label{eq:local-SKT-rates-mu-kappa:noself}
  \left\{
    \begin{lgathered}
      \partial_t u_1 = \lap\big( \mu_1(u_2)\, u_1\big), \\
      \partial_t u_2 = \lap\big( \mu_2(u_1)\, u_2\big).
    \end{lgathered}
  \right.
\end{equation}

The contribution of this paper is a constructive answer to the
following question.
\begin{MQ}
  For a cross-diffusion system with an entropy \(H\) of the form
  \eqref{eq:general-additive-entropy}, does there exists a spatial
  mollification of the diffusion such that the mollified system still
  has an entropy?
\end{MQ}

These mollified systems are called \emph{nonlocal} because the
diffusion rate of one species at a given point $x$ does not depend
anymore solely on the population density at this place, but on a
\emph{space average} around it.

We provide a family of spatial mollifications of the cross-diffusion
keeping the entropy structure keeping the entropy structure, where our
intuition takes its origin from the article
\cite{daus-desvillettes-dietert-2019-entropic-structure} in which the
first author of the current article exhibited an entropy structure for
the SKT systems under a spatial discretization.  To the best of our
knowledge, the current literature does not offer any prior example of
persisting entropy structure for a nonlocal cross-diffusion systems.

\paragraph{Approximation results}

The usage of a spatial mollification was first proposed by
\textcite{ben-lep-mar-per,lepoutre-pierre-rolland-2012-relaxed-cross-diffusion}
where no rigorous link with the original model was established.

Having the entropy structure at hand, we can rigorously perform the
limit from the mollified nonlocal system to the original local
system. This gives immediately a new proof of existence of global weak
entropy solutions.

Such an existence result is non-trivial because an adequate
approximation scheme has to create non-negative solutions and to keep
the entropy structure, as it has been done before by an entropic
change of variable \cite{juengel-2015-boundedness-by-entropy} or with
a semi-discrete scheme
\cite{desvillettes-lepoutre-moussa-trescases-2015-entropic-structure}.

\paragraph{Derivation from particle models}

The main motivation comes from the derivation of many particle models
as a mean-field limit. The starting point is by
\textcite{fontbona-meleard-2014-non-local-cross-diffusion} who
performed a stochastic derivation of a regularised cross-diffusion
system with a nonlocal spatial regularisation. Their aim was not to
produce an adequate approximation scheme but to derive the SKT system
from a particle model. However, they could not handle the last step of
the derivation and they explicitly raised the question, whether it is
possible to find the classical local cross-diffusion system in the
limit of small regularisation.

A partial answer in this direction is given in
\cite{moussa-2020-triangular} in the special case of triangular
diffusion coefficients. The recent work by
\textcite{chen-daus-holzinger-juengel-2020-preprint-derivation} uses
the same approximation by spatial mollifiers and manage to prove
rigorously the limit of small regularization \emph{and} large
population at the same time. In the aspect of taking both asymptotic
limits at once, the analysis of
\cite{chen-daus-holzinger-juengel-2020-preprint-derivation} goes
beyond the program of
\textcite{fontbona-meleard-2014-non-local-cross-diffusion}. However,
for proving the uniform stability of the mollified systems,
\textcite{chen-daus-holzinger-juengel-2020-preprint-derivation} impose
the assumption of small cross-diffusion coefficients so that the
cross-diffusion terms can be handled perturbatively. Hence the result
of \cite{chen-daus-holzinger-juengel-2020-preprint-derivation} does
\emph{not} cover the full SKT system \eqref{eq:local-SKT}.

In general, our proposed regularisation scheme is different to the one
used in
\cite{fontbona-meleard-2014-non-local-cross-diffusion,chen-daus-holzinger-juengel-2020-preprint-derivation}
but agrees on the important example of a linear rate SKT system
\eqref{eq:local-SKT}. Hence we provide, to the best of our knowledge,
the first complete derivation of this popular cross-diffusion model \emph{via} non-local approximation.

For completeness, we note that other approaches for the derivation of
cross-diffusion systems are fast reaction asymptotics
\cite{iidadiff,ariane,DDJ} or spatial discretisations (without
convolution). The later method was formally proposed in
\cite{daus-desvillettes-dietert-2019-entropic-structure} and recently
revisited rigorously in \cite{ban-mou-mu}.

\paragraph{Plan}

In the following Subsections~\ref{sec:intro:torus}
and~\ref{sec:intro:general-regularisation-scheme}, we introduce our
nonlocal mollifications keeping the entropy structure. We then state
the existence and convergence results for the mollified systems in the
following Subsection~\ref{sec:intro:results}. In the remainder of the
paper these results are then proved.

As we focus on the approximation scheme, we will show existence of
solutions of the regularised systems by PDE techniques, where we
already see the effectiveness of the regularisation. We expect that
the stochastic derivation can be adapted; but leave a general
derivation from particle models for future work.

Another future direction is the study of the gradient flow
structure. Formally, the original local system often has a gradient
flow structure which in most studies is only used in the form of the
dissipation inequality (an exception is
\textcite{zinsl-matthes-2015-transport-distances}). Having found a
regularisation, we plan for future work to investigate the gradient
flow formulation of the nonlocal system and the limit towards the
local system. Such limits of gradient flows are an active field and we
only mention
\cite{serfaty-2011-gamma-convergence,braides-2014-local-minimization,mielke-2016-evolutionary-gamma-convergence}
as starting points.

\subsection{Regularisation on the torus}
\label{sec:intro:torus}

The starting point was
\cite{daus-desvillettes-dietert-2019-entropic-structure}, where the
entropy structure was understood for the linear rates SKT model in a
spatial discretisation. In this paper the intuition is to relate the
entropy structure to the reversibility of a Markov chain modelling an
\(N\)-particle system whose mean-field limit converges (formally) to
the spatially discrete system.

Briefly, the idea in
\cite{daus-desvillettes-dietert-2019-entropic-structure} is that, on a
particle model with discrete space variable, the entropy structure is
obtained by imposing that a pair of particles is jumping together with
a suitable rate. Trying to use this idea for a nonlocal approximation,
we intuitively want to make pairs of particle with a given distance
jump together. In order to identify the pairs, we therefore take the
convolution reflected between the two species.

For \(\domain = \T^d\) and a convolution kernel
\(\rho : \T^d \rightarrow \nnR\) this motivates the following
regularisation of \eqref{eq:local-SKT}
\begin{equation}
  \label{eq:regularised-SKT-simple}
  \left\{
    \begin{lgathered}
      \partial_t u_1 = \lap\big( (d_1+d_{12}\,u_2 \conv \rho) u_1\big) \\
      \partial_t u_2 = \lap\big( (d_2+d_{21}\,u_1 \conv \check{\rho}) u_2\big)
    \end{lgathered}
  \right.
\end{equation}
where \(\check{\rho}\) is the reflected convolution kernel, i.e.
\begin{equation*}
  \check{\rho}(y) = \rho(-y).
\end{equation*}
The key-observation is that, for any
function $\varphi$ one has formally
\begin{align*}
  \int_{\T^d} \varphi(u_2) \lap\big((u_1 \conv \check{\rho}) u_2)\big)
  &=\int_{\T^d} \varphi(u_2) \big\{(\Delta u_1\star \check{\rho})u_2 + 2 \nabla (u_1\star \check{\rho})\cdot \nabla u_2 + (u_1 \star\check{\rho})\Delta u_2 \big\}\\
  &=\int_{\T^d} \big[\rho\star(\varphi(u_2)u_2)\big] \Delta u_1 + 2 \big[\rho\star(\varphi(u_2)\nabla u_2)\big]\cdot \nabla u_1  + \big[\rho\star(\varphi(u_2)\Delta u_2)\big]u_1 \\
  &=\int_{\T^d}\int_{\T^d}\rho(y)\varphi(u_2(x-y))\lap_x( u_2(x-y)u_1(x))\,\dd y\,\dd x,
\end{align*}
which, using the translation operator $\tau_y u_2 = u_2(\cdot-y)$, summarises in
\begin{align*}
  \int_{\T^d} \varphi(u_2) \lap\big((u_1 \conv \check{\rho}) u_2)\big)
  = \int_{\T^d}\rho(y)\left\{\int_{\T^d} \varphi(\tau_y u_2) \lap\big( u_1 \tau_y u_2)\big)\right\}\,\dd y.
\end{align*}
In particular, for $H$ of the form \eqref{eq:general-additive-entropy} we have
\begin{equation*}
  \begin{split}
    - \frac{\dd}{\dd t}H(u_1,u_2)
    = \int_{\T^d} \rho(y) \left\{\int_{\T^d}
    \begin{pmatrix}
      \nabla u_1 \\
       \nabla \tau_y u_2
    \end{pmatrix}
    \cdot
    M\Big(u_1 ,\tau_y u_2\Big)
    \begin{pmatrix}
      \nabla u_1 \\
       \nabla \tau_y u_2
    \end{pmatrix}
\right\} \,\dd y,
  \end{split}
\end{equation*}
where \(M\) is the same matrix associated to $H$, as in the local
case. In particular, since the kernel is non-negative, the entropy
structure of the local case persists in the nonlocal system in the
sense that $H$ still defines a Lyapunov functional. The previous
computation can be adapted to the generalizations of the SKT system
\eqref{eq:local-SKT-rates-mu-kappa} with the following caution: the
spatial regularisation has to be applied after the nonlinearity,
without affecting the self-diffusion. We have more precisely the
following proposition.
\begin{prop}\label{prop:formal}
  Consider $\mu_1,\mu_2\in\fsC^0(\R^2_{\geq 0})\cap \fsC^1(\pR^2)$ and the corresponding system
  \eqref{eq:local-SKT-rates-mu-kappa}. If this system has an entropy
  $H$, then for any non-negative kernel $\rho$ of integral $1$, any
  solution of the following non-local system ($\tau_y$ is the
  translation operator)
  \begin{equation}
    \label{eq:regularised-general-skt-torus}
    \left\{
      \begin{lgathered}
        \partial_t u_1
        - \lap\left[ \int_{\T^d} \rho(y)\, \mu_1\big(u_1,\tau_y u_2)\big)\, \dd
          y\, u_1 \right]
        =0,
        \\
        \partial_t u_2
        - \lap\left[ \int_{\T^d} \check{\rho}(y)\, \mu_2\big(\tau_y u_1,u_2)\big) \,\dd y\, u_2  \right]
        =0
      \end{lgathered}
    \right.
  \end{equation}
  satisfies formally
  \begin{align*}
    \frac{\dd}{\dd t} H(u_1(t),u_2(t)) \leq 0.
  \end{align*}
  If the entropy structure of the system
  \eqref{eq:local-SKT-rates-mu-kappa} is furthermore assumed uniform
  with dissipation $\alpha_1$ and $\alpha_2$, then we have formally
  \begin{align*}
    \frac{\dd}{\dd t} H(u_1(t),u_2(t)) + D(t) \leq 0,
  \end{align*}
  where
  \begin{align*}
    D(t):= \int_{\T^d} \alpha_1(u_1(t))^2 |\nabla u_1(t)|^2 + \int_{\T^d} \alpha_2(u_2(t))^2 |\nabla u_2(t)|^2.
  \end{align*}
\end{prop}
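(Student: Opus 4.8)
The plan is to differentiate the entropy along a smooth positive solution, substitute the two equations of \eqref{eq:regularised-general-skt-torus}, and reorganise the result into a $\rho$-average of the \emph{local} dissipation identity evaluated at the frozen pair $(u_1,\tau_y u_2)$. Abbreviating $\bar\mu_1(x)=\int_{\T^d}\rho(y)\,\mu_1(u_1(x),\tau_y u_2(x))\,\dd y$ and likewise $\bar\mu_2$ with the reflected kernel $\check{\rho}$, one has $\tfrac{\dd}{\dd t}H=\int_{\T^d}h_1'(u_1)\lap(\bar\mu_1 u_1)+\int_{\T^d}h_2'(u_2)\lap(\bar\mu_2 u_2)$. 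Since $\lap$ commutes with the $y$-integration and $\rho$ is independent of $x$, Fubini pulls the average outside, so the first term becomes $\int_{\T^d}\rho(y)\big\{\int_{\T^d}h_1'(u_1)\lap\big(\mu_1(u_1,\tau_y u_2)\,u_1\big)\big\}\,\dd y$, an average over integrals involving only the pair $(u_1,\tau_y u_2)$.

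The key step is to coerce the second term into the same shape. Writing it as $\int_{\T^d}\check{\rho}(y)\big\{\int_{\T^d}h_2'(u_2)\lap\big(\mu_2(\tau_y u_1,u_2)\,u_2\big)\big\}\,\dd y$, I would use $\check{\rho}(y)=\rho(-y)$ to substitute $y\mapsto-y$, then translate the spatial variable by $-y$; both operations are licit on $\T^d$ because the Laplacian commutes with translations and the measure is translation invariant. The term then reads $\int_{\T^d}\rho(y)\big\{\int_{\T^d}h_2'(\tau_y u_2)\lap\big(\mu_2(u_1,\tau_y u_2)\,\tau_y u_2\big)\big\}\,\dd y$, i.e.\ an average over the \emph{same} frozen pair $(u_1,\tau_y u_2)$. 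This alignment is the conceptual heart of the statement and the exact reason the reflected kernel was built into \eqref{eq:regularised-general-skt-torus}: it makes the two species interact at a common relative shift $y$. This change-of-variables bookkeeping is the only real obstacle; all remaining manipulations are formal and justified by the assumed smoothness together with the absence of boundary on $\T^d$.

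Summing the two contributions, for each fixed $y$ the inner bracket is precisely the local entropy-production integral of \eqref{eq:local-SKT-rates-mu-kappa} for $(w_1,w_2)=(u_1,\tau_y u_2)$. As in the formal computation of the introduction, integrating by parts once in each term and applying the chain rule to pass to the divergence form turns it into $-\int_{\T^d}Z^T M(w_1,w_2)\,Z$, with $Z=(\nabla w_1,\nabla w_2)^T$ and $M$ the matrix \eqref{eq:defM} attached to $H$; this is a purely algebraic consequence of the entropy structure and needs no equation for $(w_1,w_2)$. Positive definiteness of $M$ makes the bracket nonpositive for every $y$, and since $\rho\ge0$ the $y$-average keeps the sign, giving $\tfrac{\dd}{\dd t}H\le0$. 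For the uniform case I would instead apply \eqref{eq:quantised-m-positiviy-assumption} with $z=Z$ and $v=(u_1,\tau_y u_2)$, bounding the bracket below by $\int_{\T^d}[\alpha_1(u_1)^2|\nabla u_1|^2+\alpha_2(\tau_y u_2)^2|\nabla\tau_y u_2|^2]$: the first piece is $y$-independent and, as $\int_{\T^d}\rho=1$, averages to $\int_{\T^d}\alpha_1(u_1)^2|\nabla u_1|^2$, while the second equals $\int_{\T^d}\alpha_2(u_2)^2|\nabla u_2|^2$ for every $y$ by translation invariance, hence averages to the same quantity. Together these reconstruct $D(t)$ and yield $\tfrac{\dd}{\dd t}H+D\le0$.
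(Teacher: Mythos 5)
Your proof is correct and takes essentially the same route as the paper's: your two-step substitution (reflect $y\mapsto -y$ using $\check{\rho}(y)=\rho(-y)$, then translate the spatial variable by $-y$) is exactly the paper's single change of variables $(x,y)\mapsto(z-w,-w)$, after which both contributions are collected into the quadratic form with the matrix $M$ of \eqref{eq:defM} evaluated at the frozen pair $(u_1,\tau_y u_2)$. Your explicit bookkeeping for the uniform case (translation invariance of $\int_{\T^d}\alpha_2(\tau_y u_2)^2|\nabla \tau_y u_2|^2$ and the normalisation $\int_{\T^d}\rho=1$) merely spells out what the paper compresses into its final sentence.
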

\begin{proof}
  Denoting by $h_1$ and $h_2$ the entropy densities, we find by
  multiplying the first equation of
  \eqref{eq:regularised-general-skt-torus} by $h_1'(u_1)$ and
  integrating over $\T^d$ that
  \begin{align*}
    \frac{\dd}{\dd t} \int_{\T^d} h_1(u_1)\, \dd x
    &= -\int_{\T^d} h_1''(u_1)\nabla u_1 \cdot
      \left\{\int_{\T^d} \rho(y)\nabla( \mu_1(u_1,\tau_y u_2)u_1)\,\dd y\right\}
     \dd x\\
    &= -\int_{\T^d} \rho(y)
      \left\{ \int_{\T^d} h_1''(u_1)\nabla u_1
      \cdot \nabla(\mu_1(u_1,\tau_y u_2)u_1 )\, \dd x\right\}
      \,\dd y,
  \end{align*}
  where $\tau_y$ is the translation operator and $\nabla$ acts on the
  $x$ (not noted) variable only. We have a similar formula for the
  second equation, that is
  \begin{align*}
    \frac{\dd}{\dd t} \int_{\T^d} h_2(u_2)\, \dd x
    = -\int_{\T^d} \check{\rho}(y)
    \left\{ \int_{\T^d} h_1''(u_2)\nabla u_2
    \cdot \nabla( \mu_2(\tau_y u_1,u_2)u_2 )\, \dd x \right\}
    \,\dd y.
  \end{align*}
  Intuitively speaking, we want to collect the pairs \(u_1(x)\) and
  \(u_2(x-y)\) in both expressions. This motivates in the double
  integral in the variables \(x,y\) of the last r.h.s. the change of
  variable $(x,y)\mapsto (z-w,-w)$. Using that the translation
  commutes with differential operators, we find
  \begin{align*}
    \frac{\dd}{\dd t} \int_{\T^d} h_2(u_2)
    = -\int_{\T^d} \check{\rho}(-w)
    \left\{ \int_{\T^d} h_2''(\tau_w u_2)\nabla \tau_w u_2
    \cdot \nabla( \mu_1( u_1,\tau_w u_2)\tau_w u_2) \dd z\right\}
    \,\dd w.
  \end{align*}
  Since $\check{\rho}(-w)=\rho(w)$, renaming the variables as before,
  we collect both contributions as
  \begin{align*}
    \frac{\dd}{\dd t} H(u_1(t),u_2(t))= -\int_{\T^d} \rho(y)
    \left\{\int_{\T^d}
    \begin{pmatrix}
      \nabla u_1 \\
      \nabla \tau_y u_2
    \end{pmatrix}
    \cdot
    M\Big(u_1 ,\tau_y u_2\Big)
    \begin{pmatrix}
      \nabla u_1 \\
      \nabla \tau_y u_2
    \end{pmatrix}
    \right\} \,\dd y,
  \end{align*}
  where $M$ is given by \eqref{eq:defM}, the coefficients of the
  matrix $A$ being the one used to write
  \eqref{eq:local-SKT-rates-mu-kappa} in divergence form
  \eqref{eq:general-local-cross-diffusion}. The fact that $H$ is a
  Lyapunov functional and the precised dissipation in case of uniform
  entropy follow (for the latter, use the normalisation of $\rho$).
\end{proof}
If there is no self diffusion in the generalized system
\eqref{eq:local-SKT-rates-mu-kappa} like in
\eqref{eq:local-SKT-rates-mu-kappa:noself}, the nonlocal system
\eqref{eq:regularised-general-skt-torus} becomes simply
\begin{equation}
  \label{eq:regularised-general-skt-torus:noself}
  \left\{
    \begin{lgathered}
      \partial_t u_1 = \lap\big( (\mu_1(u_2)\star\rho) u_1\big), \\
      \partial_t u_2 = \lap\big( (\mu_2(u_1)\star\check{\rho}) u_2\big).
    \end{lgathered}
  \right.
\end{equation}
Thus, compared to
\textcite{fontbona-meleard-2014-non-local-cross-diffusion}, the
spatial regularisation is applied after the nonlinearity, while they
do it the opposite way in their stochastic derivation. However, in the
fundamental case of the (linear) SKT system \eqref{eq:local-SKT}, we
get the same system.

\medskip

For these systems with the Laplace structure another important role is
played by the duality estimates, see
\cite{desvillettes-lepoutre-moussa-trescases-2015-entropic-structure,
  lepoutre-moussa-2017-entropic-structure-duality-multiple-species,
  moussa-2020-triangular}. An advantage of the previous scheme is that
these duality estimates naturally continue to work in the nonlocal
versions.

\begin{remark}
  In the regularisation \eqref{eq:regularised-general-skt-torus} the
  rate \((\mu_i)_{i=1,2}\) is averaged with respect to the
  cross-diffusion influence but a possible nonlinear self-diffusion is
  not regularised. However, a nonlinear self-diffusion tends to
  improve the entropy-dissipation estimates and we thus focus on cases
  without self-diffusion.

  For stochastic derivations it is, nevertheless, interesting to also
  regularise the self-diffusion.  In a general setting this destroys
  the entropy structure and we need a compatibility with the entropy
  structure. For such a regularisation consider a symmetric kernel
  \(\sigma\), i.e.\ \(\check{\sigma}=\sigma\), and assume that
  \eqref{eq:local-SKT-rates-mu-kappa} can be written as
  \begin{equation*}
    \left\{
      \begin{lgathered}
        \partial_t u_1 = \lap\big( (\mu_1(u_2)+\kappa_1(u_1)) \, u_1\big), \\
        \partial_t u_2 = \lap\big( (\mu_2(u_1)+\kappa_2(u_2)) \, u_2\big),
      \end{lgathered}
    \right.
  \end{equation*}
  where the system without the \(\kappa\) has an entropy structure
  with an entropy \(H\) consisting of \(h_1\) and \(h_2\) and matrix
  map \(M\). We then propose the regularisation
  \begin{equation*}
    \left\{
      \begin{lgathered}
        \partial_t u_1(x)
        = \lap_x\left[
          \left(\int_{y\in\T^d} \rho(x{-}y)\, \mu_1\big(u_2(y)\big)
            \, \dd y
            +
            \int_{y\in\T^d} \sigma(x{-}y)\, \kappa_1\big(u_1(y)\big)\, \dd y
          \right)
          u_1(x) \right]
        \\
        \partial_t u_2(y)
        = \lap_y\left[
          \left(\int_{x\in\T^d} \rho(x{-}y)\, \mu_2\big(u_1(x)\big)
            \, \dd x
            +
            \int_{x\in\T^d} \sigma(x{-}y)\, \kappa_2\big(u_2(x)\big)\, \dd x
          \right)
          u_2(y) \right]
      \end{lgathered}
    \right.
  \end{equation*}
  For the dissipation we then find
  \begin{equation*}
    - \frac{\dd}{\dd t} H(u_1,u_2)
    = I_\mu + I_1 + I_2
  \end{equation*}
  where \(I_\mu\) is the dissipation with \(\kappa_1=\kappa_2=0\) and
  thus has a good sign. The new terms are after using symmetrisation
  \(\check{\sigma}=\sigma\)
  \begin{equation*}
    I_i = \frac 12
    \int_{x\in\domain} \int_{y\in\R^d} \sigma(y)
    \begin{pmatrix}
      \nabla u_i(x) \\ \nabla u_i(x-y)
    \end{pmatrix}
    \cdot
    N_i
    \begin{pmatrix}
      \nabla u_i(x) \\ \nabla u_i(x-y)
    \end{pmatrix}
    \dd y\, \dd x
  \end{equation*}
  with
  \begin{equation*}
    N_i =
    \begin{pmatrix}
      h_i''(u_i(x)) & 0 \\
      0 & h_i''(u_i(x-y))
    \end{pmatrix}
    \begin{pmatrix}
      \kappa_1(u_i(x-y))
      & u_i(x) \kappa_1'(u_i(x-y)) \\
      u_i(x-y) \kappa_1'(u_i(x))
      & \kappa_1(u_i(x))
    \end{pmatrix}
  \end{equation*}
  for \(i=1,2\).

  Hence \(H\) is still an entropy if \((N_i)_{i=1,2}\) are always
  positive semi-definite which gives an extra condition on the
  system. We note, however, that for the studied SKT
  system~\eqref{eq:local-SKT} this condition is always satisfied
  under the natural assumption that effect on the other species is of
  the same form as the self-diffusion effect, i.e. that it takes the
  form
  \begin{equation*}
    \left\{
      \begin{lgathered}
        \partial_t u_1 = \lap\big( (d_1+d_{12}u_2^{\alpha} + d_{11} u_1^{\beta}) u_1\big), \\
        \partial_t u_2 = \lap\big( (d_2+d_{21}u_1^{\beta} + d_{22} u_2^{\alpha}) u_2\big),
      \end{lgathered}
    \right.
  \end{equation*}
  for constants
  \(d_1,d_2,d_{11},d_{12},d_{21},d_{22},\alpha,\beta \in \pR\) with
  \(\alpha\beta \le 1\) (see, e.g.,
  \cite{desvillettes-lepoutre-moussa-trescases-2015-entropic-structure}
  for the discussion of the local case).
\end{remark}

\begin{remark}
  Consider the SKT system on a regular bounded set $\domain \in \R^d$
  with \emph{constant} Dirichlet boundary conditions\footnote{We need to assume constant boundary
    data in order to avoid boundary terms in the entropy estimate.};
  that is, for $x\in\partial\domain$ we impose for all times
  \(t\in\R_+\) $(u_1,u_2)(t,x)=(b_1,b_2) \in \pR^2$ and we will now
  explain how the previous regularisation scheme on the torus can be
  used to design an approximation procedure for this type of boundary
  conditions by a penalisation method.

  First note is that, whenever the system has an entropy structure,
  each elementary functions $h_i:\nnR\to\R$ can be exchanged with
  $z\mapsto h_i(z)+\ell_i(z)$ for any affine functions $\ell_i$,
  without changing the entropy estimate (which solely relies on second
  derivatives). W.l.o.g.\ we can therefore assume
  $h_i(b_i)=h_i'(b_i)=0$. By convexity $h_i$ reaches therefore its
  minimum at $b_i$.

  Now for an approximating system take $N$ large enough such that
  $\overline{\Omega}\csubset (-N,N)^d$ and identify the hypercube
  $[-N,N]^d$ with the flat torus $\mathbb{T}_N^d:=(\R/2N\mathbb{Z})^d$
  and consider for \(\ep > 0\) the approximating system
  \begin{align*}
    \partial_t u_{1,\ep} - \lap((d_1+d_{12}
    u_{2,\ep}\star\rho_\ep)u_{1,\ep})
    &=- \frac{1}{\ep}(u_{1,\ep}-b_1)\mathbf{1}_{\T^d_N\setminus\domain},\\
    \partial_t u_{2,\ep} - \Delta((d_2+d_{21}
    u_{1,\ep}\star\check{\rho}_\ep)u_{2,\ep})
    &= - \frac{1}{\ep}(u_{2,\ep}-b_2)\mathbf{1}_{\T^d_N\setminus\domain}.
  \end{align*}
  The key idea is that $h_i'(z)(z-b_i)$ is a non-negative function
  vanishing at only one point: multiplying respectively the equations
  by $h_i'(u_{i,\ep})$, we recover an entropy estimate with an extra
  penalisation term (which has the good sign). At the limit
  $\ep\rightarrow 0$ the penalisation forces therefore $u_i=b_i$
  outside $\domain$ and thus on $\partial\domain$.
\end{remark}

\subsection{General regularisation scheme}
\label{sec:intro:general-regularisation-scheme}

In the previous subsection we considered the special case of two
species on the torus. In this subsection we will generalise the
regularisation scheme to several species and general domains
\(\domain\). Here the boundary implies that
the specific Laplace structure as in
\eqref{eq:local-SKT-rates-mu-kappa} is not preserved and we have the
general divergence structure as in
\eqref{eq:general-local-cross-diffusion}, see
\cref{rem:laplace-general-regularisation}.

For the two densities case on the torus, we used the convolution in
order to define how a pair is interacting in the cross-diffusion. In
the general case of \(n\) densities on a domain \(\domain\), the
suitable generalisation is a kernel \(K: \domain^n \to \R_{\ge 0}\)
between all densities and the intuitive idea is that the
cross-diffusion between the densities
\(u_1(x_1),u_2(x_2),\dots, u_n(x_n)\) at positions
\(x_1,x_2,\dots,x_n \in \domain\) happens with the intensity
\(K(x_1,x_2,\dots,x_n)\). The idea of using a kernel on a bounded
domain has been proposed in
\cite{lepoutre-pierre-rolland-2012-relaxed-cross-diffusion}, where $K$
is the fundamental solution the (Neumann) operator
$\textnormal{Id}-\delta \lap$ with \(0 < \delta \ll 1\). However, the
authors kept the Laplace structure and applied the regularisation
before the nonlinearity so that the entropy structure was lost, see
\cref{rem:laplace-general-regularisation} below.

At the boundary such a general tuple cannot diffuse freely if we
impose no-flux boundary conditions. Hence in order to rule out
boundary terms we further assume that
\begin{equation}
  \label{eq:assumption-k-boundary}
  K(x_1,\dots,x_n) = 0
  \qquad\text{if }
  x_i \in \partial \domain
  \text{ for }
  i=1,\dots,n.
\end{equation}

A family of kernel \(K^\epsilon\) for \(\epsilon > 0\) then yields an
approximation of the local system if the kernel is concentrating on
the diagonal as \(\epsilon \to 0\), i.e.\ for a species
\(i=1,\dots,n\), a point \(x_i \in \domain\) and a sufficiently nice
test function \(\phi: \Omega^n \to \R\) it holds that
\begin{equation*}
  \prod_{j\not=i} \int_{x_j \in \Omega} \dd x_{j}
  K^\epsilon(x_1,\dots,x_n)\, \phi(x_1,\dots,x_n)
  \to \phi(x_i,\dots,x_i) \qquad
  \text{ as } \epsilon \to 0,
\end{equation*}
where we introduced the notation
\(\prod_{j\not=i} \int_{x_j \in \Omega} \dd x_{j}\) to denote the
repeated integral over all coordinates \(x_j\) with \(j\not = i\),
i.e.
\begin{equation*}
  \prod_{j\not=i} \int_{x_j \in \Omega} \dd x_{j}
  :=
  \int_{x_1 \in \Omega} \dd x_1 \dots
  \int_{x_{i-1} \in \Omega} \dd x_{i-1}
  \int_{x_{i+1} \in \Omega} \dd x_{i+1}
  \dots
  \int_{x_{n} \in \Omega} \dd x_{n}.
\end{equation*}

A natural candidate of such kernels \(K^\epsilon\) is a smoothing of
\begin{equation*}
  K^\epsilon(x_1,\dots,x_n) = C_{\epsilon} 1_{|x_i-x_j|\le \epsilon, i,j=1,\dots,n}
\end{equation*}
with a cutoff towards the boundary and a suitable constant \(C_{\epsilon}\).

We can now state our proposed general regularisation.
\begin{prop}\label{thm:formal-general}
  Let \(n \in \N\) be the number of densities and assume rates
  \((a_{ij})_{i,j=1,\dots,n}\) such that the local system
  \eqref{eq:general-local-cross-diffusion} has an entropy \(H\).

  For a constant \(\epsilon > 0\), a domain \(\domain \in \R^d\) and a
  kernel \(K : \domain^n \to \R_{\ge 0}\) satisfying
  \eqref{eq:assumption-k-boundary}, suppose of densities
  \(u_1,\dots,u_n : \domain \to \R_{\ge 0}\) evolving in time \(t\) by the
  nonlocal system (\(i=1,\dots,n\))
  \begin{equation}
    \label{eq:general-regularised-evolution}
    \begin{split}
      &\partial_t u_i(x_i)
      - \epsilon \lap u_i(x_i) \\
      &\quad- \divergence_{x_i}
      \left(
        \prod_{k\not = i} \int_{x_k \in \Omega} \dd x_k\,
        K(x_1,\dots,x_n)\,
        \sum_{j=1}^{n}
        a_{ij}\big(u_1(x_1),\dots,u_n(x_n)\big)
        \nabla u_j(x_j)
      \right) \\
      &= 0
    \end{split}
  \end{equation}
  supplemented in the case of boundaries with von Neumann boundary
  conditions
  \begin{equation}
    \label{eq:general-regularised-evolution-boundary}
    n \cdot \nabla u_i(x) = 0
    \text{ for }
    x \in \partial\domain.
  \end{equation}
  Then it holds formally that
  \begin{equation*}
    \frac{\dd}{\dd t} H(u_1(t),\dots,u_n(t))
    \le 0.
  \end{equation*}

  In the case of uniform dissipations \((\alpha_i)_i\) it holds that
  \begin{equation*}
    \frac{\dd}{\dd t} H(u_1(t),\dots,u_n(t)) + D(t)
    \le 0,
  \end{equation*}
  where
  \begin{equation*}
    D(t) =
    \sum_{i=1}^n
    \int_\domain
    \left[\epsilon\, h_i''(u_i(x)) + \alpha_i(u_i(x))^2 w_i(x) \right]\;
    |\nabla u_i(x)|^2\, \dd x
  \end{equation*}
  with the weights \((w_i)_{i=1,\dots,n}\) defined as
  \begin{equation}
    \label{eq:mass-assumption-k}
    \prod_{j\not=i} \int_{x_j \in \Omega} \dd x_{j}
    K(x_1,\dots,x_n) = w_i(x_i) \ge 0,\qquad
    \forall x_i \in \Omega.
  \end{equation}
\end{prop}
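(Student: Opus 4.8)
The plan is to mirror the strategy used in the proof of \cref{prop:formal}: test the $i$-th equation of \eqref{eq:general-regularised-evolution} against the entropy variable $h_i'(u_i)$, integrate over $\domain$, and sum over $i=1,\dots,n$. First I would note that the time-derivative terms assemble into $\frac{\dd}{\dd t}H(u_1,\dots,u_n)$, since $\int_\domain h_i'(u_i)\,\partial_t u_i = \frac{\dd}{\dd t}\int_\domain h_i(u_i)$. It then remains to show that the two spatial contributions — the $\epsilon$-Laplacian and the nonlocal cross-diffusion divergence — both dissipate.

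For the regularising term, integration by parts gives $-\epsilon\int_\domain h_i'(u_i)\lap u_i = \epsilon\int_\domain h_i''(u_i)|\nabla u_i|^2$, where the surface contribution is discarded thanks to the von Neumann condition \eqref{eq:general-regularised-evolution-boundary}. For the cross-diffusion term I would integrate by parts in the $x_i$ variable; since the divergence acts only on $x_i$ and $\nabla_{x_i}h_i'(u_i(x_i)) = h_i''(u_i(x_i))\nabla u_i(x_i)$, this produces
\[
-\prod_{k\ne i}\int_{x_k\in\domain}\dd x_k\int_{x_i\in\domain}\dd x_i\, K\, h_i''(u_i(x_i))\,\nabla u_i(x_i)\cdot\sum_{j=1}^n a_{ij}\big(u_1(x_1),\dots,u_n(x_n)\big)\nabla u_j(x_j).
\]
Here the boundary term vanishes because the assumption \eqref{eq:assumption-k-boundary} forces $K$ to vanish as soon as $x_i\in\partial\domain$, so no no-flux condition on the cross-diffusion flux itself is needed — the two boundary cancellations rely on two different hypotheses.

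Summing over $i$ turns the repeated integral into a single integral over $\domain^n$, and the key structural step is to recognise the arising integrand. Writing $M_{ij}(y) = h_i''(y_i)a_{ij}(y)$ as in \eqref{eq:defM}, the summed cross-diffusion contribution to $-\frac{\dd}{\dd t}H$ reads
\[
\int_{\domain^n} K(x_1,\dots,x_n)\sum_{i,j=1}^n M_{ij}\big(u_1(x_1),\dots,u_n(x_n)\big)\,\nabla u_i(x_i)\cdot\nabla u_j(x_j)\;\dd x_1\cdots\dd x_n.
\]
Decomposing the $\R^d$-scalar products coordinate by coordinate, for each spatial direction $\ell=1,\dots,d$ the inner sum is exactly a quadratic form $z^T M(y)\,z$ evaluated at $y = (u_1(x_1),\dots,u_n(x_n))$ and $z = (\partial_\ell u_1(x_1),\dots,\partial_\ell u_n(x_n))$. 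Since $M$ takes values in the positive semi-definite cone by the entropy structure (\cref{def:entropy-structure}) and $K\ge 0$, the whole expression is non-negative, which together with the dissipative $\epsilon$-term yields $\frac{\dd}{\dd t}H\le 0$.

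For the uniform case I would insert the quantified lower bound \eqref{eq:quantised-m-positiviy-assumption} into each of these quadratic forms: summing over $\ell$ gives
\[
\sum_{i,j} M_{ij}(y)\,\nabla u_i(x_i)\cdot\nabla u_j(x_j) \ge \sum_{i=1}^n \alpha_i(u_i(x_i))^2\,|\nabla u_i(x_i)|^2.
\]
Integrating this against $K$ over $\domain^n$ and, for each $i$, carrying out the integrations in the variables $x_k$ with $k\ne i$ first, the definition \eqref{eq:mass-assumption-k} of the weight $w_i$ collapses the cross-diffusion dissipation to $\sum_i\int_\domain \alpha_i(u_i)^2 w_i\,|\nabla u_i|^2$; adding the $\epsilon$-contribution reproduces exactly the stated $D(t)$. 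I expect the only delicate part to be bookkeeping rather than analysis: keeping track of which variable each divergence and gradient act on through the passage from $n$ repeated integrals to one integral over $\domain^n$, and confirming that the two boundary terms genuinely cancel — the first by \eqref{eq:general-regularised-evolution-boundary}, the second by \eqref{eq:assumption-k-boundary}.
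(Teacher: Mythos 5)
Your proposal is correct and follows essentially the same route as the paper's proof: testing against \(h_i'(u_i)\), integrating by parts with the two boundary terms killed by \eqref{eq:general-regularised-evolution-boundary} and \eqref{eq:assumption-k-boundary} respectively, and collecting the tuple \((u_1(x_1),\dots,u_n(x_n))\) into the quadratic form with the matrix \(M\) from \cref{def:entropy-structure}. Your coordinate-by-coordinate decomposition of the \(\R^d\)-scalar products and the explicit collapse of the weighted dissipation via \eqref{eq:mass-assumption-k} are just details the paper leaves implicit.
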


Here we added a small global diffusion with \(\epsilon\) in order to
compensate that the kernel \(K\) vanishes at the boundary so that the
system becomes uniformly parabolic and we can obtain global regularity
estimates for the regularised system.  By the
assumption~\eqref{eq:assumption-k-boundary} the imposed von Neumann
boundary conditions imply zero-flux boundary conditions.

\begin{proof}
  In order to obtain the estimate, the idea is to collect the
  interaction in a tuple \(u_1(x_1),\dots,u_n(x_n)\). We then find for
  the dissipation
  \begin{equation*}
    \begin{split}
      &- \frac{\dd}{\dd t} H(u(t)) \\
      &= - \sum_{i=1}^{n} \frac{\dd}{\dd t} \int_{x_i \in \domain}
      h_i(x_i)\, \dd x_i \\
      &= \sum_{i=1}^{n} \int_{x_i}
      \nabla u_i(x_i)\, h_i''(u_i(x_i)) \\
      &\qquad\quad\cdot
      \left[
        \epsilon +
        \prod_{k\not = i} \int_{x_k \in \Omega} \dd x_k\,
        K(x_1,\dots,x_n)\,
        \sum_{j=1}^{n}
        a_{ij}\big(u_1(x_1),\dots,u_n(x_n)\big)
        \nabla u_j(x_j)
      \right]\, \dd x_i \\
      &= \int_{x_1} \dd x_1 \dots \int_{x_n} \dd x_n
      K(x_1,\dots,x_n)
      \begin{pmatrix}
        \nabla u_1(x_1) \\
        \vdots \\
        \nabla u_n(x_n)
      \end{pmatrix}
      \cdot
      M\big(u_1(x_1),\dots,u_n(x_n)\big)
      \begin{pmatrix}
        \nabla u_1(x_1) \\
        \vdots \\
        \nabla u_n(x_n)
      \end{pmatrix} \\
      &\qquad + \epsilon
      \int_{x \in \domain} \sum_{i=1}^{n}
      h_i''(u_i(x))\, |\nabla u_i(x)|^2\, \dd x,
    \end{split}
  \end{equation*}
  where \(M\) is the matrix from the entropy structure,
  \cref{def:entropy-structure}, and the boundary terms vanish due to
  the von Neumann boundary condition and
  \eqref{eq:assumption-k-boundary}.

  By the assumed sign of the matrix \(M\) and the lower bound by
  \(\alpha_i\), respectively, the result follows.
\end{proof}

\begin{remark}
  \label{rem:laplace-general-regularisation}

  The previous regularisation \eqref{eq:regularised-SKT-simple} for two
  species in the simple setting \(\domain=\R^d\) or \(\domain=\T^d\) is
  exactly recovered by setting \(K(x_1,x_2) = \rho(x_1-x_2)\) and
  dropping the normal diffusion with \(\epsilon\).

  This leaves the question whether the Laplace structure of a system of
  a the form \eqref{eq:local-SKT-rates-mu-kappa} can be preserved in the
  nonlocal version. Applying the regularisation procedure for a general
  kernel \(K\), we can rewrite the regularised evolution in the Laplace
  structure if
  \begin{equation*}
    \nabla_x K(x,y) = - \nabla_y K(x,y).
  \end{equation*}
  This, however, is only true if \(K\) has a convolution structure and
  thus does not work for domains with boundaries. Indeed we find for the
  two species system \eqref{eq:local-SKT-rates-mu-kappa} the
  regularisation
  \begin{equation}
    \label{eq:regularised-skt-general-k}
    \left\{
      \begin{lgathered}
        \begin{split}
          \partial_t u_1(x)
          &- \epsilon \lap u_1 - \lap_x\left[ \int_{y\in\domain} K(x,y)\, \mu_1\big(u_1(x),u_2(y)\big)\, \dd
            y\, u_1(x) \right] \\
          &=- \nabla_x
          \left[
            \int_{y\in\domain}
            \big[
            (\partial_x K)(x,y) + (\partial_y K)(x,y)
            \big]
            \mu_1\big(u_1(x),u_2(y)\big)\, \dd y\, u_1(x)
          \right],
        \end{split}
        \\
        \begin{split}
          \partial_t u_2(y)
          &- \epsilon \lap u_2 - \lap_y\left[ \int_{x\in\domain} K(x,y)\, \mu_2\big(u_1(x),u_2(y)\big)\, \dd
            x\, u_2(y) \right] \\
          &= - \nabla_y
          \left[
            \int_{x\in\domain}
            \big[
            (\partial_x K)(x,y) + (\partial_y K)(x,y)
            \big]
            \mu_2\big(u_1(x),u_2(y)\big)\, \dd x\, u_2(y)
          \right],
        \end{split}
      \end{lgathered}
    \right.
  \end{equation}
  which contains corrector terms for the defect of the convolution
  structure on the RHS.

  For the linear rate SKT system, this matches the regularisation
  following the discrete structure in
  \cite{daus-desvillettes-dietert-2019-entropic-structure}, where we
  identified the entropy with the reversibility of a corresponding
  Markov chain, see \cref{sec:reversibility}.
\end{remark}

\subsection{Results}
\label{sec:intro:results}

Having introduced the regularisation schemes, we can now state our
rigorous existence and approximation results.

Our first result shows that the regularisation
\eqref{eq:regularised-general-skt-torus:noself} for
\eqref{eq:local-SKT-rates-mu-kappa:noself} is sufficient to find
solutions satisfying the entropy-dissipation inequality. It will be
clear from the proof below that the diffusivity $\mu_i$'s could be
assumed sublinear, instead of being controlled by the entropy
densities. The self-diffusion could be included \emph{via} the more
general approximation \eqref{eq:regularised-general-skt-torus}
(for which there is a similar existence result) but we have chosen to
avoid it to simplify the presentation.
\begin{thm}
  \label{thm:existince-general-SKT}
  Consider the generalised SKT
  system~\eqref{eq:local-SKT-rates-mu-kappa:noself} with
  $\mu_1,\mu_2\in \fsC^0(\R_{\geq 0})\cap \fsC^1(\pR)$. Assume that it
  admits a uniform entropy structure with entropy $H$, entropy
  densities \(h_1,h_2 \in \fsC^0(\nnR)\cap\fsC^2(\pR)\) and
  dissipations \(\alpha_1,\alpha_2 : \nnR \to \nnR\). Assume
  furthermore two positive constants $\delta,\textnormal{A}$ such that
  for all \(z \in \nnR\)
  \begin{align}\label{ineq:assmu}
    \delta \leq \mu_1(z) \leq \textnormal{A}(1+h_2(z))
    \quad\text{and}\quad
    \delta \leq \mu_2(z) \leq \textnormal{A}(1+h_1(z)).
  \end{align}
  Fix $\rho\in\fsC^2(\T^d)$ non-negative having integral $1$ over
  $\T^d$, and a bounded initial data \(u_1^\init,u_2^\init\)
  satisfying for some positive constant $\gamma$
  \begin{align*}
    \gamma\leq u_i^\init \leq \gamma^{-1},
  \end{align*}
  so that $H^\init:=H(u_1^\init,u_2^\init) < +\infty$. Then, there
  exist positive functions
  \begin{align}\label{reg:ui}
    u_1,u_2 \in \fsC^0([0,T];\fsL^2(\T^d))\cap \fsL^2(0,T;\H^1(\T^d))\cap\fsL^\infty(0,T;\fsL^\infty(\T^d)),
  \end{align}
  such that $(u_1,u_2)$ is a distributional solution to the system
  \eqref{eq:regularised-general-skt-torus:noself} initiated by
  $(u_1^{\init},u_2^{\init})$. This solution $(u_1,u_2)$ satisfies
  furthermore the following estimates for $i=1,2$:
  \begin{itemize}
  \item \emph{conservation of the mass:}
    $u_i\in\fsC^0([0,T];\fsL^1(\T^d))$ and for $t\in [0,T]$
    \begin{align}\label{ineq:mass}
      \int_{\T^d} u_i(t) = \int_{\T^d} u_i^\init.
    \end{align}
  \item \emph{entropy estimate:} $h_i(u_i)\in\fsC^0([0,T];\fsL^1(\T^d))$
    and for $t\in [0,T]$
    \begin{align}\label{ineq:ent}
      H(u_1(t),u_2(t)) + \int_0^t D(s)\,\dd s \leq H^\init,
    \end{align}
    where
    \begin{align*}
      D(t) &:= \int_{\T^d} \alpha_1(u_1(t))^2|\nabla u_1(t)|^2 + \alpha_2(u_2(t))^2|\nabla u_2(t)|^2.
    \end{align*}
  \item \emph{maximum principle:}
    \begin{align}\label{ineq:max}
        \gamma \exp\left(-\textnormal{A}\textnormal{B}_{T,\init}\|\Delta \rho\|_{\fsL^\infty(\T^d)}\right) \leq u_i \leq \gamma^{-1}\exp\left(\textnormal{A}\textnormal{B}_{T,\init}\|\Delta \rho\|_{\fsL^\infty(\T^d)}\right),
    \end{align}
    where
    \[\textnormal{B}_{T,\init}:=T(1+ H^\init).\]
  \item \emph{duality estimate:}
    \begin{multline}\label{ineq:dua}
      \int_{Q_T} \big(\big[\mu_1(u_2)\star\rho\big] u_1
      + \big[\mu_2(u_1)\star\check{\rho}\big] u_2\big)\;(u_1+u_2) \\
      \lesssim_{d} (1+2\textnormal{A}\textnormal{B}_{T,\init}) \left(\int_{\T^d} (u_1^\init)^2 + \int_{\T^d} (u_2^\init)^2\right),
    \end{multline}
    where the constant behind $\lesssim_{d}$ depends only on the
    dimension \(d\).
  \end{itemize}
\end{thm}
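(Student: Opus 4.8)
The plan is to construct the solution by a fixed-point scheme in which the convolved diffusion rates are frozen, to derive the four listed estimates as \emph{a priori} bounds that are uniform along the scheme, and to pass to the limit by compactness. The decisive structural feature is that the rate $\mu_i(u_j)\conv\rho$ depends on the other density only through a convolution with the $\fsC^2$ kernel $\rho$: this makes the diffusion coefficient of each equation smooth in space and continuous under weak convergence, which is what turns the strongly coupled quasilinear system into a tractable fixed-point problem and later yields the compactness needed in the limit.

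Concretely, I would freeze $b_1 := \mu_1(\bar u_2)\conv\rho$ and $b_2 := \mu_2(\bar u_1)\conv\check\rho$ for a given pair $(\bar u_1,\bar u_2)$ and solve the two \emph{decoupled} linear equations $\partial_t u_i = \lap(b_i u_i)$ with the prescribed initial data. Since $\int_{\T^d}\rho = 1$ and $\mu_i\ge\delta$, each $b_i$ is bounded below by $\delta$, so these equations are uniformly parabolic; writing $\lap(b_i u_i) = \divergence(b_i\nabla u_i + u_i\nabla b_i)$ and noting that $b_i$ is smooth in $x$ with bounds governed by $\|\mu_i(\bar u_j)\|_{\fsL^1}$, standard linear parabolic theory gives a unique solution in $\fsC^0([0,T];\fsL^2(\T^d))\cap\fsL^2(0,T;\H^1(\T^d))$, non-negative and bounded by the maximum principle. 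To make the map $\Phi:(\bar u_1,\bar u_2)\mapsto(u_1,u_2)$ well defined on the whole space despite the possibly unbounded $\mu_i$, I truncate $\mu_i$ above a large threshold $L$, keeping $\delta\le\mu_i$ and the growth bound \eqref{ineq:assmu}; the point of the bootstrap below is to choose $L$ \emph{larger} than the a priori $\fsL^\infty$ bound so that the truncation is inactive on the actual solutions and the genuine entropy structure of \cref{def:entropy-structure} is used. The map $\Phi$ is continuous and, thanks to the smoothing by $\rho$ together with the parabolic gain of regularity (Aubin--Lions), compact on $\fsC^0([0,T];\fsL^2(\T^d))$.

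The heart of the argument is to derive the four estimates on a genuine fixed point $u=\Phi(u)$ (or on $u=s\Phi(u)$ for the Leray--Schauder homotopy, with $s$ carried by the initial data), where the coupling is exact and the computation of \cref{prop:formal} applies. Conservation of mass \eqref{ineq:mass} is immediate from the divergence form. Multiplying the $i$-th equation by $h_i'(u_i)$ and performing the reflection change of variables of \cref{prop:formal} collects the pair $(u_1(x),u_2(x-y))$ and produces the entropy--dissipation inequality \eqref{ineq:ent}, which is the linchpin of the whole proof. The maximum principle \eqref{ineq:max} then follows by a pointwise argument on $\max_x u_i$ and $\min_x u_i$: at an extremum $\nabla u_i=0$ and $\lap u_i$ has a sign, so $\partial_t u_i$ is controlled by $u_i\,\|\lap b_i\|_{\fsL^\infty}$, and $\|\lap b_i(t)\|_{\fsL^\infty} = \|\mu_i(u_j(t))\conv\lap\rho\|_{\fsL^\infty} \le \|\lap\rho\|_{\fsL^\infty}\,\|\mu_i(u_j(t))\|_{\fsL^1} \le \textnormal{A}\,\|\lap\rho\|_{\fsL^\infty}(1+H(u(t)))$, whose time integral is bounded by $\textnormal{A}\,\textnormal{B}_{T,\init}\,\|\lap\rho\|_{\fsL^\infty}$ by \eqref{ineq:ent}; a Gr\"onwall argument gives exactly \eqref{ineq:max}. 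This bound is uniform in the homotopy parameter and in $L$, so it both provides the a priori bound required by Leray--Schauder and lets me fix $L$ above it, closing the bootstrap. Finally, summing the two equations gives $\partial_t(u_1+u_2) = \lap(\mathcal A(u_1+u_2))$ with $\mathcal A = (b_1u_1+b_2u_2)/(u_1+u_2) \ge \delta$, and the $\fsL^2$ duality method applied to this scalar equation yields \eqref{ineq:dua}, the constant depending only on $d$ through the regularity of the dual parabolic problem and on $\int_0^T\!\int_{\T^d}(b_1+b_2) = \int_0^T(\|\mu_1(u_2)\|_{\fsL^1}+\|\mu_2(u_1)\|_{\fsL^1}) \le 2\textnormal{A}\,\textnormal{B}_{T,\init}$.

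With the fixed point in hand one passes, if an auxiliary regularisation (e.g.\ of the initial data) was used, to the limit: the uniform entropy and duality bounds give $\nabla u_i\in\fsL^2$ (using that $\alpha_i$ is bounded below on the compact positive range furnished by \eqref{ineq:max}), the uniform $\fsL^\infty$ bound together with $\partial_t u_i\in\fsL^2(0,T;\H^{-1}(\T^d))$ gives compactness in $\fsC^0([0,T];\fsL^2(\T^d))$ via Aubin--Lions, and the convolution structure upgrades this to strong convergence of $\mu_i(u_j)\conv\rho$, so that all nonlinear terms pass to the limit and the limit solves \eqref{eq:regularised-general-skt-torus:noself}. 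The continuity statements $u_i\in\fsC^0([0,T];\fsL^1(\T^d))$ and $h_i(u_i)\in\fsC^0([0,T];\fsL^1(\T^d))$ follow from the $\fsL^2$-continuity together with the equi-integrability provided by the entropy. I expect the main obstacle to be exactly the reconciliation between the genuinely unbounded rate $\mu_i$ and the fixed-point/compactness machinery while keeping the \emph{exact} entropy structure: one may not truncate $\mu_i$ freely, since altering $A$ destroys the sign of $M$ and hence \eqref{ineq:ent}; the resolution is the threshold bootstrap above, where the entropy estimate itself produces the $\fsL^\infty$ bound that guarantees the truncation is never seen by the solution.
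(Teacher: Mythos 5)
Your proposal is correct and follows essentially the same route as the paper: decouple the system by freezing/truncating the convolved rates, solve the resulting Kolmogorov-type equations, obtain a fixed point via Aubin--Lions compactness and Schauder, derive the four estimates exactly as you describe (entropy via the reflection computation of \cref{prop:formal}, maximum principle from $\int_0^T\|\lap(\mu_i(u_j)\conv\rho)\|_{\fsL^\infty}\le \textnormal{A}\textnormal{B}_{T,\init}\|\lap\rho\|_{\fsL^\infty}$, duality on $u_1+u_2$ with $\int_{Q_T}\mu\le 2\textnormal{A}\textnormal{B}_{T,\init}$), and close the truncation bootstrap by a continuity-in-time argument with the cutoff fixed at $2\gamma^{-1}\exp(\textnormal{A}\textnormal{B}_{T,\init}\|\lap\rho\|_{\fsL^\infty})$. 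The only cosmetic differences are that the paper truncates the argument of the rates ($\mu_i(u_j^{+}\wedge M)$) rather than $\mu_i$ itself, uses plain Schauder on a compact convex subset of $\fsL^1(Q_T)^2$ rather than a Leray--Schauder homotopy, and replaces your pointwise extremum/Gr\"onwall argument by the rigorous comparison-function version for the Kolmogorov equation (\cref{cor:kol1}).
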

\begin{remark}
  The upper-bound in assumption \eqref{ineq:assmu} is natural for many
  cross-diffusion systems. For instance if $\mu_1$ and $\mu_2$ are
  given by power-laws (as in
  \cite{desvillettes-lepoutre-moussa-trescases-2015-entropic-structure}),
  the entropy densities are precisely given by the same exponents
  (with an exception for the linear case). See also \cref{rem:sktasym}.
\end{remark}

Using the uniform control by the entropy, we can prove the following
limit theorem. Together with the previous existence result, this
shows, as a by by-product, the (known) existence of weak solutions to
the generalized SKT system \eqref{eq:local-SKT-rates-mu-kappa:noself}.
\begin{thm}\label{thm:sktasym}
  Consider the assumptions of Theorem~\ref{thm:existince-general-SKT},
  for a sequence of non-negative functions
  $(\rho_n)_n \in\fsC^2(\T^d)^{\mathbb{N}}$ which converges weakly
  towards the Dirac mass, with dissipation rates $\alpha_1$ and
  $\alpha_2$ vanishing on a set of measure $0$. Assume furthermore
  that the diffusivities are strictly subquadratic or controlled by
  the entropy densities, that is
  \begin{align}\label{lim:mubound}\lim_{z\rightarrow +\infty}
    \frac{\mu_1(z)}{h_2(z)+ z^2}+ \frac{\mu_2(z)}{h_1(z)+ z^2} =
    0.
  \end{align}
  Then, the corresponding sequence of solutions
  $(u_{1,n},u_{2,n})_n$ given by
  Theorem~\ref{thm:existince-general-SKT} converges (up to a
  subsequence) in $\fsL^1(Q_T)$ towards a weak global solution
  $(u_1,u_2)$ of the SKT system which satisfies for a.e. $t\in[0,T]$
  the conservation of the mass \eqref{ineq:mass}, the entropy
  estimate \eqref{ineq:ent} and the following duality estimate
  \begin{multline}\label{ineq:dua:good}
    \int_{Q_T} \big(\mu_1(u_2) u_1 + \mu_2(u_1)u_2\big)(u_1+u_2)\\
    \lesssim_{d} (1+2 \textnormal{AB}_{T,\init}) \int_{\T^d} (u_1^\init)^2 + \int_{\T^d} (u_2^\init)^2.
  \end{multline}
\end{thm}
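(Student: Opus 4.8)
The plan is to pass to the limit $n\to\infty$ in the regularised solutions $(u_{1,n},u_{2,n})$ produced by \cref{thm:existince-general-SKT}, keeping only the bounds that are \emph{uniform in $n$}. The essential difficulty, already visible in the statement, is that the maximum principle \eqref{ineq:max} degenerates: as $\rho_n\weakto\delta_0$ one has $\|\lap\rho_n\|_{\fsL^\infty(\T^d)}\to+\infty$, so the $\fsL^\infty$ control is lost and must be discarded. The three remaining estimates are $n$-independent and form the backbone of the argument: the conservation of mass \eqref{ineq:mass}, the entropy--dissipation inequality \eqref{ineq:ent}, and the duality estimate \eqref{ineq:dua} (whose right-hand side does not involve $n$). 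I would first extract their consequences: \eqref{ineq:ent} bounds $\sup_{t}\int_{\T^d}h_i(u_{i,n}(t))$ and $\int_0^T D_n$, so that the superlinearity of the convex densities $h_i$ yields, through de la Vall\'ee-Poussin, equi-integrability of $(u_{i,n})_n$ uniformly in $t$; meanwhile \eqref{ineq:dua} together with $\mu_i\ge\delta$ gives an $n$-uniform bound of $(u_{i,n})_n$ in $\fsL^2(Q_T)$, and, via the upper bounds \eqref{ineq:assmu}, an $n$-uniform $\fsL^1(Q_T)$ bound on the two regularised fluxes $F_{1,n}:=(\mu_1(u_{2,n})\star\rho_n)\,u_{1,n}$ and $F_{2,n}:=(\mu_2(u_{1,n})\star\check\rho_n)\,u_{2,n}$.

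For compactness I would introduce the primitives $\beta_i(z):=\int_0^z\alpha_i(s)\,\dd s$, so that $\nabla\beta_i(u_{i,n})=\alpha_i(u_{i,n})\nabla u_{i,n}$ and the dissipation bound $\int_0^T D_n<\infty$ says exactly that $(\nabla\beta_i(u_{i,n}))_n$ is bounded in $\fsL^2(Q_T)$; combined with the mass bound this controls $(\beta_i(u_{i,n}))_n$ in $\fsL^2(0,T;\H^1(\T^d))$ after a Poincar\'e argument. Because the dissipations $\alpha_i$ vanish only on a null set, each $\beta_i$ is continuous and strictly increasing, hence a homeomorphism onto its image with continuous inverse $\beta_i^{-1}$, so any limit of $\beta_i(u_{i,n})$ pins down a limit of $u_{i,n}$. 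The spatial regularity of $\beta_i(u_{i,n})$ together with the time regularity $\partial_t u_{i,n}=\lap F_{i,n}$, with $F_{i,n}$ bounded in $\fsL^1(Q_T)$ (hence $(\partial_t u_{i,n})_n$ bounded in $\fsL^1(0,T;W^{-2,1}(\T^d))$), feeds a nonlinear Aubin--Lions compactness lemma of the type available for such degenerate cross-diffusion systems, yielding, up to a subsequence, strong convergence $u_{i,n}\to u_i$ in $\fsL^1(Q_T)$ and almost everywhere.

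The core of the proof is the passage to the limit in the nonlinear nonlocal fluxes, where de-convolution and the nonlinearity must be handled at once. I would proceed in three steps. First, since $u_{j,n}\to u_j$ a.e.\ and $\mu_i$ is continuous, $\mu_i(u_{j,n})\to\mu_i(u_j)$ a.e.; the growth hypothesis \eqref{lim:mubound}, dominating $\mu_i$ by $h_j+z^2$, combined with the entropy bound on $\int h_j(u_{j,n})$ and the $\fsL^2(Q_T)$ bound on $u_{j,n}$, makes $(\mu_i(u_{j,n}))_n$ equi-integrable, so Vitali gives $\mu_i(u_{j,n})\to\mu_i(u_j)$ in $\fsL^1(Q_T)$. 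Second, removing the kernel is then routine: writing $g_n\star\rho_n-g=(g_n-g)\star\rho_n+(g\star\rho_n-g)$ with $g_n=\mu_i(u_{j,n})$ and $g=\mu_i(u_j)$, the first term is controlled by $\|g_n-g\|_{\fsL^1(Q_T)}$ and the second vanishes because $\rho_n$ is a probability kernel concentrating at $0$, so $\mu_i(u_{j,n})\star\rho_n\to\mu_i(u_j)$ in $\fsL^1(Q_T)$ (and identically with $\check\rho_n$). Third, multiplying by the a.e.-convergent factor $u_{i,n}$, I would invoke \eqref{lim:mubound} once more to secure equi-integrability of the products $F_{i,n}$ and apply Vitali to conclude $F_{i,n}\to\mu_i(u_j)\,u_i$ in $\fsL^1(Q_T)$. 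This is enough to pass to the limit in the distributional formulation of \eqref{eq:regularised-general-skt-torus:noself} and identify $(u_1,u_2)$ as a weak solution of \eqref{eq:local-SKT-rates-mu-kappa:noself}.

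Finally the announced estimates transfer to the limit by stability: \eqref{ineq:mass} passes through the strong $\fsL^1$ convergence; the entropy inequality \eqref{ineq:ent} passes by convexity (weak lower semicontinuity of $u\mapsto\int h_i(u)$) together with the weak convergence $\alpha_i(u_{i,n})\nabla u_{i,n}\weakto\alpha_i(u_i)\nabla u_i$ in $\fsL^2(Q_T)$ and lower semicontinuity of the norm for the dissipation $D$; and the duality estimate \eqref{ineq:dua:good} follows from \eqref{ineq:dua} by Fatou, using that $\mu_i(u_{j,n})\star\rho_n\to\mu_i(u_j)$. I expect the main obstacle to be exactly this double limit in the flux: having lost the uniform $\fsL^\infty$ bound, one cannot treat $\mu_i$ as bounded, and the only quantitative grip on large values of the densities is the entropy, which is precisely why the strict subquadratic/entropy condition \eqref{lim:mubound} is imposed. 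The delicate point is to interlock the de-convolution with the nonlinear a.e.\ convergence through equi-integrability rather than treating the two limits independently.
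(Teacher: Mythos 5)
Your overall route is the paper's: discard the degenerating maximum principle, keep the $n$-uniform mass/entropy/duality bounds, obtain a.e.\ compactness from the dissipation, pass to the limit in the fluxes via Vitali after de-convolving, and transfer the estimates by Fatou. But one step, as written, does not close: the equi-integrability of the products $F_{1,n}=[\mu_1(u_{2,n})\star\rho_n]\,u_{1,n}$ is \emph{not} obtained by invoking \eqref{lim:mubound} ``once more''. That assumption only controls the factor $\mu_1(u_{2,n})$, and the product of an $\fsL^1$-uniformly integrable sequence with an $\fsL^2$-bounded one is in general not uniformly integrable. The paper's mechanism is genuinely different at this point and uses the duality estimate \eqref{ineq:dua} quadratically: write $F_{1,n}=\big[\mu_1(u_{2,n})\star\rho_n\big]^{1/2}\cdot\big[\mu_1(u_{2,n})\star\rho_n\big]^{1/2}u_{1,n}$, observe that the second factor is bounded in $\fsL^2(Q_T)$ precisely because the duality estimate bounds $\int_{Q_T}[\mu_1(u_{2,n})\star\rho_n]\,u_{1,n}(u_{1,n}+u_{2,n})$ (all terms nonnegative), while the first factor is $\fsL^2$-uniformly integrable since $\mu_1(u_{2,n})\star\rho_n$ is uniformly integrable in $\fsL^1$; Cauchy--Schwarz on small sets then yields the uniform integrability of $F_{1,n}$. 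You listed the duality bound at the outset, so the ingredients are at hand, but the step must be rewired through this splitting; \eqref{lim:mubound} alone fails here.

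The compactness step has a second, smaller gap. Your primitive $\beta_i(z)=\int_0^z\alpha_i$ need not be Lipschitz (the dissipations $\alpha_i$ are not assumed bounded), so the claimed $\fsL^2(0,T;\H^1(\T^d))$ control of $\beta_i(u_{i,n})$ by ``mass bound plus Poincar\'e'' does not follow: when $\beta_i$ is superlinear, the spatial mean of $\beta_i(u_{i,n})$ is not controlled by the mass. The paper avoids this by truncating, taking $F(z)=\int_0^z 1\wedge\alpha$, which is $1$-Lipschitz and increasing, so that $F(w_n)$ inherits the $\fsL^2$ bound from the densities and the gradient bound from the dissipation. Moreover, the ``nonlinear Aubin--Lions lemma'' you invoke as a black box is exactly what the paper supplies and proves as Lemma~\ref{lem:com}: weak $\fsL^2$ limits of $w_n$ and $F(w_n)$, a compensated-compactness product convergence (\cite[Proposition 3]{mou}, using the time-derivative bound in $\fsL^1(0,T;\H^{-m})$), and the Minty--Browder/Leray--Lions trick with $h_n=(F(w_n)-F(w))(w_n-w)\ge 0$ converging to $0$ in $\fsL^1$, whence a.e.\ convergence by strict monotonicity --- note that no continuity of $\beta_i^{-1}$ is needed, only monotonicity. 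Both gaps are fixable without changing your architecture, but as stated neither the product equi-integrability nor the $\H^1$ bound on $\beta_i(u_{i,n})$ goes through.
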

\begin{remark}\label{rem:sktasym}
  The assumption \eqref{lim:mubound} is crucial to avoid any
  concentration in the non-linearities of the system. However, in
  practice (see for instance the power-law case in
  \cite{desvillettes-lepoutre-moussa-trescases-2015-entropic-structure})
  the control of gradients of the entropy estimate gives raise (by
  Sobolev embedding) to another estimate on $\mu_1(u_2)$ and
  $\mu_2(u_1)$.
\end{remark}

In a general setting we described the regularisation scheme
\eqref{eq:general-regularised-evolution}, for which we can state the
following existence result.
\begin{thm}
  \label{thm:general-system-existence-regularised}

  Consider a cross-diffusion system
  \eqref{eq:general-local-cross-diffusion} for \(n\) species and rates
  \(a_{ij} \in \fsC^0(\R_{\ge 0}^n)\), \(i,j=1,\dots,n\). Assume that
  it admits a uniform entropy structure with entropy $H$, entropy
  densities
  \(h_1,\dots,h_n\in \mathscr{C}^0(\R_{\geq
    0})\cap\mathscr{C}^2(\R_{>0})\) and dissipations
  \(\alpha_1,\dots,\alpha_n : \R_{\ge 0} \to \R_{\ge 0}\).

  For \(i\not = j\) define \(\tilde{a}_{ij} : \R_{\ge 0}^n \to \R\) by
  \begin{equation*}
    \partial_j \tilde{a}_{ij}(v) = a_{ij}(v), \quad v \in \R_{\ge 0}^n,
    \qquad\text{and}\qquad
    \tilde{a}_{ij}(v) = 0, \quad \text{if } v_j=0.
  \end{equation*}
  Suppose that \(v \mapsto \tilde{a}_{ij}(v)\) is continuously
  differentiable with respect to \(v_i\) and that there exists a
  constant \(A\) such that for all \(v \in \R_{\ge 0}^n\) and
  \(i=1,\dots,n\)
  \begin{equation*}
    a_{ii}(v) \le A\, (1 + h_1(v_1) + \dots + h_n(v_n)),
  \end{equation*}
  \begin{equation*}
    \frac{\tilde{a}_{ij}(v)}{v_i} \le A\, (1 + h_1(v_1) + \dots + h_n(v_n)),
  \end{equation*}
  and
  \begin{equation*}
    \partial_i \tilde{a}_{ij}(v) \le A\, (1 + h_1(v_1) + \dots + h_n(v_n)).
  \end{equation*}

  Let \(\domain \in \R^d\) be a domain with piecewise \(\fsC^1\)
  boundary and \(K \in \fsC^2_c(\domain^n)\) be a nonnegative kernel
  satisfying \eqref{eq:assumption-k-boundary}. Further fix bounded
  initial data \(u^\init = (u_1^\init,\dots,u_n^\init)\) satisfying
  for some positive constant $\gamma$
  \begin{align*}
    \gamma\leq u_i^\init \leq \gamma^{-1},
  \end{align*}
  so that $H^\init:=H(u^\init) < +\infty$. Then, there exists positive functions
   \begin{align}\label{eq:general:regularity}
      u_1,\dots,u_n \in \fsC^0([0,T];\fsL^2(\T^d))\cap \fsL^2(0,T;\H^1(\T^d))\cap\fsL^\infty(0,T;\fsL^\infty(\T^d)),
   \end{align}
   such that $(u_1,\dots,u_n)$ is a distributional solution to the
   system \eqref{eq:regularised-skt-general-k} with initial data
   \(u^\init\) and von Neumann boundary data
   \eqref{eq:general-regularised-evolution-boundary}. Furthermore, the
   solution $u=(u_1,\dots,u_n)$ satisfies furthermore the following
   estimates for $i=1,\dots,n$:
   \begin{itemize}
   \item \emph{conservation of the mass:}
     $u_i\in\fsC^0([0,T];\fsL^1(\domain))$ and for $t\in [0,T]$
     \begin{align}\label{eq:approx-general:mass}
       \int_{\T^d} u_i(t) = \int_{\T^d} u_i^\init.
     \end{align}
   \item \emph{entropy estimate:}
     $h_i(u_i)\in\mathscr{C}^0([0,T];\fsL^1(\domain))$ and for
     $t\in [0,T]$
     \begin{align}\label{eq:approx-general:entropy}
       H(u(t))) + \int_0^t D(s)\,\dd s \leq H^\init,
     \end{align}
     \begin{equation*}
       D(t) =
       \sum_{i=1}^n
       \int_\domain
       \left[\epsilon\, h_i''(u_i(x)) + \alpha_i(u_i(x))^2 w_i(x) \right]\;
       |\nabla u_i(x)|^2\, \dd x
     \end{equation*}
     with the weights \((w_i)_{i=1,\dots,n}\) defined in
     \eqref{eq:mass-assumption-k}.
   \item \emph{maximum principle:}
     \begin{align}\label{eq:approx-general:max}
       \gamma
       \exp\left(-MT\right)
       \leq u_i
       \leq \gamma^{-1}\exp\left(MT\right),
     \end{align}
     where
     \begin{equation}
       \label{eq:growth-constant-m}
       M = 2A\, \max(\|K\|_\infty,\|\nabla K\|_\infty,\|\nabla^2 K\|_\infty)\,
       (|\domain| + H^\init).
     \end{equation}
   \item \emph{\(\epsilon\) regularity:}
     \begin{equation*}
       \sup_{t\in [0,T]} \| u_i(t,\cdot) \|_{\fsL^2(\domain)}^2
       + \epsilon \int_0^T \| \nabla u_i(t,\cdot) \|_{\fsL^2(\domain)}^2
       \le
       \exp\left[TM
         \Big(
         2+\frac{1}{\epsilon}
         \Big)
       \right]
       \| u_i^\init \|_{\fsL^2(\domain)}^2.
     \end{equation*}
   \end{itemize}
\end{thm}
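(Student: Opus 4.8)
The plan is to build solutions of the regularised system \eqref{eq:general-regularised-evolution} by a fixed-point scheme on a truncated, linearised problem, exploiting the two regularising mechanisms at our disposal: the uniformly parabolic term \(\epsilon\lap u_i\) and the smoothing produced by the spatial averages against \(K\). The first step is to reformulate the cross terms so that no derivative falls on the unknowns \(u_j\), \(j\neq i\). For such \(j\), the chain rule gives \(a_{ij}(u)\,\nabla u_j(x_j)=\nabla_{x_j}\big[\tilde a_{ij}(u)\big]\), so that integration by parts in \(x_j\) — whose boundary terms vanish by \eqref{eq:assumption-k-boundary} together with \(\tilde a_{ij}(v)=0\) for \(v_j=0\) — turns \(\prod_{k\neq i}\int_{x_k} K\,a_{ij}(u)\,\nabla u_j(x_j)\) into \(-\prod_{k\neq i}\int_{x_k}(\nabla_{x_j}K)\,\tilde a_{ij}(u)\). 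After this manipulation the only gradient of an unknown remaining in the \(i\)-th equation is the self term \(w_i(x_i)\,\nabla u_i(x_i)\), with the nonnegative coefficient \(w_i\) from \eqref{eq:mass-assumption-k} weighted by \(a_{ii}\), whereas every genuinely nonlocal contribution now carries a derivative of the \emph{smooth} kernel \(K\) rather than of \(u\). Taking \(\divergence_{x_i}\) of the reformulated flux produces a term in \(\nabla^2K\cdot\tilde a_{ij}(u)\) and an advection-type term in \(\nabla K\cdot\partial_i\tilde a_{ij}(u)\,\nabla u_i\). This is precisely what makes the three structural hypotheses natural: the bound on \(a_{ii}\) controls the self-diffusion coefficient, the bound on \(\tilde a_{ij}/v_i\) lets one factor out \(u_i\) from the \(\nabla^2K\) term, and the bound on \(\partial_i\tilde a_{ij}\) controls the advection term — and it explains the appearance of \(\|K\|_\infty,\|\nabla K\|_\infty,\|\nabla^2K\|_\infty\) in the growth constant \(M\) of \eqref{eq:growth-constant-m}.

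Next I would truncate the rates \(a_{ij}\) and the entropy densities \(h_i\) at a large level \(L\), rendering all coefficients bounded, and set up a Schauder fixed point. Freezing the nonlinear coefficients at some \(\bar u\) in a ball of \(\fsC^0([0,T];\fsL^2(\domain))^n\), the reformulated system becomes a linear, uniformly parabolic system — uniform ellipticity being guaranteed by \(\epsilon>0\) together with \(w_i\ge 0\) — supplemented with the von Neumann conditions \eqref{eq:general-regularised-evolution-boundary}, for which standard parabolic theory yields a unique solution. Because of the nonlocal averaging, the frozen coefficients depend on \(\bar u\) only through spatial integrals against \(K\) and its derivatives, hence are continuous functions of \(\bar u\); the solution map is therefore continuous and, by Aubin–Lions, compact, so Schauder's theorem provides a fixed point \(u^L\) of the truncated system.

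I would then establish the a priori estimates, uniformly in \(L\). The entropy inequality \eqref{eq:approx-general:entropy} follows by rigorously repeating the computation of \cref{thm:formal-general}: testing the \(i\)-th equation with \(h_i'(u_i)\), collecting the tuple \((u_1(x_1),\dots,u_n(x_n))\), and using the uniform positivity \eqref{eq:quantised-m-positiviy-assumption} of \(M\) together with the good sign of the \(\epsilon\)-term; this gives \(H(u^L(t))\le H^\init\) and the dissipation control of \(\alpha_i(u_i)\nabla u_i\) and of \(\sqrt{\epsilon}\,\nabla u_i\). The maximum principle \eqref{eq:approx-general:max} is obtained by testing with \((u_i-\gamma^{-1}\ee^{Mt})_+\) for the upper bound and with the analogous truncation at \(\gamma\ee^{-Mt}\) for the lower one: here the reformulation is essential, since the coefficients are dominated by \(A(1+\sum_k h_k(u_k))\) and the spatial integral \(\int_\domain\sum_k h_k(u_k)\le H^\init\) converts these growths into the factor \((|\domain|+H^\init)\) of \(M\), while the kernel-derivative norms absorb the nonlocal terms. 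Finally, the \(\epsilon\)-regularity estimate comes from testing with \(u_i\) itself, the coercive term \(\epsilon\|\nabla u_i\|_{\fsL^2}^2\) absorbing the cross terms through Young's inequality, which is what produces the \(1/\epsilon\) in the exponent. Since these bounds are \(L\)-uniform, the \(L^\infty\)-bound lies below \(L\) for \(L\) large, so the truncation is inactive and \(u^L\) solves the untruncated system; the solution inherits the regularity \eqref{eq:general:regularity}, and passage to the limit in the nonlinearities is immediate because the strong \(\fsL^2\)-compactness furnished by Aubin–Lions passes through the continuous spatial averages.

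The main obstacle is the maximum principle, and more precisely its interplay with the entropy bound. The coefficients are not a priori bounded but grow like \(1+\sum_k h_k(u_k)\), so the \(L^\infty\)-control cannot be read off from a naive comparison principle; it is the integration-by-parts reformulation \emph{combined} with the entropy estimate that closes the argument. One must verify carefully that, after moving all derivatives onto the smooth kernel, the remaining coefficient integrals are dominated by \(\int_\domain\sum_k h_k(u_k)\le H^\init\), so that the testing argument yields exactly the growth rate \(M\) of \eqref{eq:growth-constant-m}. Carrying this out rigorously at the truncated level, and checking that the resulting constant is independent of \(L\), is the technical heart of the proof.
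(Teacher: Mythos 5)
Your reformulation is exactly the paper's: integrating the cross terms by parts onto the kernel (with boundary terms killed by \eqref{eq:assumption-k-boundary} and \(\tilde a_{ij}(v)=0\) at \(v_j=0\)) to obtain a uniformly parabolic equation for each \(u_i\) with a self-diffusion coefficient \(\epsilon+\bar a_i[u]\), an advection coefficient \(\bar b_i[u]\) carrying \(\nabla K\cdot\partial_i\tilde a_{ij}\), and a zeroth-order coefficient \(\bar c_i[u]\) carrying \(\nabla^2K\cdot\tilde a_{ij}/u_i\); the coefficient bounds \eqref{eq:general-coefficient-bounds}, the constant \(M\) of \eqref{eq:growth-constant-m}, the entropy and \(\epsilon\)-energy estimates, and the Aubin--Lions compactness all match the paper's proof. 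The construction step differs only mildly (Schauder with frozen coefficients and linear parabolic theory, versus the paper's truncation-plus-mollification \eqref{eq:mollified-galerkin} solved by a Galerkin scheme), and that difference is not in itself problematic.

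The genuine gap is in how you remove the truncation. You truncate the rates \(a_{ij}\) and the entropy densities \(h_i\) at level \(L\) and then assert that the entropy inequality \(H(u^L(t))\le H^\init\) follows ``by rigorously repeating the computation of \cref{thm:formal-general}.'' This step fails: the entropy structure is an algebraic identity relating \(h_i''\) and \(a_{ij}\) (for SKT, \(h_1''(u_1)a_{12}(u)=d_{12}d_{21}\) \emph{exactly}), and a pointwise cutoff \(a_{ij}\wedge L\) (or a truncation of \(h_i\), which alters \(h_i''\)) destroys the positivity \eqref{eq:quantised-m-positiviy-assumption} of the matrix \(M(v)\). Since your maximum principle is derived from the coefficient bounds, which themselves require the entropy bound, the whole uniform-in-\(L\) chain entropy \(\Rightarrow\) coefficient bounds \(\Rightarrow\) \(\fsL^\infty\) bound becomes circular at the truncated level — which your final paragraph flags as ``the technical heart'' but does not resolve. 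The paper's resolution is different and decisive: it truncates not the rates but the nonlocal coefficient functionals \(\bar a_i[u]\wedge M\), \(\bar b_i[u]\wedge M\), \(\bar c_i[u]\wedge M\) at the \emph{explicit} threshold \(M\) of \eqref{eq:growth-constant-m}, never claims an entropy estimate for the truncated system, and runs a continuation argument in time: since \(H(u^\init)=H^\init\), the bounds \eqref{eq:general-coefficient-bounds} give coefficients \(\le M/2<M\) at \(t=0\), so the cutoff is inactive on a maximal interval \([0,T^*)\); there the solution solves the \emph{true} system, the entropy estimate applies and keeps \(H(u(t))\le H^\init\), hence the coefficients remain \(\le M/2\) up to \(T^*\), forcing \(T^*=T\). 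To repair your argument you would need to import exactly this mechanism (truncate at \(M\), or show the \(L\)-truncation is inactive on a maximal time interval by the same bootstrap); as written, the uniform entropy estimate for \(u^L\) is unjustified.
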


Under the assumption that the dissipation is big enough, one can
conclude that the approximations converge to the local version. In the
setting of their time-discretisation approximation scheme,
\cite{chen-daus-juengel-2017-global-existence} discusses possible
conditions for such a convergence. Nevertheless, they need to treat
the SKT case separately.

As the SKT case is the motivating example, we focus on the SKT case,
where we replace the duality estimate with a positive
self-diffusion. For \(n\) species with densities \(u=(u_1,\dots,u_n)\)
the SKT system corresponds to the evolution
\begin{equation}
  \label{eq:skt-n-species}
  \partial_t u_i = \lap \left(d_i u_i + \sum_{j=1}^{n} d_{ij} u_j u_i \right)
\end{equation}
with constants \(d_1,\dots,d_n\ge 0\) and \((d_{ij})_{ij} \ge
0\). Furthermore, suppose that there exist weights
\(\pi_1,\dots,\pi_n \ge 0\) such that the diffusion coefficients
satisfy the detailed balance condition
\begin{equation}
  \label{eq:skt-detailed-balance}
  \pi_i d_{ij} = \pi_j d_{ji}, \qquad
  \text{for } i,j=1,\dots,n,
\end{equation}
see \cite{daus-desvillettes-dietert-2019-entropic-structure} for a
discussion on the condition. Then the evolution
\eqref{eq:skt-n-species} has an entropy structure with
\begin{equation*}
  h_i(z) = \pi_i\, \big( z \log z - z + 1\big)
\end{equation*}
and dissipation
\begin{equation*}
  \alpha_i(z) = \pi_i d_{ii}.
\end{equation*}

\begin{thm}\label{thm:general-skt-asymptotic}
  Given a bounded domain \(\domain \subset \R^d\) with \(\fsC^1\)
  boundary and an increasing sequence of sets \((A_m)_{m\in\N}\) with
  \(A_m \csubset \domain\) and \(A_m \uparrow \Omega\) as
  \(m\to\infty\). Suppose that there exists a constant \(c\) and
  extension operators \(\sExt_m : W^{1,2}(A_m) \to W^{1,2}(\R^d)\)
  such that \(\|\sExt_m\|_{W^{1,2}\to W^{1,2}} \le c\) and
  \(\|\sExt_m\|_{L^p \to L^p} \le c\) for \(p=2+2/d\).

  Assume a corresponding sequence of non-negative regularisation
  kernels \((K^m)_{m\in\N}\) in \(\fsC^2_c(\domain^n)\) for \(n\)
  densities satisfying \eqref{eq:assumption-k-boundary} with weights
  \(w^m_i\), \(i=1,\dots,n\), as in
  \eqref{eq:mass-assumption-k}. Suppose that the weights always map to
  \([0,1]\) and
  \begin{equation*}
    w^m_i(x) = 1,\qquad
    \text{for } x \in A_m
    \text{ and }
    i=1,\dots,n.
  \end{equation*}
  Moreover, suppose that \(K^m\) concentrates along the diagonal,
  i.e.\
  \begin{equation*}
    K^m(x_1,\dots,x_n) = 0,
    \quad
    \text{if }
    |x_i-x_j| \ge \frac 1m
    \text{ for some }
    i,j=1,\dots,n.
  \end{equation*}

  Consider the SKT system \eqref{eq:skt-n-species} for \(n\) densities
  with constants \(d_1,\dots,d_n\ge 0\) and \((d_{ij})_{ij} \ge 0\)
  and weights \(\pi_1,\dots,\pi_n \ge 0\) satisfying
  \eqref{eq:skt-detailed-balance} and \(d_{ii} > 0\) for
  \(i=1,\dots,n\) with initial data
  \(u^\init = (u_1^\init,\dots,u_n^\init)\) with
  \begin{equation*}
    \gamma \le u_i^\init \le \gamma^{-1}
  \end{equation*}
  for \(i=1,\dots,n\) and a constant \(\gamma \in \pR\).

  Then there exists a sequence of \((\epsilon_m)_{m\in\N}\) with
  \(\epsilon_m \downarrow 0\) as \(m \to \infty\) such that the
  approximating solutions \((u^m)_m\) as constructed in
  \cref{thm:general-system-existence-regularised} converge along a
  subsequence to \(u\) in \(\fsL^q([0,T)\times \domain; \nnR^n)\) with
  \(q=2+(1/2d)\). The limit \(u\) is a non-negative weak solution to
  \eqref{eq:skt-n-species} with the no-flux boundary conditions
  satisfying the entropy-dissipation inequality, i.e.\ for
  \(\phi \in \fsC^\infty([0,T] \times \domain)\) with
  \(\phi(T,\cdot) \equiv 0\) and \(i=1,\dots,n\) it holds
  \begin{equation*}
    - \int_0^T \int_\domain u_i\, \partial_t \phi
    + \int_0^T \int_\domain
    \left(\sum_{j=1}^{n} a_{ij}(u) \nabla u_j\right) \cdot
    \nabla \phi
    = \int_{\domain} u_i^\init \phi(0,\cdot),
  \end{equation*}
  where
  \begin{equation*}
    a_{ij}(u) =
    \begin{cases}
      d_i + 2 d_{ii} u_i + \sum_{j \not = i} d_{ij} u_j & \text{if } i=j, \\
      d_{ij} u_i & \text{otherwise}.
    \end{cases}
  \end{equation*}
\end{thm}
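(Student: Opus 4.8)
The plan is to apply \cref{thm:general-system-existence-regularised} to the SKT system \eqref{eq:skt-n-species} for each $m$, with the kernel $K^m$ and a regularisation parameter $\epsilon_m \downarrow 0$ chosen slowly enough relative to $m$, and then to extract a limit from the resulting family $(u^m)_m$. The decisive structural feature is that for the SKT entropy the dissipations $\alpha_i = \pi_i d_{ii}$ are \emph{positive constants}: since $w_i^m = 1$ on $A_m$ and $w_i^m \in [0,1]$ elsewhere, the entropy-dissipation inequality \eqref{eq:approx-general:entropy} yields the $m$-uniform bounds
\begin{equation*}
  \sup_{t}\, H(u^m(t)) \le H^\init
  \qquad\text{and}\qquad
  \sum_{i=1}^n \int_0^T\!\!\int_{A_m} |\nabla u_i^m|^2 \le \frac{H^\init}{\min_i \pi_i^2 d_{ii}^2},
\end{equation*}
together with mass conservation \eqref{eq:approx-general:mass}. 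I stress that the maximum principle \eqref{eq:approx-general:max} is \emph{not} uniform in $m$, because the constant $M$ in \eqref{eq:growth-constant-m} involves $\|K^m\|_\infty,\|\nabla K^m\|_\infty,\|\nabla^2 K^m\|_\infty$, all of which blow up as $K^m$ concentrates on the diagonal; the positive self-diffusion (hence the constant dissipation) is precisely what replaces the duality estimate and furnishes usable bounds. The $\epsilon_m$-part of the dissipation is non-negative, so it may be discarded for the limiting inequality, and since it controls the Fisher information $\epsilon_m \int_0^T\!\int_\domain |\nabla u_i^m|^2/u_i^m \le H^\init/\pi_i$, Cauchy--Schwarz against the mass bound gives $\epsilon_m\|\nabla u_i^m\|_{\fsL^1(Q_T)} \lesssim \epsilon_m^{1/2}\to 0$, so the added diffusion disappears.

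Next I would upgrade $\fsL^1$-plus-gradient control into genuine super-quadratic integrability, uniformly in $m$. Here the extension operators enter: from the gradient bound, $u_i^m(t,\cdot)|_{A_m} \in W^{1,2}(A_m)$, and $\sExt_m u_i^m(t,\cdot) \in W^{1,2}(\R^d)$ has norm controlled by $c\,\|u_i^m(t,\cdot)\|_{W^{1,2}(A_m)}$ uniformly. A parabolic Gagliardo--Nirenberg interpolation on $\R^d$ between $\fsL^\infty(0,T;\fsL^1)$ (from mass) and $\fsL^2(0,T;\H^1)$ then places $\sExt_m u_i^m$ in $\fsL^{2+2/d}((0,T)\times\R^d)$ with an $m$-uniform bound, the assumed $\fsL^p\to\fsL^p$ continuity of $\sExt_m$ at $p=2+2/d$ guaranteeing the extension does not spoil it. Restricting to $A_m$ gives a uniform $\fsL^{2+2/d}(A_m)$ bound on $u_i^m$; on the vanishing complement $\domain\setminus A_m$ one only retains $\fsL^1$-equi-integrability from the entropy bound (via de la Vallée-Poussin), which suffices because $|\domain\setminus A_m|\to 0$. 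Together these make $(u_i^m)_m$ equi-integrable in $\fsL^{q}(Q_T)$ for the target $q = 2+\tfrac{1}{2d}$, a strict margin below $2+2/d$ that absorbs the errors coming from $\domain\setminus A_m$.

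To obtain strong convergence I would complement the spatial regularity with a time-derivative estimate read off from \eqref{eq:general-regularised-evolution}. Testing against functions supported in a fixed $A_{m'}\csubset\domain$, the nonlocal averaging of $a_{ij}(u^m)\nabla u_j^m$ against $K^m$ is a sub-Markovian operation (the weights $w_i^m\le 1$), hence $\fsL^p$-bounded \emph{independently} of the exploding derivatives of $K^m$; with the bounds of the previous steps and $\epsilon_m\le 1$ this shows $\partial_t u_i^m$ bounded in $\fsL^{r}(0,T;(W^{1,s}(A_{m'}))')$ for suitable $r,s$ and all $m\ge m'$. An Aubin--Lions--Simon argument on each $A_{m'}$, followed by a diagonal extraction over $m'$, then yields a subsequence with $u_i^m \to u_i$ strongly in $\fsL^2_{\mathrm{loc}}$ and almost everywhere on $Q_T$; combined with the $\fsL^{2+2/d}$-equi-integrability this upgrades to strong convergence in $\fsL^q(Q_T)$ with $q=2+\tfrac1{2d}$, while $\nabla u_i^m \weakto \nabla u_i$ weakly in $\fsL^2_{\mathrm{loc}}$. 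Non-negativity of $u_i$ follows from the (possibly degenerating but still valid) lower bound $u_i^m \ge \gamma\,\ee^{-M T} > 0$.

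Finally I would pass to the limit in the weak formulation of \eqref{eq:regularised-skt-general-k}. The diffusive flux combines a nonlocal average of $a_{ij}(u^m)\nabla u_j^m$ against $K^m$ with the corrector terms of \cref{rem:laplace-general-regularisation}; since $K^m$ is supported in $\{|x_i-x_j|\le 1/m\}$ and the weights satisfy $w_i^m \to 1$, the concentration on the diagonal collapses each nonlocal average to its local value, and the $(\partial_x K^m+\partial_y K^m)$-type correctors are handled by the same concentration. The products $a_{ij}(u^m)\nabla u_j^m$ (for SKT, schematically $u_i^m\nabla u_j^m$) converge by the strong-times-weak principle, using $a_{ij}(u^m)\to a_{ij}(u)$ strongly in $\fsL^2$ (continuity of $a_{ij}$ together with strong convergence, the $\fsL^{2+2/d}$-bound preventing concentration in the quadratic terms) against $\nabla u_j^m \weakto \nabla u_j$. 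The $\epsilon_m\lap u_i^m$ term drops out by the estimate $\epsilon_m\|\nabla u_i^m\|_{\fsL^1(Q_T)}\to 0$ from the first step. The entropy-dissipation inequality survives in the limit by weak lower semicontinuity: $H$ is convex, so $H(u(t))\le\liminf H(u^m(t))$ by Fatou, and the dissipation term is lower semicontinuous once one uses $w_i^m\to 1$ and $\nabla u_i^m\weakto\nabla u_i$. \textbf{The main obstacle} is the coordination of the three simultaneous limits --- $\epsilon_m\downarrow 0$, the diagonal concentration of $K^m$, and $A_m\uparrow\domain$ --- against the \emph{loss} of the uniform $\fsL^\infty$ bound: all $m$-uniform compactness must be squeezed out of the entropy dissipation alone, with the extension operators bridging the gap between the interior gradient control on $A_m$ and the global Sobolev embeddings, and with $\epsilon_m$ tuned slowly enough that the time-derivative bounds remain usable despite the exploding kernel norms.
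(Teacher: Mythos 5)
Your overall architecture matches the paper's: uniform Gagliardo--Nirenberg via the extension operators, interior gradient control from the constant SKT dissipation on \(A_m\) where \(w_i^m\equiv 1\), Aubin--Lions on each \(A_{m'}\) with a diagonal extraction, and a strong-times-weak limit in the nonlocal flux. Your Fisher-information estimate \(\epsilon_m\|\nabla u_i^m\|_{\fsL^1(Q_T)}\lesssim \sqrt{\epsilon_m}\) for discarding the added diffusion is correct and a nice touch. But there is a genuine gap in the step that produces the global super-quadratic bound: you claim that on \(\domain\setminus A_m\) the \(\fsL^1\)-equi-integrability coming from the entropy, together with \(|\domain\setminus A_m|\to 0\), yields \(\fsL^q(Q_T)\)-equi-integrability for \(q=2+\tfrac{1}{2d}\). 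This does not follow: an entropy (\(\fsL\log\fsL\)) bound gives no control whatsoever of \(\int_{\domain\setminus A_m}|u_i^m|^q\) for any \(q>1\) (consider tall spikes of small support concentrating in the boundary layer; their \(\fsL^1\) mass is small while their \(\fsL^q\) norm is arbitrary). Since the dissipation only controls \(\nabla u_i^m\) where \(w_i^m\) is bounded below, nothing uniform survives near the boundary, and your later steps --- the strong \(\fsL^2\) convergence of the kernel-averaged factor (which requires a uniform \(\fsL^{\tilde p}\) bound with \(\tilde p>2\) on all of \(\domain\), because the test functions of the no-flux problem do not vanish at the boundary) and the Vitali upgrade to strong \(\fsL^q(Q_T)\) convergence --- inherit the gap.

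The paper closes exactly this hole, and doing so is the true purpose of the sequence \((\epsilon_m)_m\), which you instead assign to time-derivative bounds: by the \(\epsilon\)-regularity estimate of \cref{thm:general-system-existence-regularised} one has the global bound \(\|\nabla u_i^m\|^2_{\fsL^2(Q_T)}\le \epsilon_m^{-1}\exp(\epsilon_m^{-1})\,c\), and since \(\domain\) itself has \(\fsC^1\) boundary the Gagliardo--Nirenberg argument applies on all of \(\domain\), giving a blowing-up global \(\fsL^p\) bound with \(p=2+2/d\). Stepping down to \(\tilde p=2+1/d<p\) by H\"older gains a factor \(|\domain\setminus A_m|^q\) on the boundary layer, and \(\epsilon_m\) is then chosen so slowly that \(|\domain\setminus A_m|^q\bigl(1+\epsilon_m^{-1}\exp(\epsilon_m^{-1})\bigr)^{1/p}\) stays bounded (this is \cref{thm:choice-epsilon-skt}); this quantitative coupling between \(\epsilon_m\) and \(A_m\) is what your proposal never makes precise. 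Two further points where the paper's mechanics differ usefully from your sketch: the limit is performed on the divergence-form weak formulation \eqref{eq:general-regularised-evolution}, so the corrector terms of \cref{rem:laplace-general-regularisation} (which involve the exploding derivatives of \(K^m\) and which your ``same concentration'' remark would not tame) never enter; and the gradient factor is paired as \(\sqrt{w_j^m}\,\nabla u_j^m\weakto\nabla u_j\) \emph{globally} weakly in \(\fsL^2\), against \(\psi^m:=(w_j^m)^{-1/2}\prod_{k\ne j}\int K^m u_i^m\nabla\phi\), whose uniform \(\fsL^{\tilde p}\) bound follows from Jensen's inequality with respect to the kernel normalised by the weight --- this weighting is what lets the argument reach the boundary, where your merely local weak convergence \(\nabla u_j^m\weakto\nabla u_j\) in \(\fsL^2_{\mathrm{loc}}\) would not suffice.
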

\begin{remark}
  A sequence of such sets \(A_m\) can be constructed for locally
  Lipschitz domains. For this, locally write the boundary as a graph
  of \(d-1\) variables and locally then such a sequence can be
  constructed. For the construction of extensions we refer to the
  treatment of \cite[Section 5.4]{evans-2010-pde-book} and
  \cite[Section VI]{stein-1970-singular-integrals}.
\end{remark}

\section{The convolution scheme on the flat torus}\label{sec:convo}

This section is dedicated to the proofs of
\cref{thm:existince-general-SKT} and \cref{thm:sktasym} on the
torus. For the domain, we introduce the notation
\begin{equation*}
  Q_T := [0,T) \times \T^d,
\end{equation*}
and start by recalling some useful results about the Kolmogorov
equation, that is
\begin{align}
\label{eq:kol1}  \partial_t z -\Delta(\mu z) &= G,\\
\label{eq:kol2}  z(0,\cdot) &= z^\init,
\end{align}
where $G$, $\mu$ and $z^\init$ are given and $z$ is the
unknown. Solutions will be understood in the following sense:
\begin{defi}\label{defi:kol}
  Given a measurable function $\mu:Q_T\rightarrow\R$, and two
  square-integrable functions $G,z^\init:Q_T\rightarrow\R$ we say that
  $z\in\fsL^1(Q_T)$ is a distributional solution of \eqref{eq:kol1} --
  \eqref{eq:kol2} if $z\mu$ is integrable on $Q_T$ and for all test
  function $\varphi\in\mathscr{D}(Q_T)$ it holds that
  \begin{align*}
    -\int_{Q_T} z(\partial_t \varphi+\mu\Delta\varphi)
    = \int_{\T^d} z^\init \varphi(0,\cdot) + \int_{Q_T} G\varphi.
  \end{align*}
\end{defi}

\subsection{Reminder on the Kolmogorov equation}

The following result is directly extracted from
\cite{moussa-2020-triangular}, more precisely merging results obtained
in Theorem~3, Proposition~2 and Proposition~3 therein.
\begin{thm}\label{thm:kol}
  Fix $\mu\in\fsL^\infty(Q_T)$ such that $\inf_{Q_T}\mu>0$. For any
  $z^\init\in\fsL^2(\T^d)$ there exists a unique solution $z$ to
  \eqref{eq:kol1} -- \eqref{eq:kol2} in the sense of
  Definition~\ref{defi:kol}. This solution belongs to $\fsL^2(Q_T)$ and
  satisfies
  \begin{itemize}
  \item \emph{maximum principle}: if $G$ and $z^\init$ are non-negative, then so is $z$;
  \item \emph{duality estimate}: $\mu^{1/2}z \in \fsL^2(Q_T)$ and
    \begin{align*}
      \int_{Q_T}  \mu z^2 \lesssim_d \left(1+\int_{Q_T}\mu\right)\left(\int_{\T^d} (z^\init)^2 + T\int_{Q_T} G^2\right),
    \end{align*}
    where the constant behind $\lesssim_d$ depends only on the dimension;
  \item \emph{sequential stability}: for fixed $G$ and $z^\init$ as
    above, the map $\mu \mapsto z$, restricted to those $\mu$ who are
    bounded and positively lower-bounded, is continuous in the
    $\fsL^1(Q_T)$ topology for the argument \(\mu\) and the $\fsL^2(Q_T)$
    topology for the image \(z\).
  \end{itemize}
\end{thm}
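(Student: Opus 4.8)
The plan is to note first that the statement merely collects three results already proved in \cite{moussa-2020-triangular} (Theorem~3, Proposition~2, Proposition~3), so the intended ``proof'' is to quote them; for completeness I indicate the duality method that underlies all three. The backbone is the \emph{backward} (adjoint) problem: for a source $\Theta\in\fsL^2(Q_T)$ I would solve
\begin{equation*}
  -\partial_t w - \mu\,\lap w = \Theta,\qquad w(T,\cdot)=0.
\end{equation*}
Since $\mu$ is bounded and bounded below, this is a uniformly parabolic backward equation. Testing it against $-\lap w$ and using $\inf_{Q_T}\mu>0$ yields the a priori bounds $\int_{Q_T}\mu(\lap w)^2 \lesssim \int_{Q_T}\Theta^2/\mu$ and $\|\nabla w(0,\cdot)\|_{\fsL^2}^2\lesssim\int_{Q_T}\Theta^2/\mu$. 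Pairing the equation for $z$ with $w$ and integrating by parts on $\T^d$ (where no boundary terms arise) produces the duality identity
\begin{equation*}
  \int_{Q_T} z\,\Theta = \int_{\T^d} z^\init\,w(0,\cdot) + \int_{Q_T} G\,w,
\end{equation*}
which both characterises $z$ and, on choosing $\Theta=\mu z$, gives the stated duality estimate after bounding the right-hand side by the $w$-estimates above.

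For existence I would regularise, replacing $\mu$ by a smooth, uniformly positive $\mu_\delta\to\mu$ that keeps the bounds $\inf_{Q_T}\mu\le\mu_\delta\le\sup_{Q_T}\mu$, solve the resulting uniformly parabolic problem by classical theory, and pass to the limit: the duality estimate is uniform in $\delta$, so the family $z_\delta$ is bounded in $\fsL^2(Q_T)$ and a weak limit is a distributional solution in the sense of \cref{defi:kol}. Uniqueness is then immediate from the duality identity: if $G=0$ and $z^\init=0$ then $\int_{Q_T} z\,\Theta=0$ for every $\Theta\in\fsL^2(Q_T)$, whence $z=0$. For the maximum principle, I would use that when $\Theta\ge 0$ the backward problem has $w\ge 0$ (and $w(0,\cdot)\ge 0$) by its own classical maximum principle; so if $G,z^\init\ge 0$ the duality identity forces $\int_{Q_T} z\,\Theta\ge 0$ for all non-negative $\Theta$, i.e.\ $z\ge 0$.

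The delicate point, which I expect to be the main obstacle, is the sequential stability of $\mu\mapsto z$ from the $\fsL^1(Q_T)$ topology on $\mu$ to the $\fsL^2(Q_T)$ topology on $z$. For a sequence $\mu_n\to\mu$ in $\fsL^1$ with uniform bounds $0<\inf\mu_n$ and $\sup\mu_n<\infty$, the uniform duality estimate bounds the associated $z_n$ in $\fsL^2(Q_T)$, so $z_n\weakto z$ along a subsequence. The difficulty is to pass to the limit in the product $\mu_n z_n$ appearing in $\lap(\mu_n z_n)$, where only $\mu_n\to\mu$ in $\fsL^1$ and $z_n\weakto z$ \emph{weakly} are at hand, so neither factor is known to converge strongly a priori. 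The resolution is to exploit the equation itself: the duality framework controls $\mu_n z_n$ and supplies, through a weak–strong (averaging) argument, the compactness needed to identify the limit of $\mu_n z_n$ as $\mu z$ and to upgrade $z_n\to z$ to strong convergence in $\fsL^2(Q_T)$. This is precisely the content of Proposition~3 of \cite{moussa-2020-triangular}, which I would invoke rather than reprove.
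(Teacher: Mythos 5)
Your proposal matches the paper exactly: the paper gives no proof of this theorem at all, stating only that it is ``directly extracted'' from Theorem~3, Proposition~2 and Proposition~3 of \cite{moussa-2020-triangular}, which is precisely your primary move of quoting those results. Your supplementary sketch of the underlying duality method (backward adjoint problem, testing against $-\lap w$, the identity $\int_{Q_T} z\,\Theta = \int_{\T^d} z^\init w(0,\cdot) + \int_{Q_T} G\,w$ with $\Theta=\mu z$, and sign propagation through the dual problem) is faithful to that reference — the only detail glossed over is the control of the spatial mean of $w$ on the torus, which is where the factor $1+\int_{Q_T}\mu$ in the duality estimate originates — but since both you and the paper defer the actual proofs to the citation, nothing further is required.
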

We will use two corollaries of the previous theorem.
\begin{cor}\label{cor:kol1}
  Consider the assumptions of \cref{thm:kol}, with $G=0$. If
  furthermore $z^\init$ is bounded with
  $\gamma\leq z^\init\leq \gamma^{-1}$ for some positive constant
  $\gamma$ and if $\Delta \mu\in\fsL^1(0,T;\fsL^\infty(\T^d))$, then
  $z\in\fsL^\infty(Q_T)$ with the estimate
  \begin{align*}
    \gamma \exp\left(-\int_0^T \|(\Delta \mu)^-(s)\|_{\fsL^\infty(\T^d)}\dd s\right)
    \leq z \leq
    \gamma^{-1}\exp\left(\int_0^T\|(\Delta \mu)^+(s)\|_{\fsL^\infty(\T^d)}\,\dd s\right),
  \end{align*}
  where the exponents ${}^+$ and ${}^-$ refer to (respectively) the
  positive and negative parts.
\end{cor}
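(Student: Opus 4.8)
The plan is to sandwich $z$ between two purely time-dependent barriers and to reduce the two-sided bound to the comparison principle contained in the maximum principle of \cref{thm:kol}. I introduce the candidates
\begin{equation*}
  \bar z(t) := \gamma^{-1}\exp\Big(\int_0^t\|(\lap\mu)^+(s)\|_{\fsL^\infty(\T^d)}\,\dd s\Big),
  \qquad
  \underline z(t) := \gamma\exp\Big({-}\int_0^t\|(\lap\mu)^-(s)\|_{\fsL^\infty(\T^d)}\,\dd s\Big).
\end{equation*}
Since these depend on $t$ only, one has $\lap(\mu\bar z)=\bar z\,\lap\mu$ in the distributional sense, and a direct computation gives $\partial_t\bar z-\lap(\mu\bar z)=\bar z\,\big(\|(\lap\mu)^+\|_{\fsL^\infty(\T^d)}-\lap\mu\big)\ge 0$ and, likewise, $\partial_t\underline z-\lap(\mu\underline z)=\underline z\,\big({-}\|(\lap\mu)^-\|_{\fsL^\infty(\T^d)}-\lap\mu\big)\le 0$. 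Thus $\bar z$ is a supersolution and $\underline z$ a subsolution, and by construction $\underline z(0)=\gamma\le z^\init\le\gamma^{-1}=\bar z(0)$. As $\bar z$ is nondecreasing and $\underline z$ nonincreasing, the stated time-uniform bounds follow from $\underline z\le z\le\bar z$ on $Q_T$, once the comparison is justified.

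To run the comparison I would set $w:=\bar z-z$. By linearity and the uniqueness in \cref{thm:kol}, $w$ is the distributional solution (in the sense of \cref{defi:kol}) associated with the nonnegative source $\bar G:=\partial_t\bar z-\lap(\mu\bar z)\ge 0$ and the nonnegative datum $w(0)=\gamma^{-1}-z^\init\ge 0$; the maximum principle then yields $w\ge 0$, i.e.\ $z\le\bar z$. Applying the same argument to $z-\underline z$, whose source $-(\partial_t\underline z-\lap(\mu\underline z))$ and datum $z^\init-\gamma$ are again nonnegative, gives $z\ge\underline z$, completing the bracketing.

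The one point where this is so far only formal is the regularity of the source: $\bar G=\bar z\,\big(\|(\lap\mu)^+\|_{\fsL^\infty(\T^d)}-\lap\mu\big)$ inherits from $\lap\mu\in\fsL^1(0,T;\fsL^\infty(\T^d))$ merely an $\fsL^1$-in-time control, whereas the maximum principle in \cref{thm:kol} is stated for square-integrable sources. Removing this mismatch by approximation is the technical heart of the proof. I would mollify $\mu$ in time, $\mu_k:=\mu\conv_t\zeta_k$ with $\zeta_k$ a nonnegative approximate identity of mass one: this preserves $\inf_{Q_T}\mu\le\mu_k\le\|\mu\|_{\fsL^\infty(Q_T)}$ and gives $\mu_k\to\mu$ in $\fsL^1(Q_T)$. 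Since the convolution of the $\fsL^1(0,T)$ function $t\mapsto\|\lap\mu(t)\|_{\fsL^\infty(\T^d)}$ with $\zeta_k$ is bounded, one gets $\lap\mu_k\in\fsL^\infty(0,T;\fsL^\infty(\T^d))$, so the corresponding source is square-integrable and the comparison above applies to $z_k$. Crucially, the convexity of $x\mapsto x^{\pm}$ together with the positivity of $\zeta_k$ yields the pointwise bound $\|(\lap\mu_k)^{\pm}(t)\|_{\fsL^\infty(\T^d)}\le\big(\|(\lap\mu)^{\pm}(\cdot)\|_{\fsL^\infty(\T^d)}\conv\zeta_k\big)(t)$, whence $\int_0^T\|(\lap\mu_k)^{\pm}\|_{\fsL^\infty(\T^d)}\le\int_0^T\|(\lap\mu)^{\pm}\|_{\fsL^\infty(\T^d)}+o(1)$.

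Consequently each $z_k$ obeys the desired estimate with exponents no larger than those built from $\mu$, uniformly in $k$. Finally I would invoke the sequential stability in \cref{thm:kol}: with $G=0$ and $z^\init$ fixed, $\mu_k\to\mu$ in $\fsL^1(Q_T)$ forces $z_k\to z$ in $\fsL^2(Q_T)$, hence almost everywhere along a subsequence, so the uniform pointwise bounds pass to the limit and give the claimed $\fsL^\infty(Q_T)$ estimate for $z$. The delicate step is precisely this approximation, where one must simultaneously restore the $\fsL^2$ integrability of the source and keep the weight exponents $\int_0^T\|(\lap\mu)^{\pm}\|_{\fsL^\infty(\T^d)}$ under control.
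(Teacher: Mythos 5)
Your proof is correct, and its core is exactly the paper's argument: the paper sets $\Phi(t):=\gamma^{-1}\exp\bigl(\int_0^t\|(\lap\mu)^+(s)\|_{\fsL^\infty(\T^d)}\,\dd s\bigr)-z$ and $\Psi(t):=z-\gamma\exp\bigl(-\int_0^t\|(\lap\mu)^-(s)\|_{\fsL^\infty(\T^d)}\,\dd s\bigr)$, computes your super/subsolution identities, e.g.\ $\partial_t\Phi-\lap(\mu\Phi)=(\Phi+z)\bigl(\|(\lap\mu)^+(t)\|_{\fsL^\infty(\T^d)}-\lap\mu\bigr)\ge 0$, and concludes in one line from the maximum principle of \cref{thm:kol}, since $\Phi$ and $\Psi$ are nonnegative at $t=0$. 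Where you genuinely add something is the regularity bookkeeping that the paper glosses over: under the stated hypothesis $\lap\mu\in\fsL^1(0,T;\fsL^\infty(\T^d))$, the source $(\Phi+z)\bigl(\|(\lap\mu)^+\|_{\fsL^\infty}-\lap\mu\bigr)$ is a priori only $\fsL^1$ in time, whereas \cref{defi:kol} and \cref{thm:kol} are formulated for square-integrable sources, so the paper's direct appeal to the maximum principle is formally out of scope of its own framework. Your repair — mollify $\mu$ in time, use convexity of $x\mapsto x^{\pm}$ to get $\|(\lap\mu_k)^{\pm}(t)\|_{\fsL^\infty}\le\bigl(\|(\lap\mu)^{\pm}\|_{\fsL^\infty}\conv\zeta_k\bigr)(t)$ so the exponents stay controlled, run the comparison for each $z_k$, and pass to the limit via the sequential-stability clause of \cref{thm:kol} — closes this gap, and notably mirrors the approximation strategy the paper itself deploys in the proof of \cref{cor:kol2}. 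Two small points to make it airtight: mollifying in time requires extending $\mu$ beyond $[0,T]$ in a way that preserves the positive lower bound and the $o(1)$ control of the exponents (reflection or constant extension works; extension by zero does not), and the identification of $\bar z-z_k$ as \emph{the} distributional solution with nonnegative source should be spelled out by checking that the space-independent barrier is itself a solution in the sense of \cref{defi:kol} (immediate, since $\lap(\mu_k\bar z)=\bar z\,\lap\mu_k$ is a genuine function) and then invoking linearity plus the uniqueness of \cref{thm:kol}, which you do indicate. In the paper's actual applications one has $\lap\mu\in\fsL^\infty(Q_T)$, so the shortcut is harmless there, but your version establishes the corollary at the stated level of generality.
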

\begin{proof}
  Define
  \begin{align*}
    \Phi(t) &:= \gamma^{-1} \exp\left(\int_0^t \|(\Delta \mu)^+(s)\|_{\fsL^\infty(\T^d)}\,\dd s\right)-z,\\
    \Psi(t) &:= z-\gamma \exp\left(-\int_0^t \|(\Delta \mu)^-(s)\|_{\fsL^\infty(\T^d)}\,\dd s\right),
  \end{align*}
  which satisfy
  \begin{align*}
    (\partial_t \Phi -\Delta(\mu \Phi))(t,x) &= (\Phi+z)(t,x)(\|(\Delta \mu)^+(t)\|_{\fsL^\infty(\T^d)}-\Delta \mu(t,x))\geq 0, \\
    (\partial_t \Psi -\Delta(\mu \Psi))(t,x) &= (z-\Psi)(t,x)(\Delta \mu(t,x) + \|(\Delta \mu)^-(t)\|_{\fsL^\infty(\T^d)})\geq 0.
  \end{align*}
  The conclusion follows using the maximum principle of
  Theorem~\ref{thm:kol}, since $\Phi$ and $\Psi$ are initially
  non-negative.
\end{proof}
\begin{cor}\label{cor:kol2}
  Consider the assumptions of Theorem~\ref{thm:kol}, with $G=0$. If
  furthermore $\Delta \mu\in\fsL^1(0,T;\fsL^\infty(Q_T))$, then
  $z\in\fsL^\infty(0,T;\fsL^2(\T^d))\cap\fsL^2(0,T;\H^1(\T^d))$ with the
  following estimate for a.e. $t\in[0,T]$
  \begin{align}\label{ineq:cor2}
    \int_{\T^d} z(t)^2 +\int_0^t \int_{\T^d} \mu |\nabla z|^2  \leq \exp \left(\int_0^t \|(\Delta \mu)^+(s)\|_{\fsL^\infty(\T^d)}\,\dd s \right) \int_{\T^d} (z^\init)^2.
  \end{align}
\end{cor}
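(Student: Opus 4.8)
The plan is to read \eqref{ineq:cor2} as the energy identity for \(\partial_t z = \lap(\mu z)\) and to establish it first for smooth, uniformly elliptic coefficients, where the computation is licit, and then transfer it to the general case by the sequential stability of \cref{thm:kol}. Formally, multiplying \eqref{eq:kol1} (with \(G=0\)) by \(z\) and integrating over \(\T^d\), two integrations by parts (no boundary terms on the torus, and using \(\int_{\T^d} z\,\nabla\mu\cdot\nabla z = \tfrac12\int_{\T^d}\nabla\mu\cdot\nabla(z^2) = -\tfrac12\int_{\T^d} z^2\lap\mu\)) give
\begin{equation*}
  \frac12 \frac{\dd}{\dd t} \int_{\T^d} z^2 + \int_{\T^d} \mu\, |\nabla z|^2 = \frac12 \int_{\T^d} z^2\, \lap\mu,
\end{equation*}
whose right-hand side is \(\le \tfrac12\|(\lap\mu)^+(t)\|_{\fsL^\infty(\T^d)}\int_{\T^d} z^2\), so that a Grönwall argument produces \eqref{ineq:cor2}. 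The obstruction is that the solution furnished by \cref{thm:kol} is a priori only in \(\fsL^2(Q_T)\): neither \(\nabla z\) nor \(\partial_t z\) is controlled, so the integrations by parts and the duality pairing are not yet justified.

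To make this rigorous I would mollify \(\mu\) in space, \(\mu_n(t,\cdot) := \mu(t,\cdot)\conv\eta_n\) for a smooth nonnegative kernel \(\eta_n\) of mass one. Then \(\mu_n\) is smooth in \(x\), still satisfies \(\inf_{Q_T}\mu_n \ge \inf_{Q_T}\mu =: \delta > 0\) and \(\|\mu_n\|_{\fsL^\infty}\le\|\mu\|_{\fsL^\infty}\), converges to \(\mu\) in \(\fsL^1(Q_T)\) (and, along a subsequence, a.e.), and, since \(\lap\mu_n = (\lap\mu)\conv\eta_n\) with \(\eta_n\ge 0\), one has the pointwise bound \((\lap\mu_n)^+ \le (\lap\mu)^+\conv\eta_n\), whence \(\|(\lap\mu_n)^+(t)\|_{\fsL^\infty(\T^d)}\le\|(\lap\mu)^+(t)\|_{\fsL^\infty(\T^d)}\) by Young's inequality. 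Let \(z_n\) be the solution of \eqref{eq:kol1}--\eqref{eq:kol2} with coefficient \(\mu_n\), the same datum \(z^\init\) and \(G=0\) given by \cref{thm:kol}. Writing the equation in divergence form \(\partial_t z_n = \divergence(\mu_n\nabla z_n + z_n\nabla\mu_n)\), with smooth uniformly elliptic principal part and bounded drift \(\nabla\mu_n\), the standard Lions theory gives \(z_n\in\fsC^0([0,T];\fsL^2(\T^d))\cap\fsL^2(0,T;\H^1(\T^d))\) with \(\partial_t z_n\in\fsL^2(0,T;\H^1(\T^d)')\); by uniqueness in \cref{thm:kol} this energy solution is the distributional one. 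For such \(z_n\) the pairing \(\langle\partial_t z_n,z_n\rangle = \tfrac12\tfrac{\dd}{\dd t}\|z_n\|_{\fsL^2}^2\) and the integrations by parts above are valid; introducing the integrating factor \(\exp(-\int_0^t\|(\lap\mu_n)^+\|_{\fsL^\infty})\) and integrating \emph{keeps} the dissipation, so that, using \(\|(\lap\mu_n)^+\|_{\fsL^\infty}\le\|(\lap\mu)^+\|_{\fsL^\infty}\), we obtain uniformly in \(n\)
\begin{equation*}
  \int_{\T^d} z_n(t)^2 + 2\int_0^t\!\!\int_{\T^d}\mu_n|\nabla z_n|^2 \le \exp\left(\int_0^t \|(\lap\mu)^+(s)\|_{\fsL^\infty(\T^d)}\,\dd s\right)\int_{\T^d}(z^\init)^2.
\end{equation*}

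It remains to pass to the limit \(n\to\infty\). Sequential stability in \cref{thm:kol} gives \(z_n\to z\) in \(\fsL^2(Q_T)\). Since \(\mu_n\ge\delta>0\), the uniform estimate bounds \(\nabla z_n\) in \(\fsL^2(Q_T)\), so \(\nabla z_n\weakto g\) weakly along a subsequence; testing against smooth functions and using \(z_n\to z\) in \(\fsL^2\) identifies \(g=\nabla z\), hence \(z\in\fsL^2(0,T;\H^1(\T^d))\). The delicate point — and the main obstacle — is the lower semicontinuity of the \emph{weighted} dissipation, where one must keep the true weight \(\mu\) rather than merely the lower bound \(\delta\). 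For this I would note \(\mu_n^{1/2}\to\mu^{1/2}\) in \(\fsL^2(Q_T)\) (a.e.\ convergence and the uniform \(\fsL^\infty\) bound, by dominated convergence), so that for every \(\varphi\in\fsL^2(Q_T)\), \(\int\mu_n^{1/2}\nabla z_n\,\varphi = \int\nabla z_n\,(\mu_n^{1/2}\varphi)\to\int\nabla z\,(\mu^{1/2}\varphi)\); that is, \(\mu_n^{1/2}\nabla z_n\weakto\mu^{1/2}\nabla z\) weakly in \(\fsL^2\), and weak lower semicontinuity of the norm yields \(\int_0^t\!\int\mu|\nabla z|^2\le\liminf_n\int_0^t\!\int\mu_n|\nabla z_n|^2\). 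Likewise \(z_n(t)\weakto z(t)\) in \(\fsL^2(\T^d)\) for a.e.\ \(t\) (along a further subsequence, using the uniform \(\fsL^\infty(0,T;\fsL^2)\) bound), so \(\int_{\T^d} z(t)^2\le\liminf_n\int_{\T^d} z_n(t)^2\). Combining these two lower bounds with the uniform estimate gives \eqref{ineq:cor2} for a.e.\ \(t\) (indeed with the factor \(2\) on the dissipation, so a fortiori with the stated factor \(1\)), and the \(\fsL^\infty(0,T;\fsL^2)\) membership follows at once.
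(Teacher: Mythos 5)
Your proposal is correct and follows essentially the same route as the paper's proof: establish the energy identity with the integrating factor rigorously for regularised coefficients, approximate a general $\mu$ by smooth ones preserving the bound on $(\Delta\mu)^+$, and conclude via the sequential stability of \cref{thm:kol} together with weak lower semicontinuity. Your write-up merely makes explicit what the paper leaves implicit (the positive mollifier giving $\|(\Delta\mu_n)^+(t)\|_{\fsL^\infty}\le\|(\Delta\mu)^+(t)\|_{\fsL^\infty}$, the Lions energy framework for the smooth case, and the identification $\mu_n^{1/2}\nabla z_n\weakto\mu^{1/2}\nabla z$ for the weighted dissipation), which is a welcome refinement rather than a different argument.
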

\begin{proof}
  We first assume that $\mu$ and the initial data are smooth. In that
  case, we can rewrite the Kolmogorov equation \eqref{eq:kol1} as
  standard parabolic equation, and we get the smoothness of the
  solution $z$. In this situation, we can rigorously multiply the
  equation by $z$ and integrating by parts, to get
  \begin{align*}
    \frac12 \frac{\dd}{\dd t} \int_{\T^d} z(t)^2 + \int_{\T^d}
    \mu(t)|\nabla z(t)|^2
    &= - \int_{\T^d} z(t) \nabla z(t) \cdot \nabla \mu(t) \\
    &= \frac12 \int_{\T^d} z(t)^2 \Delta \mu(t)
      \leq \frac12 \|(\Delta \mu)^+(t)\|_{\fsL^\infty(\T^d)}\int_{\T^d} z(t)^2.
  \end{align*}
  We have thus
  \begin{multline*}
    \frac12 \frac{\dd}{\dd t} \left\{\exp\left(-\int_0^t \|(\Delta \mu)^+(s)\|_{\fsL^\infty(\T^d)}\,\dd s\right)\int_{\T^d} z(t)^2\right\}\\ + \exp\left(-\int_0^t \|(\Delta \mu)^+(s)\|_{\fsL^\infty(\T^d)}\,\dd s  \right)\int_{\T^d} \mu(t)|\nabla z(t)|^2\leq 0,
  \end{multline*}
  and we infer after time integration the stated estimate. For the
  moment, we only established the estimate in the case of smooth
  data. Replacing $\mu$ and $z^\init$ by smooth approximations
  $(\mu_n)_n$ and $(z^\init_n)_n$, approaching them in $\fsL^1(Q_T)$ and
  $\fsL^2(\T^d)$ respectively, with furthermore
  $\|(\Delta \mu_n)^+\|_{\fsL^\infty(Q_T)} \leq \|(\Delta
  \mu)^+\|_{\fsL^\infty(Q_T)}$, we get a sequence $(z_n)_n$ which, by
  the sequential stability of Theorem~\ref{thm:kol}, approaches $z$ in
  $\fsL^2(Q_T)$. The usual semi-continuity argument for weak convergence
  allows to obtain that
  $z\in \fsL^\infty(0,T;\fsL^2(\T^d))\cap\fsL^2(0,T;\H^1(\T^d))$, with the
  estimate \eqref{ineq:cor2} being satisfied for a.e. $t$.
\end{proof}
\subsection{Proof of \cref{thm:existince-general-SKT}}\label{subsec:exgenSKT}
\begin{proof}
  We start by proving the four \emph{a priori} estimates, under the
  assumption of positivity and regularity \eqref{reg:ui}.
  \begin{itemize}
  \item \textsf{\emph{conservation of the mass:}} since
    $u_i\in\fsC^0([0,T];\fsL^2(\T^d))$, it also belongs to
    $\fsC^0([0,T];\fsL^1(\T^d))$ and this is sufficient (\emph{via} a
    density argument) to use $\mathbf{1}_{\T^d}$ as test function
    which allows to recover \eqref{ineq:mass}.
  \item \textsf{\emph{entropy estimate:}} the $h_i$'s and $\mu_i$'s
    are locally Lipschitz, so boundedness of the $u_i$'s and their
    belonging to $\fsC^0([0,T];\fsL^2(\T^d))$ imply
    $h_i(u_i),\mu_i(u_i)\in\fsC^0([0,T];\fsL^1(\T^d))$, for
    $i=1,2$. With the same type of arguments we recover
    $h_i'(u_i) \in \fsL^2(0,T;\H^1(\T^d))$. This is sufficient to
    justify the following formula for all $t\in[0,T]$, by density of
    smooth functions,
    \begin{align*}
      \int_0^t \int_{\T^d} h_i'(u_i)\partial_t u_i = \int_{\T^d} h_i(u_i(t)) - \int_{\T^d} h_i(u_i^\init).
    \end{align*}
    Similarly, we have that (with the analogous formula for the other species)
    \begin{align*}
      -\int_0^t \int_{\T^d} h_1'(u_1) \lap\big(
      \big[\mu_1(u_2)\star\rho\big] u_1\big)
      = \int_0^t \int_{\T^d} h_1''(u_1) \nabla u_1 \cdot \nabla \big(\big[\mu_1(u_2)\star\rho\big] u_1\big),
    \end{align*}
    which is sufficient to reproduce rigorously the computation done
    in the proof of \cref{prop:formal} and integrate it time to get
    \eqref{ineq:ent}.
  \item \textsf{\emph{maximum principle:}} we have (using
    assumption~\eqref{ineq:assmu} and the entropy estimate)
    \begin{align*}
      \int_0^T
      \|\Delta\left(\mu_1(u_2)\conv\rho\right)(s)\|_{\fsL^\infty(\T^d)}
      \,\dd s
      &\leq  \|\mu_1(u_2)\|_{\fsL^1(Q_T)} \|\Delta\rho\|_{\fsL^\infty(\T^d)}\\
      &\leq \textnormal{A}T(1+ H^\init)\|\Delta\rho\|_{\fsL^\infty(\T^d)}
    \end{align*}
    and likewise for \(\lap(\mu_2(u_1)\conv \check{\rho})\) so that
    \eqref{ineq:max} follows directly from \cref{cor:kol1}.
  \item \textsf{\emph{duality estimate:}} the function $z:=u_1+u_2$ solves the following Kolmogorov equation
    \begin{align*}
      \partial_t z-\Delta(\mu z) &= 0,\\
      z(0,\cdot) &= u_1^\init+u_2^\init,
    \end{align*}
    with
    \begin{equation*}
      \mu:=\frac{(\mu_1(u_2)\star\rho) u_1
        +(\mu_2(u_1)\star\check{\rho}) u_2}{u_1+u_2},
    \end{equation*}
    where $\mu$ is well-defined thanks to the positivity of the $u_i$'s
    and furthermore bounded. The duality estimate of
    \cref{thm:kol} implies
    \begin{multline}\label{ineq:dualmost}
      \int_{Q_T} \big(\big[\mu_1(u_2)\star\rho\big] u_1
      + \big[\mu_2(u_1)\star\check{\rho}\big] u_2\big)(u_1+u_2) \\
      \lesssim_d \left(1+\int_{Q_T} \mu\right)\left(\int_{\T^d} (u_1^\init)^2 + \int_{\T^d} (u_2^\init)^2\right).
    \end{multline}
    To recover \eqref{ineq:dua}, simply notice that
    $\mu\leq \mu_1(u_2)\star\rho + \mu_2(u_1)\star\check{\rho}$ so that
    using the normalization of $\rho$ and assumption \eqref{ineq:assmu},
    \begin{align*}
      \int_{Q_T} \mu \leq \int_{Q_T} \mu_1(u_2) + \int_{Q_T} \mu_2(u_1)
      \leq \textnormal{A} \left(\int_{Q_T}(2+h_1(u_1)+h_2(u_2))\right)
      \leq 2\textnormal{A}B_{T,\init},
    \end{align*}
    where we used the entropy estimate and the constant
    $\textnormal{B}_{T,\init}:=T(1+H^\init)$.
  \end{itemize}

  These estimates have been proven for a positive solution with
  regularity \eqref{reg:ui} whose existence has been assumed. We now
  construct a solution by a fixed-point argument.

  On the set \(E := \fsL^1(Q_T) \times \fsL^1(Q_T)\) we define the map
  \(\Theta : E \to E\) which sends \((u_1,u_2)\) to the solutions
  \((u_1^\bullet,u_2^\bullet)\) (in the sense of \cref{defi:kol}) of
  \begin{equation*}
    \left\{
      \begin{lgathered}
        \partial_t u_1^{\bullet} = \lap\big(
        \big[\mu_1(u_2^+\wedge M)\star\rho\big] u_1^{\bullet}\big), \\
        \partial_t u_2^{\bullet} = \lap\big(
        \big[\mu_2(u_1^+\wedge M)\star\check{\rho}\big] u_2^{\bullet}\big),
      \end{lgathered}
    \right.
  \end{equation*}
  where the cutoff constant $M>0$ will be fixed later on. By
  continuity of $\mu_1$ and $\mu_2$ we have
  \begin{equation}
    \label{eq:torus:bound-mu}
    \max(\mu_1(x),\mu_2(x)) \le C,\qquad
    \forall x \in [0,M].
  \end{equation}
  In particular, \cref{thm:kol} applies and ensures that the previous
  map is well-defined. Moreover, \eqref{eq:torus:bound-mu} implies
  for $(u_1,u_2)\in E$ that
  \begin{align*}
    |\Delta\big(\mu_1(u_2^+\wedge M)\star \rho\big)| = |\mu_1(u_2^+ \wedge M)\star \Delta \rho| \leq C \|\Delta \rho\|_{\fsL^1(\T^d)},
  \end{align*}
  and likewise for \(\Delta\big(\mu_2(u_1^+\wedge M)\star
  \rho\big)\). Hence we infer from Corollaries \ref{cor:kol1} and
  \ref{cor:kol2} that the images \(u_1^\bullet\) and \(u_2^\bullet\)
  are non-negative and uniformly bounded in \(\fsL^\infty(Q_T)\) and
  \(\fsL^2(0,T;\H^1(\T^d))\). Moreover by the equations, the time
  derivatives are also uniformly bounded in
  \(\fsL^2(0,T;\H^{-1}(\T^d))\). That means that there exists a
  constant \(c\) such that
  \begin{align*}
    \Theta(E) \subset K
    : = \Big\{ &(v_1,v_2) \in E : \\
               &\max( \| v_i \|_{\fsL^\infty(Q_T)},
                 \| v_i \|_{\fsL^2(0,T;\H^1(\T^d))},
                 \| \partial_t v_i \|_{\fsL^2(0,T;\H^{-1}(\T^d))})
                 \le c
                 \text{ for } i=1,2
                 \Big\}.
  \end{align*}
  Then by the Aubin-Lions lemma the convex set $K$ is also compact in
  $E$. Hence Schauder's fixed-point theorem applies and ensures that
  there exists a fixed-point \((u_1,u_2)\in K\), solving therefore
  (the $u_i$'s are non-negative)
  \begin{equation}
    \label{eq:torus:existence-mapping}
    \left\{
      \begin{lgathered}
        \partial_t u_1 = \lap\big(
        \big[\mu_1(u_2\wedge M)\star\rho\big] u_1\big), \\
        \partial_t u_2 = \lap\big(
        \big[\mu_2(u_1\wedge M)\star\check{\rho}\big] u_2\big).
      \end{lgathered}
    \right.
  \end{equation}
  The bounds obtained for elements in \(K\) also ensure that \(u_1\) and \(u_2\) both belong to
  \(\fsC^0([0,T];\fsL^2(\T^d))\) so that we have the required
  regularity \eqref{reg:ui}. In order to conclude we just need to fix
  a constant $M$ such that the corresponding saturation vanishes. For
  this purpose, we consider
  \begin{equation*}
    M = 2 \gamma^{-1} \exp(\textnormal{AB}_{T,\init} \| \lap \rho \|_\infty),
  \end{equation*}
  where the constants $\textnormal{A}$ and $\textnormal{B}_{T,\init}$
  are defined in the statement of \cref{thm:existince-general-SKT} and
  we recall that $\gamma>0$ is such that
  \begin{align*}
    \gamma\leq u_i^\init\leq \gamma^{-1}.
  \end{align*}
  We now define
  \begin{equation*}
    t^\star := \sup \Big\{ t \in [0,T] :
    \max_{i=1,2}\| u_i \|_{\fsL^\infty([0,t]\times \T^d)} \le \frac{3M}{4} \Big\}.
  \end{equation*}
  By \cref{cor:kol1}, we have \(t^\star >0\) and up to any
  \(t \in (0,t^\star)\) the cutoff \(M\) has been irrelevant. Thus, for $t\in(0,t^\star)$ all the a priori estimates apply and in particular the entropy estimate which implies
  \begin{equation*}
    \| \mu_1(u_2) \|_{\fsL^1([0,t]\times \T^d)}
    \le \textnormal{A}\int_{0}^t \big(1+\int_{\T^d} h_2(u_2)\big)
    \leq \textnormal{A}t (1+H^\init)
    \leq \textnormal{AB}_{T,\init},
  \end{equation*}
  with a similar estimate for the other species.  This in turn implies
  by \cref{cor:kol1} that for $t<t^\star$
  \begin{equation*}
    \max_{i=1,2}  \| u_i \|_{\fsL^\infty([0,t]\times \T^d)} \le \frac{M}{2},
  \end{equation*}
  which proves that $t^\star=T$ by the usual continuity argument and
  our fixed-point $(u_1,u_2)$ is the required solution.
\end{proof}

\subsection{From non-local to local SKT}
We start with a compactness tool already used in
\cite{lepoutre-moussa-2017-entropic-structure-duality-multiple-species}
that we adapt slightly to our setting. The proofs are only included
for the reader's convenience.
\begin{lemma}\label{lem:com}
  Fix $\alpha:\pR \rightarrow \nnR$ having a negligible set of
  zeros. Consider a sequence of positive functions
  $(w_n)_n\in \textnormal{W}^{1,1}(Q_T)$ such that
  \begin{enumerate}[label={(\roman*)}]
  \item $(w_n)_n$  bounded in $\fsL^2(Q_T)$;
  \item $(\partial_t w_n)_n$ bounded in $\fsL^1(0,T;\H^{-m}(\mathbb{T}^d))$ for some integer $m$;
  \item $(\alpha(w_n)\nabla w_n)_n$ bounded in $\fsL^2(Q_T)$.
  \end{enumerate}
  Then $(w_n)_n$ admits an a.e. converging subsequence.
\end{lemma}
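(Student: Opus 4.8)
The plan is to prove a.e.\ convergence of $(w_n)_n$ by recasting the three hypotheses into the Riesz--Fr\'echet--Kolmogorov criterion (uniform smallness of space and time translations, plus tightness) for a suitable \emph{nonlinear} transform of $w_n$, and then inverting the transform. Concretely, I would introduce the primitive $\Gamma(s):=\int_0^s\min(\alpha(\sigma),1)\,\dd\sigma$. Since $\alpha\ge 0$ vanishes only on a negligible set, $\min(\alpha,1)>0$ a.e., so $\Gamma$ is a continuous, strictly increasing bijection of $\nnR$ onto $[0,\Gamma(+\infty))$ with continuous inverse. Truncating $\alpha$ at $1$ makes $\Gamma$ be $1$-Lipschitz, so $\Gamma(w_n)$ is bounded in $\fsL^2(Q_T)$ by (i), while simultaneously $|\nabla\Gamma(w_n)|=\min(\alpha(w_n),1)|\nabla w_n|\le \alpha(w_n)|\nabla w_n|$ is bounded in $\fsL^2(Q_T)$ by (iii). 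Hence $g_n:=\Gamma(w_n)$ is bounded in $\fsL^2(0,T;\H^1(\T^d))$, which immediately gives uniform smallness of its \emph{space} translations.

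The crux is to control the \emph{time} translations of $g_n$, where the difficulty is a regularity mismatch: hypothesis (ii) controls $\partial_t w_n$ (not $\partial_t\Gamma(w_n)$) and only in the weak space $\H^{-m}$, whereas the spatial information sits at the level of $\Gamma(w_n)\in\H^1$. Writing $\Delta_\tau f(t):=f(t+\tau)-f(t)$, I would exploit the monotonicity of $\Gamma$ through the nonnegative quantity
\begin{equation*}
  I_n(\tau):=\int_0^{T-\tau}\!\!\int_{\T^d}\big(\Gamma(w_n(t{+}\tau))-\Gamma(w_n(t))\big)\big(w_n(t{+}\tau)-w_n(t)\big)\,\dd x\,\dd t\ \ge 0,
\end{equation*}
and show $I_n(\tau)\to 0$ as $\tau\to 0$ uniformly in $n$. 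To reconcile $\H^1$ with $\H^{-m}$ I would insert a spatial smoothing $P_\lambda=\ee^{\lambda\lap}$ and decompose $I_n(\tau)=\langle P_\lambda\Delta_\tau g_n,\Delta_\tau w_n\rangle+\langle(1-P_\lambda)\Delta_\tau g_n,\Delta_\tau w_n\rangle$ (integrated over $Q_T$). The second term is bounded by $\|(1-P_\lambda)\Delta_\tau g_n\|_{\fsL^2}\|\Delta_\tau w_n\|_{\fsL^2}\lesssim \sqrt\lambda$ using $\|(1-P_\lambda)f\|_{\fsL^2}\lesssim\sqrt\lambda\,\|f\|_{\H^1}$ and (i). For the first term, the smoothing estimate $\|P_\lambda f\|_{\H^m}\lesssim \lambda^{-(m-1)/2}\|f\|_{\H^1}$ puts $P_\lambda\Delta_\tau g_n$ in $\fsL^2(0,T;\H^m)$, so the $\H^m$--$\H^{-m}$ duality bounds it by $C_\lambda\|\Delta_\tau w_n\|_{\fsL^2(0,T;\H^{-m})}$; from (ii), together with the uniform bound $\|w_n\|_{\fsL^\infty(0,T;\H^{-m})}\le C$ (obtained from $w_n\in W^{1,1}(0,T;\H^{-m})$ by choosing a time slice where $\|w_n(t_0)\|_{\fsL^2}$ is controlled by its time average), one gets $\|\Delta_\tau w_n\|_{\fsL^2(0,T;\H^{-m})}\lesssim\sqrt\tau$. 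Thus $I_n(\tau)\lesssim\sqrt\lambda+C_\lambda\sqrt\tau$; sending first $\lambda\to0$ and then $\tau\to0$ proves the uniform smallness. \textbf{This mollification-plus-monotonicity step is the main obstacle}, as it is what couples the nonlinear spatial estimate to the linear but weak temporal one.

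From $I_n(\tau)\to 0$ I would deduce uniform smallness of the \emph{time} translations of $g_n=\Gamma(w_n)$ in $\fsL^1(Q_T)$. The mechanism is that on each square $[0,R]^2$ strict monotonicity makes $(\Gamma(a)-\Gamma(b))(a-b)$ a nonnegative continuous function vanishing exactly on the diagonal, so by compactness it controls $|\Gamma(a)-\Gamma(b)|$ there; splitting the integral according to whether $w_n(t{\pm}\tau)\le R$ and whether the integrand exceeds a threshold, and using the $\fsL^2$ bound (Chebyshev and uniform integrability) to treat the region $\{w_n>R\}$, yields $\|\Delta_\tau g_n\|_{\fsL^1(Q_T)}\to 0$ uniformly as $\tau\to 0$. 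At this point $g_n$ has uniformly small space and time translations and is tight (finite-measure domain, $\fsL^2$ bound), so the Riesz--Fr\'echet--Kolmogorov theorem gives a subsequence with $g_n\to g$ in $\fsL^1(Q_T)$, hence a.e. Finally I would invert: since $\Gamma^{-1}$ is continuous on the range of $\Gamma$, $w_n=\Gamma^{-1}(g_n)\to\Gamma^{-1}(g)$ a.e., the $\fsL^2$ bound on $w_n$ (via Fatou) ruling out escape to $+\infty$ on the level set $\{g=\Gamma(+\infty)\}$. This produces the desired a.e.\ convergent subsequence.
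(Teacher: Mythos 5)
Your proposal is correct, but it takes a genuinely different route from the paper's. Both arguments pivot on the same truncated primitive (the paper's $F(z)=\int_0^z 1\wedge\alpha$ is your $\Gamma$), which converts hypothesis (iii) into an $\fsL^2(0,T;\H^1(\T^d))$ bound on $F(w_n)$ and whose strict increasingness is ultimately what yields a.e.\ convergence. From there the paper proceeds softly: it extracts weak $\fsL^2(Q_T)$ limits $w_n\weakto w$ and $F(w_n)\weakto\widetilde{w}$, invokes a compensated-compactness product lemma (\cite[Proposition 3]{mou}, which combines the spatial regularity of $F(w_n)$ with the temporal bound (ii)) to get $\int_{Q_T}w_nF(w_n)\to\int_{Q_T}w\widetilde{w}$, and then uses the Minty--Browder trick to conclude that $h_n=(F(w_n)-F(w))(w_n-w)\to 0$ in $\fsL^1(Q_T)$, hence a.e., hence $w_n\to w$ a.e. Your heat-semigroup mollification of $\Gamma(w_n(t+\tau))-\Gamma(w_n(t))$, paired against $w_n(t+\tau)-w_n(t)$ via $\H^m$--$\H^{-m}$ duality with the $\sqrt{\tau}$ bound from (ii), is in essence a hands-on, translation-quantified re-proof of exactly the product lemma the paper cites; and where the paper exploits monotonicity \emph{after} the product limit (Minty), you exploit it \emph{before}, upgrading $I_n(\tau)\to 0$ into uniform $\fsL^1$ smallness of the time translates of $\Gamma(w_n)$, closing with Riesz--Fr\'echet--Kolmogorov and the continuity of $\Gamma^{-1}$ (your Fatou argument correctly disposes of the level set $\{g=\Gamma(+\infty)\}$ when $\Gamma$ is bounded). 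What the paper's route buys is brevity, at the price of an external lemma; yours buys self-containedness and an explicit modulus for the translates, at the price of length and a few technicalities to pin down: take $m>d/2$ without loss of generality so that $w_n\in\textnormal{W}^{1,1}(Q_T)$ is absolutely continuous in time with values in $\H^{-m}(\T^d)$, note that $\|w_n(t+\tau)-w_n(t)\|_{\H^{-m}}\le\|\partial_t w_n\|_{\fsL^1(0,T;\H^{-m})}$ holds directly, making your detour through an $\fsL^\infty(0,T;\H^{-m})$ bound via a good time slice unnecessary, and handle the time-boundary strips in the Kolmogorov criterion (e.g.\ by working on $[0,T-\tau]$ and using the equi-integrability furnished by the $\fsL^2$ bound).
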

\begin{proof}
  By assumption $(iii)$ the sequence $(\nabla F(w_n))_n$ is bounded in
  $\fsL^2(Q_T)$, where $F:\nnR\rightarrow\nnR$ is defined by
  \begin{align*}
    F(z) := \int_0^z 1 \wedge \alpha.
  \end{align*}
  Moreover, $F$ is an increasing (because $\alpha>0$ a.e.)
  $1$-Lipschitz function vanishing at $0$. In particular, we infer
  from $(i)$ the same bound for $(F(w_n))_n$. Up to a subsequence we
  can thus assume that $(w_n)_n$ and $(F(w_n))_n$ respectively
  converge weakly to $w$ and $\widetilde{w}$ in $\fsL^2(Q_T)$.  Using
  $(ii)$ we thus infer from \cite[Proposition 3]{mou} that (up to a
  subsequence),
  \begin{align}
    \label{conv:comp}\int_{Q_T}  w_n F(w_n) \operatorname*{\longrightarrow}_{n\to\infty} \int_{Q_T}  w \widetilde{w}.
  \end{align}
  At this stage we use the Minty-Browder or Leray-Lions trick: one
  first establishes that
  \begin{multline*}
    \int_{Q_T} \stackrel{:=h_n}{\overbrace{(F(w_n)-F(w))(w_n-w)}} \\
    = \int_{Q_T} F(w_n)w_n+\int_{Q_T} F(w)w - \int_{Q_T}F(w_n)w-\int_{Q_T}F(w)w_n
    \operatorname*{\longrightarrow}_{n\to\infty} 0
  \end{multline*}
  by exploiting the $\fsL^2(Q_T)$ weak convergences
  $(w_n)_n \rightharpoonup_n w$,
  $(F(w_n))_n \rightharpoonup_n \widetilde{w}$, together with
  \eqref{conv:comp}. Then, since $F$ is increasing, we have
  $h_n\geq 0$ so that the previous convergence may be seen as the
  convergence of $(h_n)_n$ to $0$ in $\fsL^1(Q_T)$. In particular, up to
  some subsequence, we get that $(h_n)_n$ converges a.e.\ to 0 which in
  turn implies (increasingness of $F$) that $(w_n)_n\rightarrow w$.
\end{proof}
\begin{proof}[Proof of \cref{thm:sktasym}]
  Using the duality estimate of \cref{thm:existince-general-SKT} we
  first have
  \begin{align}\label{ineq:duan}
    \int_{Q_T} \Big(\big[\mu_1(u_{2,n})\star\rho_n\big] u_{1,n} +
    \big[\mu_2(u_{1,n})\star\check{\rho}_n\big] u_{2,n}\Big)\,
    (u_{1,n}+u_{2,n})
    \lesssim_{d,\init} 1,
  \end{align}
  where the constant depends on the dimension and initial data but is
  uniform in $n$. In particular, both species satisfy (since $\mu_1$
  and $\mu_2$ are positively lower-bounded) assumptions $(i)$ and
  $(ii)$ of Lemma~\ref{lem:com}. Using the entropy estimate of
  \cref{thm:existince-general-SKT}, we have also for both species that
  $(\alpha_i(u_{i,n})\nabla u_n)_n$ bounded in $\fsL^2(Q_T)$, which
  validates assumption $(iii)$ of the lemma since the dissipation
  rates $\alpha_i$ are assumed a.e. positive on $\pR$. We infer
  therefore from the previous lemma that, up to a subsequence (that we
  do not label), $(u_{1,n})_n$ and $(u_{2,n})_n$ converge a.e.\ to some
  $u_1$ and $u_2$, respectively.

  We now pass to the limit (in $\mathscr{D}'(Q_T)$) in the products
  \begin{align*}
    \big[\mu_1(u_{2,n})\conv\rho_n\big] u_{1,n} \text{ and } \big[\mu_2(u_{1,n})\conv\check{\rho}_n\big] u_{2,n}.
  \end{align*}
  W.l.o.g.\ we can focus on the first one.  Since $(u_{2,n})_n$
  converges to $u_2$ a.e., so does $(\mu_1(u_{2,n})_n$ to
  $\mu_1(u_2)$, by continuity of $\mu_1$.

  The assumption~\eqref{ineq:assmu} and the entropy estimate of
  \cref{thm:existince-general-SKT} imply that $(\mu_1(u_{2,n}))_n$ is
  bounded in $\fsL^\infty(0,T;\fsL^1(\T^d))$. As this is not
  sufficient to prevent possible concentration in the space variable,
  we use the growth assumption \eqref{lim:mubound}, to establish the
  uniform integrability of $(\mu_1(u_{2,n}))_n$. Indeed, since $\mu_1$
  is continuous, the sequence
  $c_R:= \inf\{z\geq 0\,:\,\mu_1(z) \geq R\}$ diverges to $+\infty$
  with $R$ and we have
  \begin{align*}
    \int_{Q_T} \mu_1(u_{2,n}) \mathbf{1}_{\mu_1(u_{2,n})\geq R} & \leq     \int_{Q_T} \mu_1(u_{2,n}) \mathbf{1}_{u_{2,n}\geq c_R} \leq \sup_{z \geq c_R} \Phi(z) \int_{Q_T} h_2(u_{2,n}) + u_{2,n}^2,
  \end{align*}
  where $\Phi(z):= \frac{\mu_1(z)}{h_2(z)+ z^2}$ goes to $0$ as
  $z\rightarrow +\infty$, by assumption \eqref{lim:mubound}. Since
  $\mu_2$ is positively lower-bounded (thanks to
  assumption~\eqref{ineq:assmu}), we infer from \eqref{ineq:duan} a
  bound for $(u_{2,n})_n$ in $\fsL^2(Q_T)$ (we use here the
  non-negativity of all the involved functions). Using the entropy
  estimate and the previous inequalities, we therefore infer
  \begin{align*}
    \lim_{R\rightarrow +\infty} \sup_{n}     \int_{Q_T} \mu_1(u_{2,n}) \mathbf{1}_{\mu_1(u_{2,n})\geq R} =0,
  \end{align*}
  which establishes uniform integrability.

  Therefore, Vitali's convergence theorem implies that
  $(\mu_1(u_{2,n}))_n$ converges to $\mu_1(u_2)$ in $\fsL^1(Q_T)$. The
  sequence $(\mu_1(u_{2,n})\star\rho_n)_n$ shares the same
  behaviour. In particular, $(\mu_1(u_{2,n})\star\rho_n)_n$ is also
  uniformly integrable and adding a subsequence if necessary, we can
  assume that it converges a.e. towards $\mu_1(u_2)$. Now, to conclude
  we write
  \[w_n:=\big[\mu_1(u_{2,n})\conv\rho_n\big] u_{1,n} =
    \big[\mu_1(u_{2,n})\conv\rho_n\big]^{1/2}\,\big[\mu_1(u_{2,n})\conv\rho_n\big]^{1/2}
    u_{1,n}.
  \]
  As already noticed, $(w_n)_n$ converges a.e. to the expected limit
  $\mu_1(u_2)u_1$. The previous writing together with the duality
  estimate \eqref{ineq:duan} and the Cauchy-Schwarz inequality shows
  that $(w_n)_n$ is bounded in $\fsL^1(Q_T)$. Even better, $(w_n)_n$
  is the product of a $\fsL^2$-uniformly integrable sequence with an
  $\fsL^2$-bounded one so that $(w_n)_n$ is uniformly integrable
  and the Vitali convergence theorem applies once more to get the
  convergence of $(w_n)_n$ towards $\mu_1(u_2)u_1$.

  The previous reasoning (which applies to both species) allows to
  pass to the limit of the equations. The limit satisfies the
  estimates by Fatou's lemma.
\end{proof}

\section{General regularised scheme on a domain}
\label{sec:solution-regularised-system}

In this section, we study the general regularisation scheme introduced
in \cref{thm:formal-general} and prove the corresponding results
\cref{thm:general-system-existence-regularised} and
\cref{thm:general-skt-asymptotic}.

\subsection{Existence of regularised solutions}

We start with proving the existence of solutions for the regularised
scheme, i.e.\ \cref{thm:general-system-existence-regularised}.

The advantage of the regularisation is that the cross-diffusion terms
are controllable and we thus rewrite the evolution as
\begin{equation*}
  \begin{split}
    &\partial_t u_i(x_i)
    - \divergence_{x_i}
    \left[
      \left(
        \epsilon +
        \prod_{k\not = i} \int_{x_k \in \Omega} \dd x_k\,
        K(x_1,\dots,x_n)\,
        a_{ii}(u_1(x_1),\dots,u_n(x_n))
      \right)
      \nabla u_i(x_i)
    \right] \\
    &=
    \divergence_{x_i}
    \left[
      \prod_{k\not = i} \int_{x_k \in \Omega} \dd x_k\,
      K(x_1,\dots,x_n)\,
      \sum_{j\not= i}
      a_{ij}(u_1(x_1),\dots,u_n(x_n))
      \nabla u_j(x_j)
    \right].
  \end{split}
\end{equation*}

For the cross-diffusion terms, the \(\tilde{a}_{ij}\) in
\cref{thm:general-system-existence-regularised} are defined such that
\begin{equation*}
  a_{ij}(u_1(x_1),\dots,u_n(x_n))
  \nabla u_j(x_j)
  = \nabla_{x_j} \tilde{a}_{ij}(u_1(x_1),\dots,u_n(x_n))
\end{equation*}
so that the partial derivative can formally be integrated by parts onto the
kernel \(K\), where no boundary terms appear due to
\eqref{eq:general-regularised-evolution-boundary}. Hence the evolution
can be rewritten as
\begin{equation}
  \label{eq:general:parabolic-form}
  \partial_t u_i - \nabla\big( (\epsilon + \bar{a}_i[u]) \nabla
  u_i\big)
  + \bar{b}_i[u] \nabla u_i + \bar{c}_i[u] u_i = 0,
\end{equation}
with von Neumann boundary conditions and
\begin{align}
  \bar{a}_i[u](x_i)
  &=
    \prod_{k\not = i} \int_{x_k \in \Omega} \dd x_k\,
    K(x_1,\dots,x_n)\,
    a_{ii}(u_1(x_1),\dots,u_n(x_n)),\\
  \bar{b}_i[u](x_i)
  &= \sum_{j\not =i}
    \prod_{k\not = i} \int_{x_k \in \Omega} \dd x_k\,
    \partial_j K(x_1,\dots,x_n)\,
    \partial_i \tilde{a}_{ij}(u_1(x_1),\dots,u_n(x_n)),\\
  \bar{c}_i[u](x_i)
  &= \sum_{j\not =i}
    \prod_{k\not = i} \int_{x_k \in \Omega} \dd x_k\,
    \partial_{ij} K(x_1,\dots,x_n)\,
    \frac{\tilde{a}_{ij}(u_1(x_1),\dots,u_n(x_n))}{u_i(x_i)}.
\end{align}
The assumptions of \cref{thm:general-system-existence-regularised}
then imply for \(x_i \in \domain\) that
\begin{equation}
  \label{eq:general-coefficient-bounds}
  \begin{split}
    |\bar{a}_i[u](x_i)|
    &\le A\, \| K \|_{\infty} (|\domain| + H(u)),\\
    |\bar{b}_i[u](x_i)|
    &\le A\, \| \nabla K \|_{\infty} (|\domain| + H(u)),\\
    |\bar{c}_i[u](x_i)|
    &\le A\, \| \nabla^2 K \|_{\infty} (|\domain| + H(u)).
  \end{split}
\end{equation}

This is enough to prove the existence of solutions by a Galerkin
scheme.

\begin{proof}[Proof of \cref{thm:general-system-existence-regularised}]
  Let \(\sigma \in \fsC^\infty_c(\R^n)\) a non-negative mollification
  kernel with \(\supp \sigma \subset B_1\) and
  \(\int \sigma \dd x = 1\) and define
  \begin{equation*}
    \sigma^m(x) = m^d\, \sigma(mx).
  \end{equation*}
  Extending \(\bar{a}_i\), \(\bar{b}_i\), \(\bar{c}_i\) with zero
  outside \(\domain\), we consider for \(m \in \N\) the following
  system
  \begin{equation}
    \label{eq:mollified-galerkin}
    \begin{split}
      \partial_t u_i^m &- \nabla\big( (\epsilon
      + ((\bar{a}_i[u^m]\wedge M) \conv \sigma^m) \nabla
      u_i^m\big) \\
      &+ ((\bar{b}_i[u^m]\wedge M) \conv \sigma^m) \nabla u_i^m
      + ((\bar{c}_i[u^m]\wedge M) \conv \sigma^m)  u_i^m = 0,
    \end{split}
  \end{equation}
  with von Neumann boundary conditions, \(i=1,\dots,n\) and the
  constant \(M\) as in \eqref{eq:growth-constant-m}.

  By a standard Galerkin scheme (e.g.\ taking the von Neumann
  eigenvectors of the Laplacian on \(\domain\)), the system
  \eqref{eq:mollified-galerkin} has a solution \(u_i^m\) with initial
  data \(u_i^\init\) and has any \(H^k\), \(k\in\N\), regularity after
  an arbitrary short time. Hence we can apply the maximum principle
  for parabolic equations and find as in \cref{cor:kol1} that
  \begin{equation*}
    \gamma \exp\left(-TM\right)
    \leq u_i^m
    \leq
    \gamma^{-1} \exp\left(TM\right).
  \end{equation*}
  Furthermore, each \(u_i^m\) is preserving the mass. Finally, we can
  test \eqref{eq:mollified-galerkin} against \(u_i^m\) to find the
  followig estimate independent of \(m\):
  \begin{equation*}
    \sup_{t\in [0,T]} \| u_i^m(t,\cdot) \|_{\fsL^2(\domain)}^2
    + \epsilon \int_0^T \| \nabla u_i^m(t,\cdot) \|_{\fsL^2(\domain)}^2
    \le
    \exp\left[TM
      \Big(
      2+\frac{1}{\epsilon}
      \Big)
    \right]
    \| u_i^\init \|_{\fsL^2(\domain)}^2,
  \end{equation*}
  where \(i=1,\dots,n\) and we used the cutoff with \(M\). Note that
  here the RHS is bounded by assumption. Hence we find for a constant
  \(C(T)\) independent of \(m\) that
  \begin{equation*}
    \| \partial_t u_i^m \|_{\fsL^2(0,T,\H^{-1}(\domain))} \le C(T)
  \end{equation*}
  for \(i=1,\dots,n\).

  By Aubin-Lions lemma we can therefore find a subsequence
  (relabelling with \(m\)) and
  \begin{equation*}
    u_i \in
    \fsC^0([0,T];\fsL^2(\domain))\cap \fsL^2(0,T;\H^1(\domain))\cap\fsL^\infty(0,T;\fsL^\infty(\domain)),
  \end{equation*}
  for \(i=1,\dots,n\) such that \(u_i^m\) converges almost everywhere
  to \(u_i\) and \(\nabla u_i^m\) converges \(\fsL^2\) weakly to
  \(\nabla u_i\). Moreover, it holds that
  \begin{equation*}
    \gamma \exp\left(-TM\right)
    \leq u_i
    \leq
    \gamma^{-1} \exp\left(TM\right).
  \end{equation*}
  The convergence implies that for \(\phi \in \fsC^\infty(\domain_T)\)
  with \(\phi(T,\cdot)\equiv 0\) it holds that
  \begin{equation*}
    \begin{split}
      -\int_0^T \int_\domain
      u_i \partial_t \phi
      &+\int_0^T \int_\domain
      (\epsilon + \bar{a}_i[u]\wedge M) \nabla u_i
      \cdot \nabla \phi
      +\int_0^T \int_\domain
      (\bar{b}_i[u]\wedge M) \cdot \nabla u_i\;
      \phi\\
      &+\int_0^T \int_\domain
      (\bar{c}_i[u]\wedge M) u_i\;
      \phi
      =\int_\domain u_i^\init \phi^\init,
    \end{split}
  \end{equation*}
  i.e.\ \(u=(u_1,\dots,u_n)\) is a weak solution with von Neumann
  boundary data. Moreover, by the continuity this implies directly the
  conservation of mass.

  Until a time \(T^* \le T\) for which
  \begin{equation*}
    \sup_{t\in [0,T^*]}
    \sup_{x\in \domain}
    \max(\bar{a}_i[u],|\bar{b}_i[u]|,\bar{c}_i[u])
    \le M,
  \end{equation*}
  the cutoff \(M\) is not applied and we have a weak solution of
  \eqref{eq:general:parabolic-form}.  As in the Laplace case on the
  torus in \cref{sec:convo}, the proven regularity is sufficient to
  justify rigorously the formal entropy estimate as in
  \cref{thm:formal-general}.

  The assumptions \eqref{eq:general-coefficient-bounds} then imply
  that at time \(T^*\) it holds that
  \begin{equation*}
    \sup_{x\in \domain}
    \max(\bar{a}_i[u],|\bar{b}_i[u]|,\bar{c}_i[u])
    \le \frac{M}{2}
  \end{equation*}
  and thus by continuity \(T^*=T\) and we have constructed the claimed
  solution.
\end{proof}

\subsection{Limit for the SKT system}

Having constructed the nonlocal approximation, we now prove
\cref{thm:general-skt-asymptotic}.

The assumption of the extension operator allows to find a uniform
Gagliardo-Nirenberg inequality.
\begin{lemma}
  \label{thm:uniform-gagliardo-nirenberg}
  Assume the setup of \cref{thm:general-skt-asymptotic}. Then there
  exists a uniform \(c\) for the Gagliardo-Nirenberg inequality
  \begin{equation*}
    \| f \|_{\fsL^p(A_m)}^p
    \le c\, \Big(
    \| f \|_{\fsL^1(A_m)}^{(1-\theta)p}
    \| \nabla f \|_{\fsL^2(A_m)}^{\theta p}
    +
    \| f \|_{\fsL^1(A_m)}^{p}
    \Big)
    \qquad
    \forall f:A_m\to \R,
  \end{equation*}
  holds on all \(A_m\), where \(m\in \N\), \(\theta=2/p\)
  and \(p=2+2/d\).
\end{lemma}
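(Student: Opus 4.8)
The plan is to deduce the inequality from the classical (scale–invariant) Gagliardo--Nirenberg inequality on $\R^d$, transported to each $A_m$ by the extension operators $\sExt_m$; the entire point is that the resulting constant is \emph{uniform} in $m$, which is forced by the uniform operator norms and by $A_m\csubset\domain$. The exponents are exactly the compatible ones: with $p=2+2/d$ and $\theta=2/p=d/(d+1)$ one checks the homogeneity relation $\tfrac1p=\theta\bigl(\tfrac12-\tfrac1d\bigr)+(1-\theta)$, so that the homogeneous inequality $\|g\|_{\fsL^p(\R^d)}\le C_d\,\|\nabla g\|_{\fsL^2(\R^d)}^{\theta}\,\|g\|_{\fsL^1(\R^d)}^{1-\theta}$ holds on $\R^d$ with a purely dimensional constant $C_d$ (and $p<2^*$ for $d\ge 3$, so the inequality is in its valid range).

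Given $f:A_m\to\R$, I would set $g:=\sExt_m f$ and start from $\|f\|_{\fsL^p(A_m)}\le\|g\|_{\fsL^p(\R^d)}$, then apply the above inequality to $g$. The gradient factor is controlled by the uniform hypothesis, $\|\nabla g\|_{\fsL^2(\R^d)}\le\|g\|_{W^{1,2}(\R^d)}\le c\,\|f\|_{W^{1,2}(A_m)}$. For the $\fsL^1$ factor the goal is $\|g\|_{\fsL^1(\R^d)}\le c'\,\|f\|_{\fsL^1(A_m)}$: this is where the cited reflection/Stein type constructions enter, since those extensions are bounded on every $\fsL^q$ with norm governed by the (uniform) Lipschitz character of the $A_m$; equivalently one writes $\|g\|_{\fsL^1(\R^d)}=\|f\|_{\fsL^1(A_m)}+\|g\|_{\fsL^1(\R^d\setminus A_m)}$ and estimates the collar contribution, keeping in mind that the naive Hölder bound $\|g\|_{\fsL^1(\R^d\setminus A_m)}\le C\|g\|_{\fsL^p(\R^d)}$ is too lossy (it would reintroduce $\|f\|_{\fsL^p}$ and ruin the target exponent), so one really needs the $\fsL^1$ boundedness of the extension.

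With these two bounds in hand I obtain $\|f\|_{\fsL^p(A_m)}\le C\,\|f\|_{W^{1,2}(A_m)}^{\theta}\,\|f\|_{\fsL^1(A_m)}^{1-\theta}$, and it remains to remove the spurious $\fsL^2$ part of the $W^{1,2}$ norm. Writing $\|f\|_{W^{1,2}(A_m)}\le\|\nabla f\|_{\fsL^2(A_m)}+\|f\|_{\fsL^2(A_m)}$, I would interpolate the superfluous term by the universal (constant $1$) bound $\|f\|_{\fsL^2(A_m)}\le\|f\|_{\fsL^1(A_m)}^{1-s}\|f\|_{\fsL^p(A_m)}^{s}$ with $s=\tfrac{d+1}{d+2}$, raise everything to the power $p$, and expand. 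The genuine term $\|f\|_{\fsL^1(A_m)}^{(1-\theta)p}\|\nabla f\|_{\fsL^2(A_m)}^{\theta p}$ is what we want to keep; every remaining contribution carries a power of $\|f\|_{\fsL^p(A_m)}$ strictly below $p$ (the bookkeeping gives exponent $s\theta p<p$), so by Young's inequality these are dominated by $\tfrac12\|f\|_{\fsL^p(A_m)}^p+c_\eta\|f\|_{\fsL^1(A_m)}^p$. Absorbing the $\|f\|_{\fsL^p(A_m)}^p$ into the left-hand side and collecting the lower-order debris into the term $c\,\|f\|_{\fsL^1(A_m)}^p$ yields exactly the claimed estimate.

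The inequality on $\R^d$ being universal, the hard part is entirely the uniformity: one must check that none of the imported constants secretly depends on $m$. This is precisely why the hypotheses provide the $m$-independent norms $\|\sExt_m\|_{W^{1,2}\to W^{1,2}}\le c$ and $\|\sExt_m\|_{\fsL^p\to\fsL^p}\le c$, together with $A_m\csubset\domain$ (so $|A_m|\le|\domain|$ is uniformly bounded, and the interpolation constant above is in fact dimension–only). The single delicate ingredient is the uniform control of $\|g\|_{\fsL^1(\R^d)}$ by $\|f\|_{\fsL^1(A_m)}$, for which the uniform $\fsL^1$ boundedness of the explicit extensions of \cite[Section 5.4]{evans-2010-pde-book} and \cite[Section VI]{stein-1970-singular-integrals} is the essential point.
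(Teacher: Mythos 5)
Your proof is correct and follows essentially the same route as the paper: extend by \(\sExt_m\), apply the homogeneous Gagliardo--Nirenberg inequality on \(\R^d\) with \(\theta=2/p\), pull the estimate back to \(A_m\), and then eliminate the \(\fsL^2\) part of the \(W^{1,2}\) norm by interpolating \(\|f\|_{\fsL^2(A_m)}\) between \(\fsL^1\) and \(\fsL^p\) and absorbing the \(\fsL^p\) contribution. If anything, you are more explicit than the paper, which silently invokes \(\|\sExt_m f\|_{\fsL^1(\R^d)}\lesssim\|f\|_{\fsL^1(A_m)}\) even though only the \(W^{1,2}\) and \(\fsL^p\) operator bounds appear in the hypotheses of \cref{thm:general-skt-asymptotic} --- precisely the delicate point you isolate and justify via the \(\fsL^q\)-boundedness of the cited explicit extension constructions.
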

\begin{proof}
  By the extension operator \(\sExt_m\) and the Gagliardo-Nirenberg
  inequality on \(\R^d\) we find 
  \begin{equation*}
    \begin{split}
      \| f \|_{\fsL^p(A_m)}^p
      &\le \| \sExt_m(f) \|_{\fsL^p(\R^d)}^p \\
      &\lesssim \| \sExt_m(f) \|_{\fsL^1(\R^d)}^{(1-\theta)p}\;
      \| \nabla \sExt_m(f) \|_{\fsL^2(\R^d)}^2 \\
      &\lesssim \| f \|_{\fsL^1(A_m)}^{(1-\theta)p}\;
      \big(\| f \|_{\fsL^2(A_m)}^2
      + \| \nabla f \|_{\fsL^2(A_m)}^2 \big).
    \end{split}
  \end{equation*}
  As \(p>2\), we can interpolate \(\| f \|_{\fsL^2(A_m)}\) between
  \(\| f \|_{\fsL^1(A_m)}\) and \(\| f \|_{\fsL^p(A_m)}\) and absorb
  the contribution of \(\| f \|_{\fsL^p(A_m)}\) so that the claimed
  inequality follows.
\end{proof}

The first lemma ensures the
integrability and determines the sequence \(\epsilon\).

\begin{lemma}
  \label{thm:choice-epsilon-skt}
  Assume the setup of \cref{thm:general-skt-asymptotic}. Then there
  exists a constant \(C_T\) and a decreasing sequence
  \((\epsilon_m)_{m}\) with \(\epsilon_m \downarrow 0\) such that
  \begin{equation*}
    \| u^m_i \|_{\fsL^{\tilde{p}}([0,T)\times\domain)} \le C_T
  \end{equation*}
  for \(i=1,\dots,n\) and \(\tilde{p} = 2+1/d\).
\end{lemma}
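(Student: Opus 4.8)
The plan is to split the slab $[0,T)\times\domain$ into the interior part $[0,T)\times A_m$, where the cross-diffusion dissipation is uniformly available, and the shrinking boundary layer $[0,T)\times(\domain\setminus A_m)$, where only the $\epsilon$-regularisation provides control; the whole point is to fix a sequence $\epsilon_m\downarrow0$ for which the boundary contribution stays bounded. First I would collect the bounds on $u^m_i$ that are uniform in \emph{both} $m$ and $\epsilon$. Mass conservation \eqref{eq:approx-general:mass} gives $\sup_{t}\|u^m_i(t)\|_{\fsL^1(\domain)}=\|u^\init_i\|_{\fsL^1(\domain)}$, while the entropy–dissipation inequality \eqref{eq:approx-general:entropy} gives $\sup_t H(u^m(t))\le H^\init$ and $\int_0^T D\le H^\init$. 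For the SKT entropy one has $h_i''(z)=\pi_i/z$ and the dissipations $\alpha_i=\pi_i d_{ii}$ are strictly positive constants; since $w^m_i\equiv1$ on $A_m$, the $\alpha_i^2 w^m_i$–part of $D$ controls $\int_0^T\|\nabla u^m_i\|_{\fsL^2(A_m)}^2$ by $H^\init/(\pi_i d_{ii})^2$, uniformly in $m$ and $\epsilon$.

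On the interior I would then feed the uniform $\fsL^\infty_t\fsL^1_x$ and $\fsL^2_t\H^1_x(A_m)$ bounds into the uniform Gagliardo–Nirenberg inequality of \cref{thm:uniform-gagliardo-nirenberg} (applied to $f=u^m_i(t)$ and integrated in time, using $\theta p=2$), obtaining $\int_0^T\|u^m_i\|_{\fsL^p(A_m)}^p\le C$ with $p=2+2/d$, uniformly in $m$ and $\epsilon$. As $\tilde p<p$ and $|A_m|\le|\domain|$, Hölder immediately gives a uniform bound for $\|u^m_i\|_{\fsL^{\tilde p}([0,T)\times A_m)}$.

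The delicate part is the boundary layer. Here the maximum principle is useless, because its constant $\exp(TM_m)$ blows up with $M_m\to\infty$ (the kernels $K^m$ concentrate along the diagonal, so $\|K^m\|_\infty$ and its derivatives diverge); in particular the $\epsilon$-regularity $\fsL^2$-estimate of \cref{thm:general-system-existence-regularised} cannot be used uniformly. Instead I would route the $\epsilon$-part of the dissipation through the convexity of the entropy: the bound $\epsilon_m\int_0^T\int_\domain h_i''(u^m_i)|\nabla u^m_i|^2\le H^\init$ reads $\int_0^T\int_\domain|\nabla\sqrt{u^m_i}|^2\le H^\init/(4\pi_i\epsilon_m)$, which carries a factor $1/\epsilon_m$ but is \emph{free} of the fatal $\exp(TM_m)$. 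Combined with $\sqrt{u^m_i}\in\fsL^\infty_t\fsL^2_x$ (from mass) and the parabolic Gagliardo–Nirenberg embedding, this bounds $u^m_i$ on all of $\domain$ in a space–time integrability controlled by $1/\epsilon_m$. Interpolating this clean $\epsilon_m$-dependent global control against the uniform interior $\fsL^p(A_m)$ bound and paying the vanishing volume $|\domain\setminus A_m|\to0$ yields a boundary estimate of the form $\|u^m_i\|_{\fsL^{\tilde p}([0,T)\times(\domain\setminus A_m))}\le C\,|\domain\setminus A_m|^{a}\,\epsilon_m^{-b}$ for some $a,b>0$. Finally I would choose $\epsilon_m\downarrow0$ slowly relative to $|\domain\setminus A_m|\downarrow0$ (for instance a small root of the volume), so that $|\domain\setminus A_m|^{a}\epsilon_m^{-b}$ stays bounded while still $\epsilon_m\to0$; this simultaneously fixes the sequence $(\epsilon_m)_m$ and the constant $C_T$, and adding the interior and boundary contributions gives the claim.

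I expect the main obstacle to be exactly this boundary layer. Avoiding the $\exp(TM_m)$ degeneracy forces one to work with $\sqrt{u^m_i}$, whose diagonal parabolic exponent is only $1+2/d$, which is strictly below $\tilde p=2+1/d$ for $d\ge2$; the missing integrability must therefore be borrowed from the uniform interior $\fsL^p(A_m)$ control and the shrinking volume of $\domain\setminus A_m$ must be spent, all while keeping the unavoidable $1/\epsilon_m$ loss affordable through the calibrated choice of $\epsilon_m$.
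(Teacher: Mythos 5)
Your interior half reproduces the paper's argument exactly (mass conservation, the dissipation with \(w^m_i\equiv 1\) on \(A_m\), the uniform Gagliardo--Nirenberg inequality of \cref{thm:uniform-gagliardo-nirenberg} with \(\theta p=2\), then H\"older down from \(p=2+2/d\) to \(\tilde p\)). The boundary layer is where your proposal breaks down. Your \(M\)-free substitute for the \(\epsilon\)-regularity estimate, namely \(\epsilon_m\int_0^T\int_\domain h_i''(u^m_i)|\nabla u^m_i|^2\le H^\init\), i.e.\ \(\|\nabla\sqrt{u^m_i}\|^2_{\fsL^2([0,T)\times\domain)}\lesssim \epsilon_m^{-1}\), together with mass conservation places \(\sqrt{u^m_i}\) in \(\fsL^\infty(0,T;\fsL^2(\domain))\cap\fsL^2(0,T;\H^1(\domain))\) and hence \(u^m_i\) in \(\fsL^{1+2/d}\) --- and nothing more. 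On \(\domain\setminus A_m\) this is the \emph{only} integrability you have, and \(1+2/d<\tilde p=2+1/d\) for every \(d\ge 2\) (equality for \(d=1\)). The step ``interpolating this against the uniform interior \(\fsL^p(A_m)\) bound and paying the vanishing volume'' is not a real mechanism: the interior bound carries no information on \(\domain\setminus A_m\) (the two sets are disjoint, so there is nothing to interpolate between), and H\"older with a small volume factor only \emph{lowers} exponents --- it can never raise \(\fsL^{1+2/d}\) control to \(\fsL^{\tilde p}\) control on the layer, however fast \(|\domain\setminus A_m|\to 0\) or however \(\epsilon_m\) is calibrated. Even in \(d=1\), where the exponents coincide, your bound is \(C\epsilon_m^{-b}\) with no compensating small factor, so it diverges once \(\epsilon_m\downarrow 0\). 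The gap is structural: any route controlling only \(\nabla\sqrt{u^m_i}\) caps at the exponent \(1+2/d\) off \(A_m\), strictly below the target; your own closing remark essentially concedes this without supplying the missing mechanism.

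The paper resolves the layer the other way around: it applies the Gagliardo--Nirenberg argument on all of \(\domain\) using the \(\epsilon\)-regularity estimate of \cref{thm:general-system-existence-regularised}, which controls the \emph{full} gradient \(\nabla u^m_i\) in \(\fsL^2\) at a cost blowing up through \(\epsilon_m\). This yields the exponent \(p=2+2/d\) \emph{strictly above} \(\tilde p\) on the whole domain with a degenerate constant; H\"older on the shrinking layer then gains the genuinely small factor \(|\domain\setminus A_m|^q\) with \(q>0\), and \(\epsilon_m\) is finally chosen to decay slowly enough that \(|\domain\setminus A_m|^q\big(1+\epsilon_m^{-1}\exp(\epsilon_m^{-1})\big)^{1/p}\) stays bounded. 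Your observation that the constant in the \(\epsilon\)-regularity estimate involves \(M\), hence the \(\fsC^2\) norms of the concentrating kernels \(K^m\), is a fair and sharp point --- the paper's proof records this constant as \(m\)-independent, which is precisely where it tacitly requires the growth of \(\|K^m\|_{\fsC^2}\) to be absorbable, via the choice of \(\epsilon_m\), by the decay of \(|\domain\setminus A_m|\). But the cure cannot be to discard the full-gradient estimate: without it no exponent above \(\tilde p\) is available on the layer, so your replacement loses the very ingredient that makes the boundary contribution small.
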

\begin{proof}
  For the regularisation kernel \(K^m\) and \(\epsilon_m > 0\), we find
  by \cref{thm:general-system-existence-regularised} a solution
  \(u_m\) which satisfies
  \begin{enumerate}[label={(\roman*)}]
  \item \(\|u_i^m\|_{\fsL^\infty(0,T;\fsL^1(\domain))} \le c\) for
    \(i=1,\dots,n\) (conservation of mass),
  \item \(\|\nabla u_i^m\|_{\fsL^2(0,T;\fsL^2(\domain))}^2 \le
    \epsilon_m^{-1} \exp(\epsilon_m^{-1}) c\) (\(\epsilon\)-dependent
    estimate),
  \item \(\|\nabla u_i^m\|_{\fsL^2(0,T;\fsL^2(A_m))}^2 \le c\)
    (dissipation estimate in the set \(A_m\) on which the weights are
    \(w_i^m \equiv 1\))
  \end{enumerate}
  for a constant \(c\) independent of \(m\).

  The parameters \(\theta\) and \(p\) of the Gagliardo-Nirenberg in
  \cref{thm:uniform-gagliardo-nirenberg} are chosen such that
  \(\theta p = 2\). Hence we find on \(A_m\) that for \(i=1,\dots,n\)
  \begin{equation*}
    \int_0^T \| u_i^m \|_{\fsL^p(A_m)}^p \dd t
    \lesssim \int_0^T
    \left(\| \nabla u_i^m \|_{\fsL^2(A_m)}^2\;
      \| u_i^m \|_{\fsL^1(A_m)}^{(1-\theta)p}
      +
      \| u_i^m \|_{\fsL^1(A_m)}^{p}
    \right)\; \dd t.
  \end{equation*}
  With the gradient control from the dissipation and the conservation
  of mass this shows
  \begin{equation*}
    \int_{[0,T)\times A_m} |u_i^m|^{p}\, \dd x\, \dd t
    \le c_d
  \end{equation*}
  for a constant \(c_d\) independent of \(m\).

  As the domain \(\domain\) is assumed to have \(\fsC^1\) boundary, we
  can also apply the argument of
  \cref{thm:uniform-gagliardo-nirenberg} to find over \(\domain\) that
  for \(i=1,\dots,n\)
  \begin{equation*}
    \int_0^T \| u_i^m \|_{\fsL^p(\domain)}^p \dd t
    \lesssim \int_0^T
    \left(\| \nabla u_i^m \|_{\fsL^2(\domain)}^2\;
      \| u_i^m \|_{\fsL^1(\domain)}^{(1-\theta)p}
      +
      \| u_i^m \|_{\fsL^1(\domain)}^{p}
    \right)\; \dd t.
  \end{equation*}
  Hence we find for a constant \(c_e\) independent of \(m\) that
  \begin{equation*}
    \int_{[0,T)\times\domain} |u_i^m|^{p}\, \dd x\, \dd t
    \le c_e \big(
    1+ \frac{c_e}{\epsilon_m} \exp(\epsilon_m^{-1})\big).
  \end{equation*}

  As \(\tilde{p} < p\) we can find \(q\in (0,1)\) so that the
  Hölder inequality implies
  \begin{equation*}
    \| f \|_{\fsL^{\tilde{p}}([0,T)\times B)} \le
    (T\, |B|)^q\; \| f \|_{\fsL^p([0,T)\times B)}
  \end{equation*}
  for \(B \subset \domain\) and \(f \in \fsL^p([0,T)\times B)\).

  By splitting \(\domain\) into \(A_m\) and \(\domain\setminus A_m\)
  we therefore find (as \(|A_m| \le |\domain|\))
  \begin{equation*}
    \begin{split}
      \| u_i^m \|_{\fsL^{\tilde{p}}([0,T) \times \domain)}
      &\le \| u_i^m \|_{\fsL^{\tilde{p}}([0,T) \times A_m)}
      + \| u_i^m \|_{\fsL^{\tilde{p}}([0,T) \times (\domain\setminus A_m))} \\
      &\le T^q\, |\domain|^q c_d^{1/p}
      + T^q\, |\domain\setminus A_m|^q c_e^{1/p}
      \left( 1 + \frac{\exp(\epsilon_m^{-1})}{\epsilon_m} \right)^{1/p}.
    \end{split}
  \end{equation*}
  As \(|\domain\setminus A_m| \to 0\) and \(q \in (0,1)\), we can
  therefore find a sequence \(\epsilon_m \downarrow 0\) such that
  \(|\domain\setminus A_m|^q (1 + \epsilon_m^{-1}
  \exp(\epsilon_m^{-1}))^{1/p}\) is bounded by a constant independent
  of \(m\). The claim then follows directly from the given estimate.
\end{proof}

We can now proceed with the convergence result.
\begin{proof}[Proof of \cref{thm:general-skt-asymptotic}]
  Inside each good set \(A_{\bar{m}}\), the dissipation and mass
  conservation give a uniform estimate for \(u_i^m\) in
  \(\fsL^\infty(0,T;\fsL^1(A_m))\) and \(\fsL^2(0,T;\H^1(A_m))\) for
  \(i=1,\dots,n\) and \(m\ge \bar{m}\). By the equation this also
  gives a uniform estimate of the time-derivative in
  \(\fsL^1(0,T;\H^{-k}(A_m))\) for a large enough \(k \in \N\)
  (depending only on dimension \(d\)). Hence on \(A_{\bar{m}}\) we
  have compactness for \(u_i^m\). As \(A_m \uparrow \domain\), a
  diagonal argument shows that along a subsequence (which we relabel
  with \(m\)) that for \(i=1,\dots,n\) there exist
  \(u_i : [0,T) \times \domain\) such that \(u^m_i \to u_i\)
  a.e. Moreover, choosing \(\epsilon_m\) as in
  \cref{thm:choice-epsilon-skt} we find
  \(u_i \in \fsL^{\tilde{p}}([0,T)\times\domain)\).

  By the dissipation inequality we find that
  \begin{equation*}
    \| \sqrt{w_i^m} \nabla u_i^m \|_{\fsL^2([0,T)\times\domain)}
  \end{equation*}
  is uniformly bounded. Hence along a subsequence
  \(\sqrt{w_i^m} \nabla u_i^m\) converges weakly in \(\fsL^2\) to a limit
  \(\psi_i\). As \(w_i^m\) is the constant \(1\) inside the set \(A_m\)
  and \(A_m \uparrow \Omega\), it follows that
  \(\sqrt{w_i^m} \nabla u_i^m \weakto \nabla u\) and \(\nabla u \in \fsL^2\).

  As \(u^m\) preserves the mass, is non-negative and satisfies the
  entropy-dissipation inequality, the same is true for the limit \(u\)
  by using the stated regularity. Moreover, the stated regularity
  gives the claimed convergence.

  It thus remains to check that \(u\) is a weak solution. As \(u^m\)
  satisfies von Neumann boundary data and \(K^m\) vanishes at the
  boundary, the constructed solutions satisfy for all
  \(\phi \in \fsC^\infty([0,T] \times \domain\) with
  \(\phi(T,\cdot) \equiv 0\) and \(i=1,\dots,n\) that
  \begin{equation*}
    \begin{split}
      &- \int_0^T \int_\domain u_i^m\, \partial_t \phi
      + \epsilon_m \int_0^T \int_\domain
      \nabla u_i^m \cdot \phi \\
      &+ \int_0^T \int_\domain
      \left(\prod_{k\not = i} \int_{x_k\in \domain} \dd x_k
        K^m(x_1,\dots,x_n) \sum_{j=1}^{n} a_{ij}(u_1^m(x_1),\dots,u_n^m(x_n)) \nabla u_j^m(x_j)\right) \cdot
      \nabla \phi \\
      &= \int_{\domain} u_i^\init \phi(0,\cdot).
    \end{split}
\end{equation*}

  For the diffusion from \(d_{ij}\) with \(i\not= j\) and
  \(i,j=1,\dots,n\) we must therefore show that for all test function
  \(\phi\)
  \begin{equation*}
    \begin{split}
      &\int_0^T \int_{\domain^n}
      K^m(x_1,\dots,x_n) u_i^m(t,x_i) \nabla u_j^m(t,x_j)\, \nabla \phi(t,x_i)\,
      \dd x_1\dots \dd x_n \dd t \\
      &\to
      \int_0^T
      \int_\domain u_i(t,x) \nabla u_j(t,x)\, \nabla \phi(t,x)\,
      \dd x\, \dd t.
    \end{split}
  \end{equation*}
  We rewrite the nonlinear diffusion term as
  \begin{equation*}
    \begin{split}
      &\int_0^T \int_{\domain^n}
      K^m(x_1,\dots,x_n) u_i^m(t,x_i) \nabla u_j^m(t,x_j)\, \nabla \phi(t,x_i)\,
      \dd x_1\dots \dd x_n \dd t \\
      &= \int_0^T \int_{\domain} \sqrt{w_j^m(x_j)} \nabla u_j^m(t,x_j) \psi^m(t,x_j)\,
      \dd x_i\, \dd t,
    \end{split}
  \end{equation*}
  where
  \begin{equation*}
    \psi^m(t,x_j) =
    \frac{1}{\sqrt{w_j^m(x_j)}}
    \prod_{k\not =j} \int_{\domain} \dd x_k\,
    K^m(x_1,\dots,x_n) u_i^m(t,x_i)\, \nabla \phi(t,x_i).
  \end{equation*}
  By the definition of the weight \(w_j^m\), we can apply Jensen's inequality
  for \(\tilde{p}\ge 2\) to find that
  \begin{equation*}
    \| \psi^m \|_{\fsL^{\tilde{p}}([0,T)\times \domain)}
    \le
    \| u_i^m \nabla \phi \|_{\fsL^{\tilde{p}}([0,T)\times \domain)}.
  \end{equation*}

  By the proven convergence and regularity of \(u_i^m\) we find that
  \(\psi^m \to u_i \nabla \phi\) a.e.\ in \([0,T) \times
  \domain\). The previous inequality gives a uniform bound of
  \(\psi^m\) in \(\fsL^{\tilde{p}}\) with \(\tilde{p}>2\) so that
  \(\psi^m\) converges strongly in \(\fsL^2\) to \(u_i \nabla \phi\). As
  \(\sqrt{w_j^m} \nabla u_j^m\) converges weakly in \(\fsL^2\) to
    \(\nabla u_j\), this proves the claimed convergence.

  The other terms in the weak formulation converge more directly in
  the limit and we thus have found a weak solution.
\end{proof}

\appendix

\section{Microscopic reversibility}
\label{sec:reversibility}

In the linear SKT model \eqref{eq:local-SKT}, the entropy was
understood as reversiblity in a microscopic model in
\cite{daus-desvillettes-dietert-2019-entropic-structure} and this gave
us the intuition about the nonlocal entropy structure. In this
appendix we discuss in the case of two species how the form in
\cref{rem:laplace-general-regularisation} in the general
regularisation on bounded domains by a kernel
\(K : \domain^2 \to \R_{\ge 0}\) appears formally from the microscopic
entropy structure.

In the microscopic picture of
\cite{daus-desvillettes-dietert-2019-entropic-structure} we considered
a spatial discretisation in the one-dimensional setting so that we
have discrete positions \(\{1,\dots,N\}\). On this discrete setting we
consider many particles of the two species \(1\) and \(2\) and we then
obtain a reversible cross-diffusion behaviour if a pair consisting of
a particle of species \(1\) at position \(i\) and a particle of
species \(2\) at position \(j\) jumps together with rate \(R_r(i,j)\)
to the positions \(i+1\) and \(j+1\), respectively. Likewise the pair
can jump with a rate \(R_l(i,j)\) to \(i-1\) and \(j-1\),
respectively. We then have the reversibility (and thus the entropy
structure) if
\begin{equation} \label{eq:microsopic-reversibility}
  R_r(i,j) = R_l(i+1,j+1).
\end{equation}

In the formal mean-field limit we then find the evolution for the
densities \(u_1\) and \(u_2\) the following nonlinear system
\begin{equation*}
  \left\{
    \begin{aligned}
      \partial_t u_1(i) &= \sum_{j=1}^M
      \Big\{
      R_r(i-1,j) u_1(i-1) u_2(j) + R_l(i+1,j) u_1(i+1) u_2(j) \\
      &\qquad\qquad
        - (R_l(i,j)+R_r(i,j)) u_1(i) u_2(j)
      \Big\} \\
      \partial_t u_2(j) &= \sum_{i=1}^M
      \Big\{
      R_r(i,j-1) u_1(i) u_2(j-1) + R_l(i,j+1) u_1(i) u_2(j+1) \\
      &\qquad\qquad
        - (R_l(i,j)+R_r(i,j)) u_1(i) u_2(j)
      \Big\}
    \end{aligned}
  \right.
\end{equation*}
for which we can indeed verify the entropy
\begin{equation*}
  H = \sum_{i=1}^M
  \Big[ h(u_1(i)) + h(u_2(i)) \Big]
\end{equation*}
where \(h'(x) = \log x\) as
\begin{equation*}
  \frac{\dd}{\dd t} H
  = - \sum_{i,j} R_r(i,j)
  \Big[
    \big(u_1(i+1)u_2(j+1) - u_1(i) u_2(j)\big)
    \big(\log(u_1(i+1) u_2(j+1)) - \log(u_1(i)u_2(j)) \big)
  \Big].
\end{equation*}

For the formal limit of the discrete system to a PDE, we denote the
centred discrete Laplacian
\begin{equation*}
  (\lap_d f)(i) = f(i+1) + f(i-1) - 2f(i).
\end{equation*}
We can then rewrite the evolution as
\begin{equation*}
  \begin{aligned}
    &\partial_t u_1(i)
    = \lap_d\left(\sum_j \frac{R_l(i,j) + R_r(i,j)}{2}
      u_1(i) u_2(j) \right) \\
    &+
    \frac 12
    \sum_j
    \left\{
      u_1(i{+}1) u_2(j)
      \Big[
      R_l(i{+}1,j) - R_r(i{+}1,j)
      \Big]
      + u_1(i{-}1) u_2(j)
      \Big[
      R_r(i{-}1,j) - R_r(i{+}1,j)
      \Big]
    \right\}
  \end{aligned}
\end{equation*}
and likewise for \(u_2\). By the microscopic reversibility
\eqref{eq:microsopic-reversibility} we note that this is exactly the
discrete form of the regularisation found in
\eqref{eq:regularised-skt-general-k}.

\section*{Acknowledgements}

We would like to thank Luca Alasio and Markus Schmidtchen for the
organisation of the workshop ``Recent Advances in Degenerate Parabolic
Systems with Applications to Mathematical Biology'' at Laboratoire
Jacques-Louis Lions (LJLL) in Paris in 2020, where the first
discussion on the project emerged.

\printbibliography

\end{document}